\documentclass[reqno]{amsart}

\makeatletter
	
	\@addtoreset{equation}{section}

	\@addtoreset{figure}{section}
\makeatother

\usepackage{longtable}
\usepackage{booktabs}
\usepackage{xcolor}
\usepackage{siunitx}
\usepackage{amsthm}
\usepackage{amsmath}
\usepackage{amsfonts}
\usepackage{graphicx}
\usepackage{subcaption}
\usepackage{url}
\usepackage{mathtools}
\usepackage{caption}
\usepackage{bm}
\usepackage{booktabs}
\usepackage{here}
\usepackage{enumerate}
\usepackage{float}
\usepackage{a4wide}
\usepackage{comment}
\usepackage{multirow}
\usepackage{hyperref}
\usepackage{thmtools}
\makeatletter
\renewcommand\thmt@autorefsetup{%
  \@xa\def
  \csname\thmt@envname autorefname\@xa\endcsname
  \@xa{\thmt@thmname}%
}
\makeatother
\hypersetup{
	colorlinks=false, 
	bookmarks=true, 
	bookmarksnumbered=true,
	urlbordercolor={0 1 1},
	bookmarkstype=toc
}

\title[]{On the volume conjecture of the colored Jones invariants with arbitrary colors}
\author[]{Shinichiro Kakuta}
\address{Department of Mathematics, School of Fundamental Science and Engineering, Faculty of Science and Engineering, Waseda University, 3-4-1 Ohkubo, Shinjuku-ku, Tokyo, 169-8555, Japan}
\email{s.kakuta@ruri.waseda.jp}
\keywords{Chen--Yang conjecture, colored Jones invariant, hyperbolic cone manifold, Poisson summation formula, potential function, volume conjecture.}
\subjclass[2020]{57K16, 57K10, 57K32}

\newtheorem{theorem}{Theorem}[section]
\newtheorem{proposition}[theorem]{Proposition}
\newtheorem{lemma}[theorem]{Lemma}

\newtheorem{conjecture}[theorem]{Conjecture}

\theoremstyle{definition}
\newtheorem{remark}[theorem]{Remark}

\allowdisplaybreaks

\begin{document}
\begin{abstract}
We study the volume conjecture of the colored Jones invariants with sequences of colors corresponding to the deformation of the hyperbolic structure of a link complement. In particular, we investigate certain limits of the colored Jones invariants of the figure-eight knot and the Borromean rings and show that the limits are related to the volumes of hyperbolic cone manifolds whose singular sets are the links.
\end{abstract}
\maketitle
\section{Introduction}
The volume conjecture \cite{Kashaev1997,Murakami-Murakami2001} predicts that the exponential growth rate of quantum invariants at roots of unity is related to the hyperbolic volume of a link complement. Its complexified form connects the real and imaginary parts of an appropriate limit with the hyperbolic volume and the Chern--Simons invariant (see \cite{MurakamiEtAl2002}). By varying the deformation parameter, it is conjectured that the quantum invariants detect the volume of the link complement whose hyperbolic structure is deformed. Indeed, relations of volume conjecture type between the colored Jones invariant at a different value and the volume of a hyperbolic cone manifold have already been discussed in \cite{MurakamiH2004} and \cite{Murakami-Yokota2007} (see also \cite{Gukov2005} and \cite{Gukov-Murakami2008}). Moreover, allowing not only the deformation parameter but also the colors to vary leads to a volume conjecture proposed by Chen--Yang \cite{Chen-Yang2018} and Murakami \cite{MurakamiJ2021} as follows (see also \cite{MurakamiJ2014} and \cite{MurakamiJ2018}).

Let $L$ be a hyperbolic link in $S^3$ with $\ell$ components $L_1,\ldots,L_\ell$, let $\alpha_1,\ldots,\alpha_\ell$ be constants in $[0,\pi]$, and let $M_{\alpha_1,\ldots,\alpha_\ell}(L)$ be the hyperbolic cone manifold with singular set $L$ whose cone angle around $L_i$ is $\alpha_i$. Note that we consider $\alpha_i$ only when $M_{\alpha_1,\ldots,\alpha_\ell}(L)$ is hyperbolic. Let $r$ be an odd integer greater than or equal to three and let $V_{j_1,\ldots,j_\ell}^{(r)}(L)$ be the colored Jones invariant of $L$ whose components $L_1,\ldots,L_\ell$ are colored by weights $j_1,\ldots,j_\ell$ in $\{j/2\,|\,j\in\mathbb{Z},0\leq j\leq r-2\}$ respectively, where the parameter is $q=\exp(4\pi\sqrt{-1}/r)$. Moreover, we put $r=2n+1$. Then the following may hold.
\begin{conjecture}[{\cite[Conjecture 4]{MurakamiJ2021}}]\label{mainconjecture}
For each $i=1,\ldots,\ell$, let $j_i$ be weights such that
\[\left|8\pi\lim_{n\to\infty}\frac{j_i}{2n+1}-2\pi\right|=\alpha_i,\]
then 
\[\lim_{n\to\infty}\frac{4\pi}{2n+1}\log|V_{j_1,\ldots,j_\ell}^{(2n+1)}(L)|=\mathrm{Vol}(M_{\alpha_1,\ldots,\alpha_\ell}(L)),\]
where $\mathrm{Vol}(M_{\alpha_1,\ldots,\alpha_\ell}(L))$ is the hyperbolic volume of $M_{\alpha_1,\ldots,\alpha_\ell}(L)$.
\end{conjecture}

In this paper, we investigate such a limit for the figure-eight knot $E$ and the Borromean rings $B$, and show that the limits coincide with the hyperbolic volumes under explicit assumptions. 

Let $\Phi_E(z)$ be the potential function of $V_{j}^{(r)}(E)$. If we consider the imaginary part of $\Phi_E(x)$ as a real-valued function on a certain interval, then it has the maximal value corresponding to the volume. Then we have the following.

\begin{restatable}{theorem}{mainthmF}\label{MainTheorem1}
Conjecture \ref{mainconjecture} is true for the figure-eight knot $E$ and for cone angles $\alpha$ such that $0\leq\alpha<\alpha_0$, where
\[\alpha_0=\sup\left\{\alpha\in\left[0,\frac{2\pi}{3}\right)\,\middle|\,2\,\mathrm{Im}\left(\Phi_E\left(\frac{\alpha}{2}\right)\right)<\mathrm{Vol}(M_\alpha(E))\right\}=1.7647826174\ldots.\]
\end{restatable}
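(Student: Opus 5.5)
The plan is to start from the explicit Habiro-type formula for the colored Jones invariant of the figure-eight knot at $q=\exp(4\pi\sqrt{-1}/r)$:
\[
V_j^{(r)}(E)=\sum_{k=0}^{r-2-2j}\prod_{m=1}^{k}\bigl(q^{(2j+1+m)/2}-q^{-(2j+1+m)/2}\bigr)\bigl(q^{(2j+1-m)/2}-q^{-(2j+1-m)/2}\bigr),
\]
or an equivalent form. I will write $j/(2n+1)\to\theta$, so that $\alpha=|8\pi\theta-2\pi|$, and by the symmetry $j\mapsto r-2-j$ reduce to one sign. Each summand will be expressed through the quantum dilogarithm, or through products of $|2\sin|$ factors. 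This shows that the $k$-th term has modulus $\exp\bigl(\frac{2n+1}{4\pi}\,2\,\mathrm{Im}\,\Phi_E(x)+O(\log n)\bigr)$ with $x$ proportional to $k/(2n+1)$, uniformly in $k$. Here $\Phi_E$ is the potential function mentioned before the theorem, built from Lobachevsky-type dilogarithm terms evaluated at $x\pm$ (a term depending on $\theta$).

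The first step is a lower bound. I will identify the critical point $x_0$ of $\mathrm{Im}\,\Phi_E$ on the relevant interval and check, via the Schläfli/Neumann--Zagier computation for the cone manifold $M_\alpha(E)$, that $2\,\mathrm{Im}\,\Phi_E(x_0)=\mathrm{Vol}(M_\alpha(E))$. The hyperbolic gluing equation with cone angle $\alpha$ is exactly $\partial\Phi_E/\partial z=0$, so this identification is standard. I will then use the Poisson summation formula to rewrite the sum as $\sum_{m}\int e^{\frac{r}{4\pi\sqrt{-1}}(\Phi_E(z)-2\pi m z)}\,dz$ plus a controlled error. The main term $m=0$ is evaluated by the saddle point method, shifting the contour through the complex critical point. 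Its magnitude is $\exp(\frac{r}{4\pi}\mathrm{Vol}(M_\alpha(E))+O(\log r))$ with a nonvanishing Hessian factor, and this forces the growth of $|V|$ to be at least the volume.

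The second step is to show that the remaining Fourier modes ($m\neq0$) and the boundary contributions are exponentially smaller. For this I will use the contour deformation to push $\mathrm{Im}$ of the exponent below $\mathrm{Vol}/2$, as in Chen--Yang/Ohtsuki-type estimates. The boundary of the summation region, where the real endpoint sits at $x=\alpha/2$, contributes roughly $\exp(\frac{r}{4\pi}\,2\,\mathrm{Im}\,\Phi_E(\alpha/2))$. This is exactly why the hypothesis $2\,\mathrm{Im}\,\Phi_E(\alpha/2)<\mathrm{Vol}(M_\alpha(E))$ appears, and it defines $\alpha_0$; for $\alpha<\alpha_0$ that contribution is negligible. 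The upper bound for $\frac{4\pi}{r}\log|V|$ then follows from the termwise bound $\max_x 2\,\mathrm{Im}\,\Phi_E(x)$ together with the fact that the sum has only polynomially many terms, once one checks that the maximum of the real function on the interval is attained at $x_0$.

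The main obstacle is the uniform control in the Poisson step. Specifically, one must find, for every $\alpha\in[0,\alpha_0)$, a deformed contour along which $\mathrm{Im}\,\Phi_E(z)-2\pi m\,\mathrm{Re}\,z$ stays strictly below the critical value for all $m\neq0$, with tails summable in $m$. I will first try explicit piecewise-linear contours together with convexity of $\mathrm{Im}\,\Phi_E$ in the imaginary direction. I expect to fall back on a numerical verification near $\alpha_0$, which is consistent with $\alpha_0=1.7647\ldots$ being a numerically determined threshold.
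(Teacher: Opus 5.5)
Your proposal has a genuine gap: the upper bound step is false on part of the range. You bound $|V_j^{(r)}(E)|$ termwise by $\max_x 2\,\mathrm{Im}\,\Phi_E(x)$, ``once one checks that the maximum is attained at $x_0$''. But for $\pi/3<\alpha<2\pi/3$ the function $2\,\mathrm{Im}\,\Phi_E(x)=-2\{\Lambda(x+\frac{\alpha}{2})+\Lambda(x-\frac{\alpha}{2})\}$ has a second local maximum at $x_1=\frac12\arccos(\cos\alpha+\frac12)\in(0,\frac{\alpha}{2})$. For the angle $\pi-\alpha$, the parameter $\theta_E$ equals $\frac{\pi}{2}-x_1$, so the formula of Proposition \ref{VolEformula} gives $2\,\mathrm{Im}\,\Phi_E(x_1)=\mathrm{Vol}(M_{\pi-\alpha}(E))$. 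Since volume decreases with cone angle, this strictly exceeds $\mathrm{Vol}(M_\alpha(E))$ as soon as $\alpha>\pi/2$. Because $\alpha_0\approx1.7648>\pi/2$, on $(\pi/2,\alpha_0)$ the largest summands grow strictly faster than the volume. The theorem holds there only because these summands cancel, and a termwise bound cannot see that. For $\alpha\le\pi/2$ your termwise bound is fine, so the failure sits exactly in the range for which the Fourier analysis is needed.

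The missing idea is the one the paper's proof is built on. The summands $A_k(E;j)$ are real, with ratio $R_k(E;j)=2\{\cos\frac{4\pi(2j+1)}{r}-\cos\frac{4\pi k}{r}\}$. Hence they alternate in sign for $2\pi k/r<\alpha/2$ (asymptotically) and have constant sign beyond. Analytically, $\mathrm{Re}\,\Phi_E'\equiv\pi$ on $(0,\frac{\alpha}{2})$ and $\mathrm{Re}\,\Phi_E'\equiv0$ on $(\frac{\alpha}{2},\pi-\frac{\alpha}{2})$, and $x=\frac{\alpha}{2}$ is a logarithmic branch point of $\Phi_E$ inside the summation range. So your sketch, which treats $\Phi_E$ as one analytic integrand over the whole range with a single saddle at $x_0$, must be split at $x=\frac{\alpha}{2}$ anyway.

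Once you split, the constant-sign block $I_2$, which contains $x_0$, needs no saddle-point or Hessian computation at all. The bounds $\max_{I_2}|A_k|\le|\sum_{I_2}A_k|\le\#I_2\max_{I_2}|A_k|$ already give growth rate exactly $\mathrm{Vol}(M_\alpha(E))$. The alternating block $I_1$, which contains $x_1$, only needs an upper bound. Insert a cutoff supported in $[\varepsilon,\frac{\alpha}{2}-\varepsilon]$ and apply Poisson summation. No mode has a real saddle there, so every Fourier coefficient, including $m=0$, is estimated by pushing the core of the contour off the real axis: into the lower half-plane for $m\ge0$, and into the upper half-plane for $m\le-1$. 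There the terms linear in $\mathrm{Im}\,z$ drive $\mathrm{Im}(\Phi_E(z)+2\pi mz)$ to $-\infty$. Only the cutoff transition regions near $0$ and $\frac{\alpha}{2}$ survive, at level $\max\{\mathrm{Im}\,\Phi_E(\frac{\alpha}{2}),0\}$. This is where $\alpha_0$ comes from, and no numerical contour verification is needed.

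Also note that $\mathrm{Im}(\Phi_E(z)+2\pi mz)=\mathrm{Im}\,\Phi_E(z)+2\pi m\,\mathrm{Im}\,z$, not $2\pi m\,\mathrm{Re}\,z$ as you wrote.
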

\begin{remark}
Conjecture \ref{mainconjecture} for the figure-eight knot is experimentally checked in \cite{MurakamiJ2021}.
\end{remark}
Cho and Murakami \cite{Cho-Murakami2009} showed that a suitable limit of the colored Alexander invariant which is similar to the colored Jones invariant detects the volume of the hyperbolic orbifold whose underlying space is the knot complement for the figure-eight knot for angles $\alpha=\frac{2\pi}{n}$ ($n\geq5$); these angles are contained in the range $0<\alpha\leq\frac{2\pi}{5}=1.2566370614\ldots$\,. Wong and Yang \cite{Wong-YangII2020} showed that a similar limit of the relative Reshetikhin-Turaev invariant which is proportional to the colored Jones invariant gives the volume of the hyperbolic cone manifold along the figure-eight knot under the condition $\mathrm{Vol}(M_\alpha(E))>\frac{\mathrm{Vol}(S^3\backslash E)}{2}$ which implies $0\leq\alpha<1.2002328742\ldots$\,. The result of Theorem \ref{MainTheorem1} strictly contains both previously known cone angle ranges.

Let $\Phi_B(z)$ be the potential function of $V_{j_1,j_2,j_3}^{(r)}(B)$. The imaginary part of $\Phi_B(x)$ has a distinguished extremal value equal to one half of the corresponding hyperbolic volume. The following is also one of our results.
\begin{restatable}{theorem}{mainthmS}\label{MainTheorem2}
Conjecture \ref{mainconjecture} is true for the Borromean rings $B$ and for cone angles $\alpha_1,\alpha_2,\alpha_3$ such that $(\alpha_1,\alpha_2,\alpha_3)$ belongs to $\Omega_0$, where
\[\Omega_0=\left\{(\alpha_1,\alpha_2,\alpha_3)\in[0,\pi)^3\,\middle|\,2\,\mathrm{Im}\left(\Phi_B\left(\frac{\min\{\alpha_1,\alpha_2,\alpha_3\}}{2}\right)\right)<\mathrm{Vol}(M_{\alpha_1,\alpha_2,\alpha_3}(B))\right\}.\]
\end{restatable}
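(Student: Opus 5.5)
We prove Theorem \ref{MainTheorem2} the same way as Theorem \ref{MainTheorem1}: write $V_{j_1,j_2,j_3}^{(2n+1)}(B)$ as a finite sum of quantum factorials, estimate it with the Poisson summation formula and the saddle point method, and match the dominant critical value of $\Phi_B$ with $\mathrm{Vol}(M_{\alpha_1,\alpha_2,\alpha_3}(B))$.

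\textbf{Step 1: reduction to a sum.} Since the statement only involves $|V|$ and $|8\pi j_i/(2n+1)-2\pi|$, we may use the symmetry $j_i\mapsto (r-2)/2-j_i$ of the colored Jones invariant at this root of unity. This lets us fix the sign so that $8\pi j_i/(2n+1)-2\pi\to-\alpha_i$. We then use the explicit formula for the Borromean rings, obtained for example from its description as a closure of a tangle or through the octahedral decomposition. It is a sum over $k$ in a range bounded by the colors, and each term is a product of ratios of quantum factorials, whose $q$-powers are expressed through $\alpha_1,\alpha_2,\alpha_3$.

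\textbf{Step 2: asymptotics of each term.} Write $\{m\}!$ through the quantum dilogarithm $\varphi_r$, using its standard asymptotics $\varphi_r(z)=\frac{r}{4\pi\sqrt{-1}}\mathrm{Li}_2(e^{2\sqrt{-1}z})+O(1/r)$, uniform on compact sets away from the endpoints. This shows that the term at index $k$ equals $\exp\bigl(\frac{r}{4\pi}\cdot2\sqrt{-1}\,\Phi_B(2\pi k/r)+O(1)\bigr)$, up to an overall phase and polynomial factors. The summation range in $x=2\pi k/r$ is an interval whose left end is tied to $\min\{\alpha_1,\alpha_2,\alpha_3\}/2$, because the factorials force $k$ to lie below the smallest color up to reflection.

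\textbf{Step 3: Poisson summation.} Smooth the sum with a bump function and apply Poisson summation. This turns it into $\sum_m\hat f(m)$, where
\[\hat f(m)=\int \exp\Bigl(\tfrac{r}{4\pi}\bigl(2\sqrt{-1}\,\Phi_B(x)-4\pi\sqrt{-1}\,mx\bigr)\Bigr)dx .\]
The contribution of the cutoff region near the endpoints is bounded using the real-line maximum of $\mathrm{Im}\,\Phi_B$. Since $\mathrm{Im}\,\Phi_B$ is monotone up to the boundary, this contribution is at most $\exp\bigl(\frac{r}{4\pi}(2\,\mathrm{Im}\,\Phi_B(\min\alpha_i/2)+\varepsilon)\bigr)$. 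This is exactly where the hypothesis defining $\Omega_0$ enters: it makes the boundary contribution exponentially smaller than the main term.

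\textbf{Step 4: saddle point.} For $m=0$, deform the contour into the upper half plane to pass through the critical point $z_0$ of $\Phi_B$. The equation $\Phi_B'(z_0)=0$ should coincide with the hyperbolicity (gluing) equation of the cone structure with angles $\alpha_i$. The Schläfli formula, or a direct comparison with the volume of the ideal octahedra deformed by the cone angles, then gives $2\,\mathrm{Im}\,\Phi_B(z_0)=\mathrm{Vol}(M_{\alpha_1,\alpha_2,\alpha_3}(B))$. This identifies the "distinguished extremal value" mentioned before the theorem. We also check that the Hessian is nonzero, so the saddle contribution does not vanish.

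\textbf{Step 5: the terms with $m\neq0$.} Show that for $m\neq0$ one has $\mathrm{Im}$ of the exponent $<\mathrm{Vol}-\varepsilon$ along suitably deformed contours. The convexity of $\mathrm{Im}\,\Phi_B$ along vertical lines pushes these contours to a region where $e^{\pm 4\pi mx}$-type factors dominate.

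Combining Steps 3–5 gives $\frac{4\pi}{r}\log|V|\to\mathrm{Vol}$. The main obstacle is Step 5 together with the contour deformation in Step 4 for a multivariate-looking but effectively one-variable potential. I would handle it by estimating the region $\{\mathrm{Im}(\cdots)<\mathrm{Vol}\}$ explicitly and checking that it contains a path between the endpoints, uniformly on compact subsets of $\Omega_0$.
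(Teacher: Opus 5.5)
There is a genuine gap, and it starts in Step 2. Write $\alpha_{\min}=\min_i\alpha_i$. The point $\alpha_{\min}/2$ is not an endpoint of the summation range.

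In Habiro's formula $k$ starts at $0$. The constraints $k\le 2j_i$ and $k\le r-2j_i-2$ cut the sum off only on the right, at $x=2\pi k/r\approx\pi-\max_i\alpha_i/2$. The point $x=\alpha_{\min}/2$ is interior: it is where the ratio $R_k=A_k/A_{k-1}$ first changes sign.

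On the block $[0,\alpha_{\min}/2]$ the terms alternate, and $\mathrm{Im}\,\Phi_B$ is not monotone there. It has an interior maximum near $\arctan T_B$ that can dwarf the volume. For instance, take $\alpha_1=\alpha_2=\alpha_3=2.8$, which lies in $\Omega_0$. Then $2\,\mathrm{Im}\,\Phi_B(0.5)\approx3.2$, while $\mathrm{Vol}\approx0.54$.

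So individual terms on this block are exponentially larger than $|V|$ itself, and the proof must show that they cancel. Nothing in your proposal does this. Your ``monotone up to the boundary'' bound simply drops this block.

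This cancellation is exactly what the paper uses Poisson summation for, and only there. It applies Poisson summation to a smooth cutoff of this block alone. On this block, in the paper's normalization, $\mathrm{Re}\,\Phi_B'=7\pi$ is an odd multiple of $\pi$, so no Fourier mode has a real critical point. The modes $m\le-4$ are pushed into the upper half plane and the modes $m\ge-3$ into the lower one. This gives $\bigl|\sum_m\widehat{h}(m)\bigr|=O\bigl(re^{\frac{r}{2\pi}U_B}\bigr)$ with $U_B<\frac12\mathrm{Vol}$.

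The hypothesis defining $\Omega_0$ is needed because some terms are \emph{not} cancelled. These are the terms outside the cutoff near $x=0$ and $x=\alpha_{\min}/2$, and those in the intermediate blocks $I_2,I_3$. They are only bounded by $e^{\frac{r}{2\pi}(\max\{\mathrm{Im}\,\Phi_B(\alpha_{\min}/2),0\}+o(1))}$.

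Note also that a single Poisson summation over the whole range, as in your Step 3, is not available. At $x=\alpha_i/2$ and $x=\pi/2$ the quantum-dilogarithm branches switch and $\mathrm{Re}\,\Phi_B'$ jumps by multiples of $\pi$. So the summand is not one smooth function across these points.

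Your Step 4 is aimed at the wrong object. The critical point carrying the volume is real, namely $x_0=\pi-\arctan T_A$. It lies in the block $I_4$, where all $A_k$ have the same sign. Hence $\max_{I_4}|A_k|\le\bigl|\sum_{I_4}A_k\bigr|\le\#I_4\max_{I_4}|A_k|$ gives the main term directly, with no contour deformation, Hessian, or steepest descent.

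Nor is the identification with the volume a matter of what ``should coincide'' with gluing equations. The limit of $R_k=1$ is exactly Mednykh's equation $T^4-(N_1^2+N_2^2+N_3^2+1)T^2-N_1^2N_2^2N_3^2=0$. The identity $\Delta(a,\pi-b)=-\Delta(a,b)$ then turns his formula into $\mathrm{Vol}=2\,\mathrm{Im}\,\Phi_B(\pi-\theta_B)$.

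The missing idea is to split the sum according to the sign pattern of $R_k$. Positivity handles the volume block. Analysis is needed only to prove cancellation on the alternating block $[0,\alpha_{\min}/2]$.
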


\begin{remark}
The region of the cone angles satisfying the conditions of Theorem \ref{MainTheorem2} is numerically visualised in Section \ref{SecB}.
\end{remark}

\begin{remark}
We checked Conjecture \ref{mainconjecture} for various larger cone angles not covered by our rigorous proofs in the case of $E$ and $B$ through a numerical experiment.
\end{remark}

Our proofs are based on the analysis of the potential functions associated with the colored Jones invariants for $E$ and $B$. Noting that their colored Jones invariants can be regarded as real-valued functions and that the asymptotic behavior of the summands contains the volume formulae for the hyperbolic cone manifolds, we separate the summation into a part where the signs of the summands are alternating and a part where they have the constant sign. On the one hand, the constant sign part of the summation has exponential growth rate bounded below by the hyperbolic volume. On the other hand, following and applying the analytic framework developed by Ohtsuki \cite{Ohtsuki2016}, we show that the sum of all the Fourier coefficients from the potential function is exponentially smaller than the main contribution, and then we partially rewrite the colored Jones invariants via the Poisson summation formula. This shows that the contribution from the main term dominates the asymptotics of the colored Jones invariant.

This paper is organized as follows. In Section \ref{SecE}, we treat the case of the figure-eight knot  $E$ and prove Theorem \ref{MainTheorem1}. We first recall the colored Jones invariant of $E$ and its potential function, and then give our proof of the volume conjecture for a certain range of the cone angle. In Section \ref{SecB}, we study the Borromean rings $B$ and prove Theorem \ref{MainTheorem2} in a manner similar to the case of $E$. The lemmata used in the proofs of the main results are collected in Section \ref{lemmaproofs}. 

\section*{Acknowledgements}
The author is grateful to Professor Jun Murakami for his helpful comments and encouragement.

\section{The case of the figure-eight knot}\label{SecE}
For the figure-eight knot, the colored Jones invariant and its potential function, along with the volume formula of the hyperbolic cone manifold, are straightforward and relatively simple to consider. In particular, most of the discussion can be conducted using elementary calculus. Furthermore, supplementing this with complex and Fourier analysis allows the results to be extended.

\subsection{The colored Jones invariant for $E$}\label{tcjiaipffE}
Let $s$ be the $r$-th root of unity $\exp(2\pi\sqrt{-1}/r)$. We use the following notations:
\[\{n\}=s^n-s^{-n},\quad \{n;k\}=\prod_{j=0}^{k-1}\{n-j\},\quad \{n\}!=\{n;n\}.\]
Note that the value of the deformation parameter $q$ of the colored Jones invariants we consider is $s$ squared.

The colored Jones invariants can be defined by several well-known methods and given in explicit formulae. It is known that the colored Jones invariant for the figure-eight knot $E$ is given by the formula of Habiro \cite{Habiro2000} and L\^e \cite{Le2003}
\begin{align*}
	V_j^{(r)}(E)&=\sum_{k=0}^{\min\{2j,r-(2j+1)-1\}}\frac{1}{\{1\}}\frac{\{2j+1+k\}!}{\{2j-k\}!}.
\end{align*}
Note that $\{r\}=0$ by the definition. Letting $(s^2)_n=(1-s^2)(1-s^4)\cdots(1-s^{2n})$, we obtain $\{n\}!=(-1)^ns^{-n(n+1)/2}(s^2)_n$, and then $V_j^{(r)}(E)$ has the following form:
\begin{align*}
	V_j^{(r)}(E)&=\sum_{k=0}^{k_{\max}}\frac{1}{\{1\}}(-1)^{2k+1}s^{-(2j+1)(2k+1)}\frac{(s^2)_{2j+1+k}}{(s^2)_{2j-k}},
\end{align*}
where $k_{\max}=\min\{2j,r-(2j+1)-1\}$. 

We now use the following asymptotic behaviors as $r\to\infty$. By the assumption in Conjecture \ref{mainconjecture}, we know that the ratio of weight $j$ and integer $r$ converges to a constant for each fixed $\alpha$: $\frac{j}{r}\sim\frac{1}{4}\pm\frac{\alpha}{8\pi}$ $(r\to\infty)$. Thus, the following approximations are valid. It is known that
\begin{equation}
(s^2)_n\sim\exp\left[\frac{r}{2\pi\sqrt{-1}}\left\{-\frac{1}{2}\mathrm{Li}_2(s^{2n})+\frac{\pi^2}{12}\right\}\right]\quad(r\to\infty),\label{dilogsim}
\end{equation}
where $\mathrm{Li}_2(w)$ is the dilogarithm function defined by the integral
\[\mathrm{Li}_2(w)=-\int_0^w\frac{\log(1-u)}{u}du\]
for $w$ between $0$ and $1$ with its analytic continuation to the complex plane cut along $[1,\infty)$ on the real axis. Note that possible values of $n$ in the form $(s^2)_n$ are $2j+1+k$ and $2j-k$. Thus, we have
\begin{align*}
	V_j^{(r)}(E)&\sim\sum_{k=0}^{k_{\max}}\frac{1}{\{1\}}\exp\left[\frac{r}{2\pi\sqrt{-1}}\left\{-2\pi\frac{2\pi k}{r}+4\frac{2\pi j}{r}\frac{2\pi k}{r}\right.\right.\\
	&\hspace{12em}\left.\left.-\frac{1}{2}\mathrm{Li}_2(e^{2\sqrt{-1}\frac{2\pi(2j+1+k)}{r}})+\frac{1}{2}\mathrm{Li}_2(e^{2\sqrt{-1}\frac{2\pi(2j-k)}{r}})\right\}\right]\quad(r\to\infty).
\end{align*}
We regard $\frac{2\pi k}{r}$ as the continuous real variable $x$ for a large $r$.

By extending $x$ to a complex variable $z$, the exponents with $\frac{r}{2\pi\sqrt{-1}}$ of the exponential functions in the calculated summands turn into
\[\Phi_E(\alpha;z)=-\frac{1}{2}\mathrm{Li}_2(e^{2\sqrt{-1}(\alpha/2+z)})+\frac{1}{2}\mathrm{Li}_2(e^{2\sqrt{-1}(\alpha/2-z)})+\alpha z.\] 
We call $\Phi_E(z)=\Phi_E(\alpha;z)$ the potential function of the colored Jones invariant $V_j^{(r)}(E)$ for $E$. Note that there is an ambiguity of the potential function coming from the choice of the power of $-1$ in the summation. Moreover, if the sign of the angle is changed, the formula
\begin{equation}
\mathrm{Li}_2\left(w^{-1}\right)=-\mathrm{Li}_2(w)-\frac{\pi^2}{6}-\frac{1}{2}\left\{\mathrm{Log}\,(-w)\right\}^2\quad(w\in\mathbb{C}\backslash[0,\infty))\label{inversion}
\end{equation}
shows that only the linear term is shifted, where $\mathrm{Log}$ denotes
\[\mathrm{Log}\,w=\log|w|+\sqrt{-1}\arg w,\quad-\pi<\arg w<\pi.\]

\begin{remark}
The potential function $\Phi_E(z)$ has the branch cuts along
\[\left\{\pm\pi n\mp\frac{\alpha}{2}\mp\sqrt{-1}R\,\middle|\,R\geq0\right\}\quad(n\in\mathbb{Z})\]
in the complex plane.
\end{remark}

Let $\Lambda: \mathbb{R}\to\mathbb{R}$ be the Lobachevsky function defined by
\[\Lambda(s)=-\int_0^s\log|2\sin t|dt.\]
Note that $\Lambda(s)$ is an odd and $\pi$-periodic function. If we consider the potential function $\Phi_E(z)$ for $z=x\in\mathbb{R}$, we get 
\begin{align}
	\mathrm{Im}(\Phi_E(x))&=-\Lambda\left(\frac{\alpha}{2}+x\right)+\Lambda\left(\frac{\alpha}{2}-x\right)=-\left\{\Lambda\left(x+\frac{\alpha}{2}\right)+\Lambda\left(x-\frac{\alpha}{2}\right)\right\}\label{impotE}
\end{align}
since $\Lambda(s)=\frac{1}{2}\mathrm{Im}(\mathrm{Li}_2(e^{2\sqrt{-1}s}))$ holds. 

We also consider the summands of the colored Jones invariant up to a factor of $\frac{1}{\{1\}}$. The colored Jones invariant $V_j^{(r)}(E)$ has the summand
\[A_k(E;j)=\frac{1}{\{1\}}\frac{\{2j+1+k\}!}{\{2j-k\}!}.\]
Let $R_k(E;j)$ be the ratio $A_k(E;j)/A_{k-1}(E;j)$ for $k\geq 1$, where $R_{0}(E;j)=A_0(E;j)$. We know
\[A_k(E;j)=\prod_{\nu=0}^kR_\nu(E;j)\]
holds and then $A_k(E;j)$ forms a real-valued sequence indexed by $k$. By focusing on the ratio $R_k(E;j)$, the sign of $A_k(E;j)$ is alternating whenever $R_k(E;j)<0$, and remains constant whenever $R_k(E;j)>0$. We compute the ratio as follows: 
\begin{align*}
	R_k(E;j)&=\{2j+1+k\}\{2j+1-k\}\\
	&=-4\sin\frac{2\pi(2j+1+k)}{r}\sin\frac{2\pi(2j+1-k)}{r}\\
	&=2\left\{\cos\frac{4\pi(2j+1)}{r}-\cos\frac{4\pi k}{r}\right\}.
\end{align*}
The ratio $R_k(E;j)$ is never equal to 0 since $0\leq k\leq \min\{2j,r-(2j+1)-1\}$. 

Now we consider the range of the summation of the colored Jones invariant for $E$. Define sets
\[I_1=\left\{0,1,\ldots,\left\lfloor\left|\frac{r}{2}-(2j+1)\right|\right\rfloor\right\},\quad I_2=\left\{\left\lfloor\left|\frac{r}{2}-(2j+1)\right|\right\rfloor+1,\ldots,k_{\max}\right\},\]
and $I=I_1\cup I_2$. Note that these sets depend on $r$.
\begin{lemma}\label{sgnchgE}
The sequence of $A_k(E;j)$ is alternating in $I_1$ and has constant sign in $I_2$. 
\end{lemma}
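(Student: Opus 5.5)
The proof reduces to determining the sign of each ratio $R_k(E;j)$ for $1\leq k\leq k_{\max}$. Since $A_k(E;j)=A_{k-1}(E;j)R_k(E;j)$, it suffices to show that $R_k(E;j)<0$ for $k\in I_1\setminus\{0\}$ and $R_k(E;j)>0$ for $k\in I_2$. The sign of $A_0(E;j)$ is irrelevant. We start from the expression $R_k(E;j)=2\{\cos\frac{4\pi(2j+1)}{r}-\cos\frac{4\pi k}{r}\}$ computed above.

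\textbf{Reduction to distances.} Put $m=2j+1\in\{1,\ldots,r-1\}$ and $d(t)=\min_{n\in\mathbb{Z}}|t-n\frac{r}{2}|$. The function $t\mapsto\cos\frac{4\pi t}{r}$ has period $r/2$, is even, and is strictly decreasing in $d(t)$ on $[0,r/4]$. Hence
\[
R_k(E;j)<0\iff d(k)<d(m),\qquad R_k(E;j)>0\iff d(k)>d(m).
\]
We already know that $R_k(E;j)\neq0$, so equality of distances never occurs. Since $\cos\frac{4\pi m}{r}$ is unchanged under $m\mapsto r-m$, and both $k_{\max}=\min\{m-1,r-m-1\}$ and $|\frac r2-m|$ are invariant under this substitution, we may assume $m<r/2$. (Equality $m=r/2$ is impossible because $r$ is odd.) In this case $k_{\max}=m-1$, $d(m)=\min\{m,\frac r2-m\}$, and $\lfloor|\frac r2-m|\rfloor=\lfloor\frac r2-m\rfloor$.

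\textbf{Case $k\in I_1$.} Here $k\leq\lfloor\frac r2-m\rfloor<\frac r2-m$, the last inequality being strict since $\frac r2-m\notin\mathbb{Z}$. We also have $k\leq m-1<m$. Hence $d(k)\leq k<d(m)$, so $R_k(E;j)<0$.

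\textbf{Case $k\in I_2$.} Here $k>\frac r2-m$ and $k\leq m-1$, so $0\leq k<\frac r2$ and $d(k)=\min\{k,\frac r2-k\}$. Both entries are larger than $\frac r2-m$: first, $k>\frac r2-m$; second, $\frac r2-k\geq\frac r2-m+1$. Therefore $d(k)>\frac r2-m\geq d(m)$, so $R_k(E;j)>0$.

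\textbf{Conclusion and main obstacle.} Multiplying the ratios shows that consecutive terms $A_{k-1}(E;j),A_k(E;j)$ have opposite signs throughout $I_1$ and equal signs throughout $I_2$, which is the statement of Lemma \ref{sgnchgE}. The only delicate points are the boundary cases. One must use that $r$ is odd to get strict inequalities (so that $\frac r2-m$ and $\frac r2$ are never integers), and one must apply the symmetry $m\mapsto r-m$ correctly so that the floor in the definition of $I_1$ matches $|\frac r2-(2j+1)|$ in both regimes $2j+1<r/2$ and $2j+1>r/2$.
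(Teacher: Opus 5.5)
Your proposal is correct and follows the same approach as the paper, which reads off the sign of $A_k(E;j)$ from the sign of the trigonometric expression for the ratio $R_k(E;j)$. Your argument supplies the details that the paper's one-line proof omits: the reduction $2j+1\mapsto r-(2j+1)$, the use of $r$ odd to get strict inequalities, and the implicit truncation of $I_1$ at $k_{\max}$ when $I_2$ is empty.
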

\begin{proof}
This follows immediately from the sign changes in the trigonometric factors of $R_k(E;j)$.
\end{proof}
We can separate the sum into the alternating part and the constant sign part:
\[V_{j}^{(r)}(E)=\sum_{k\in I_1}A_k(E;j)+\sum_{k\in I_2}A_k(E;j).\]

Taking $r\to\infty$ yields $2j\sim\frac{r}{2}\pm\frac{\alpha}{4\pi}r$ and the condition $\frac{r}{2}\leq 2j+1$ yields $k_{\max}=r-(2j+1)-1$. Then $\frac{2\pi}{r}k_{\max}$ asymptotically tends to $\pi-\frac{\alpha}{2}$ when $r\to\infty$. Moreover, it also follows from the condition $\frac{r}{2}>2j+1$ that $\frac{2\pi(2j)}{r}\sim\pi-\frac{\alpha}{2}$ $(r\to\infty)$. 

\begin{lemma}
Assume that $r$ is sufficiently large.
\begin{enumerate}[(i)]
\item If $0\leq\alpha\leq\frac{\pi}{3}$, then there exist integers $0\leq k_1\leq k_2\leq k_{\max}$ such that $|A_k(E;j)|$ is non-increasing for $0\leq k\leq k_1$, non-decreasing for $k_1\leq k\leq k_2$, and non-increasing for $k_2\leq k\leq k_{\max}$. In particular, $|A_k(E;j)|$ attains its minimal value at $k=k_1$ and its maximal value at $k=k_2$.
\item If $\frac{\pi}{3}<\alpha<\frac{2\pi}{3}$, then there exist integers $0\leq k_1'\leq k_2'\leq k_3'\leq k_{\max}$ such that $|A_k(E;j)|$ is non-decreasing on $[0,k_1']$, non-increasing on $[k_1',k_2']$, non-decreasing on $[k_2',k_3']$, and non-increasing on $[k_3',k_{\max}]$. In particular, $|A_k(E;j)|$ has two maximal values at $k=k_1'$ and $k=k_3'$ separated by the minimal value at $k=k_2'$.
\end{enumerate}
\end{lemma}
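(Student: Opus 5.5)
The plan is to read off the monotonicity of $|A_k(E;j)|$ from the size of the ratio. Since $A_k(E;j)=A_{k-1}(E;j)R_k(E;j)$, we have $|A_k|\geq|A_{k-1}|$ if and only if $|R_k(E;j)|\geq1$. We already have the exact formula
\[R_k(E;j)=2\left\{\cos\theta_r-\cos\frac{4\pi k}{r}\right\},\qquad \theta_r=\frac{4\pi(2j+1)}{r}.\]
Put $c_r=\cos\theta_r$ and $u_k=\cos\frac{4\pi k}{r}$. Then $|R_k|<1$ holds exactly when
\[c_r-\tfrac12<u_k<c_r+\tfrac12.\]
So the whole question becomes: for which $k$ does $u_k$ lie in the window $W_r=(c_r-\frac12,c_r+\frac12)$?

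\textbf{Step 1: shape of $k\mapsto u_k$.} From $2j/r\to\frac14\pm\frac{\alpha}{8\pi}$ we get $\theta_r\to\pi\pm\frac{\alpha}{2}\cdot2=2\pi\pm\alpha$ modulo $2\pi$, hence $c_r\to\cos\alpha$. The range of $\frac{2\pi k}{r}$ is $[0,\pi-\frac\alpha2+o(1)]$, so $\frac{4\pi k}{r}$ runs over $[0,2\pi-\alpha+o(1)]$. Hence $u_k$ decreases from $1$ to $-1$ for $k\leq r/4$. For $k\geq r/4$ it increases back, reaching at $k_{\max}$ the value $\cos\theta_r$ up to $O(1/r)$; indeed $4\pi k_{\max}/r=4\pi-\theta_r-O(1/r)$. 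Thus $u_k$ is unimodal (decreasing, then increasing), and the end value sits essentially at $c_r$, the centre of $W_r$. In particular $|R_{k_{\max}}|<1$, so the last stretch is always non-increasing.

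\textbf{Step 2: case (i), $0\leq\alpha\leq\pi/3$.} Here $\cos\alpha\geq\frac12$, so the upper edge $c_r+\frac12$ is $\geq1$ up to $o(1)$, while the lower edge $c_r-\frac12\in[0,\frac12]$ lies strictly inside $(-1,1)$. On the decreasing branch, $u_k$ starts in $W_r$ and leaves it once, through the lower edge; set $k_1$ to be this exit index. It then stays below $W_r$ through $u=-1$. On the increasing branch it re-enters $W_r$ once; set $k_2$ to be the re-entry index. It stays in $W_r$ until $k_{\max}$, because the end value is about $c_r<c_r+\frac12$. Consequently $|R_k|<1$ on $[1,k_1]$, $|R_k|\geq1$ on $(k_1,k_2]$, and $|R_k|<1$ on $(k_2,k_{\max}]$, which is the claimed decrease/increase/decrease pattern. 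The minimum at $k_1$ and the maximum at $k_2$ follow by comparing the endpoints of the runs.

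\textbf{Step 3: case (ii), $\pi/3<\alpha<2\pi/3$.} Now $\cos\alpha\in(-\frac12,\frac12)$, so both edges of $W_r$ lie strictly inside $(-1,1)$, uniformly for large $r$. The decreasing branch starts above $W_r$ (so $|R_k|>1$, increasing), enters $W_r$ at $k_1'$, and exits below at $k_2'$. The increasing branch re-enters $W_r$ at $k_3'$ and stays inside up to $k_{\max}$. This gives the four monotone pieces and the two maxima separated by the minimum.

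\textbf{Main obstacle.} The delicate point is the boundary value $\alpha=\pi/3$ in (i), where $c_r+\frac12$ may fall slightly below $1$ for finite $r$. In that situation a short initial increasing run could appear. I would handle this by computing $u_1=1-O(r^{-2})$ and comparing it with $c_r+\frac12$ using the precise rate $j/r-\frac14\mp\frac{\alpha}{8\pi}$. Alternatively, I would interpret the lemma via the convention $k_1\geq0$ and absorb an $O(1)$-length initial run, which does not affect the later asymptotic use. I would also check that transitions occur at exactly one index per crossing; this follows from the strict monotonicity of $u_k$ on each branch. With that settled, the rest is bookkeeping.
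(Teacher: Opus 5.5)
Your argument is correct for $\alpha\neq\frac{\pi}{3}$, and it takes a different route from the paper.

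The paper works at the exponential scale. From \eqref{logfact} it gets $\frac{4\pi}{r}\log|A_k(E;j)|\sim2\,\mathrm{Im}(\Phi_E(x))$ and locates the critical points of $2\,\mathrm{Im}(\Phi_E(x))$ by calculus. It then returns to the discrete sequence by asserting that the solutions of $|R_k(E;j)|=1$ converge to those points, and picking the larger (smaller) of two neighbouring values.

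You stay with the exact ratio and the window $W_r$. This is the discrete form of the same computation, since $\frac{d}{dx}\mathrm{Im}(\Phi_E(x))=\log|2(\cos\alpha-\cos 2x)|$. Your version gives the monotonicity pattern exactly for each large $r$, with one transition per crossing because $\cos\frac{4\pi k}{r}$ is strictly monotone on each branch. It also gives the limit positions directly: $\frac{2\pi k_1}{r},\frac{2\pi k_2'}{r}\to\theta_E$, $\frac{2\pi k_2}{r},\frac{2\pi k_3'}{r}\to\pi-\theta_E$, and $\frac{2\pi k_1'}{r}\to\frac12\arccos(\cos\alpha+\frac12)$. So it is sharper than the paper's transfer step.

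What the paper's route supplies and yours does not is the values: $\mp\frac12\mathrm{Vol}(M_\alpha(E))$ at the interior extrema, and $0$ and $\Lambda(\alpha)$ at $x=0$ and $x=\pi-\frac{\alpha}{2}$. If ``minimal/maximal value'' in (i) is meant globally, your ``comparing the endpoints of the runs'' must be filled in with exactly these values via \eqref{logfact}.

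Two harmless slips. It is $j/r$, not $2j/r$, that tends to $\frac14\pm\frac{\alpha}{8\pi}$. Also, $\frac{4\pi k_{\max}}{r}=4\pi-\theta_r-\frac{4\pi}{r}$ holds only when $k_{\max}=r-2j-2$; when $k_{\max}=2j$ it equals $\theta_r-\frac{4\pi}{r}$. In both cases $u_{k_{\max}}=c_r+O(1/r)$, so nothing changes.

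Your worry about $\alpha=\frac{\pi}{3}$ is justified, but neither repair works, because (i) is not literally true there. The hypothesis fixes only $\lim j/r$. Write $c_r=\cos(\frac{\pi}{3}+\delta_r)$ with $\delta_r\to0$. Since $r$ is odd, a parity check shows $\delta_r$ is a nonzero (odd) multiple of $\frac{\pi}{3r}$, and its sign is not determined by the hypothesis.

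Suppose $\delta_r>0$. Then $\frac12-c_r\geq\frac{\sqrt3}{2}\sin\delta_r$ is of order at least $1/r$. Hence it exceeds $1-\cos\frac{4\pi k}{r}$ for all $1\leq k\leq c\,r\sqrt{\delta_r}$, for a suitable constant $c>0$. On this range $|R_k(E;j)|>1$, so $|A_k(E;j)|$ increases over an initial run of length at least of order $\sqrt r$. Thus the precise rate cannot rescue (i), since it may point the wrong way, and the run is not $O(1)$.

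What does hold is that this run has length $O(r\sqrt{\delta_r})=o(r)$. Along it each $|R_k(E;j)|\leq 1+O(\delta_r)$, so $|A_k(E;j)|$ changes by a factor $e^{O(r\delta_r^{3/2})}=e^{o(r)}$. So at $\alpha=\frac{\pi}{3}$ you get pattern (i) for those $r$ with $c_r>\frac12$. For those $r$ with $c_r<\frac12$ you get pattern (ii) with $\frac{2\pi k_1'}{r}\to0$. The difference is invisible at the exponential scale used later.

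The paper's proof does not see this, because in the limit the critical point $\frac12\arccos(\cos\alpha+\frac12)$ merges with $0$ at $\alpha=\frac{\pi}{3}$. You should state the boundary case in this amended form instead of claiming (i) as written.
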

\begin{proof}
As $r\to\infty$, the solutions of $|R_k(E;j)|=1$ converge to the critical points of $2\,\mathrm{Im}(\Phi_E(x))$ listed below. Since it is known that
\begin{equation}
\log|\{n\}!|=-\frac{r}{2\pi}\Lambda\left(\frac{2\pi n}{r}\right)+O(\log r)\quad(r\to\infty),\label{logfact}
\end{equation}
we get
\begin{equation}
	\log|A_k(E;j)|=\frac{r}{2\pi}\left[-\left\{\Lambda\left(\frac{2\pi(k+2j+1)}{r}\right)+\Lambda\left(\frac{2\pi(k-2j)}{r}\right)\right\}\right]+O(\log r)\quad(r\to\infty),\notag\label{sumasymp}
\end{equation}
and so we have 
\begin{equation}
\frac{4\pi}{r}\log|A_k(E;j)|\sim2\,\mathrm{Im}(\Phi_E(x))\quad(r\to\infty).\label{coincidenceE}
\end{equation}
By considering the first and the second derivatives of $2\,\mathrm{Im}(\Phi_E(x))$, we have the following: if $0\leq\alpha\leq\frac{\pi}{3}$, then the function has the minimal value at $x=\frac{1}{2}\arccos\left(\cos\alpha-\frac{1}{2}\right)$ and the maximal value at $x=\pi-\frac{1}{2}\arccos\left(\cos\alpha-\frac{1}{2}\right)$; if $\frac{\pi}{3}<\alpha<\frac{2\pi}{3}$, then the function has the minimal value at $x=\frac{1}{2}\arccos\left(\cos\alpha-\frac{1}{2}\right)$ and the maximal values at $x=\frac{1}{2}\arccos\left(\cos\alpha+\frac{1}{2}\right)$ and $x=\pi-\frac{1}{2}\arccos\left(\cos\alpha-\frac{1}{2}\right)$. In particular, there exist two points around the extremum point which corresponds to $\pi-\frac{1}{2}\arccos\left(\cos\alpha-\frac{1}{2}\right)$ in the interval $\left[\frac{\alpha}{2},\pi-\frac{\alpha}{2}\right]$. For a maximal (resp. minimal) value of $2\,\mathrm{Im}(\Phi_E(x))$, we know that there exist two points of $|A_k(E;j)|$ around the point corresponding to the extremum point; we obtain the conclusion by choosing the larger (resp. smaller) one of the two values at the two points.
\end{proof}

\subsection{Proofs of Theorem \ref{MainTheorem1}}\label{PMT1}
In this section, we give a proof of the first main result as follows:

\mainthmF*
\noindent Note that the limit formula of Theorem \ref{MainTheorem1} from Conjecture \ref{mainconjecture} is
\[\lim_{\substack{r\to\infty\\r:\mathrm{odd}}}\frac{4\pi}{r}\log|V_{j}^{(r)}(E)|=\mathrm{Vol}(M_{\alpha}(E)).\]

\begin{lemma}\label{constantsign}
It holds that
\begin{equation}
\lim_{\substack{r\to\infty\\r:\mathrm{odd}}}\frac{4\pi}{r}\log\left|\sum_{k\in I_2}A_k(E;j)\right|=\lim_{\substack{r\to\infty\\r:\mathrm{odd}}}\frac{4\pi}{r}\log\max_{k\in I_2}|A_k(E;j)|.\label{cseq}
\end{equation}
\end{lemma}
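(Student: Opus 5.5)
The plan is to squeeze the sum over $I_2$ between its largest summand and a polynomial multiple of that summand. No cancellation can occur, because by Lemma \ref{sgnchgE} all the $A_k(E;j)$ with $k\in I_2$ have the same sign. Hence
\[
\left|\sum_{k\in I_2}A_k(E;j)\right|=\sum_{k\in I_2}|A_k(E;j)|.
\]
This gives the two-sided estimate
\[
\max_{k\in I_2}|A_k(E;j)|\leq\left|\sum_{k\in I_2}A_k(E;j)\right|\leq |I_2|\max_{k\in I_2}|A_k(E;j)|\leq r\max_{k\in I_2}|A_k(E;j)|,
\]
using that $|I_2|\leq k_{\max}+1\leq r$.

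Next I take logarithms and multiply by $\frac{4\pi}{r}$. The middle quantity then differs from $\frac{4\pi}{r}\log\max_{k\in I_2}|A_k(E;j)|$ by at most $\frac{4\pi}{r}\log r$, which tends to $0$. So the two limits in \eqref{cseq} exist together and coincide.

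The remaining point is that the right-hand limit actually exists, which is the only step needing care. Here I would use \eqref{logfact} and its consequence \eqref{coincidenceE}. Uniformly in $k$,
\[
\frac{4\pi}{r}\log|A_k(E;j)|=2\,\mathrm{Im}\bigl(\Phi_E\bigl(\tfrac{2\pi k}{r}\bigr)\bigr)+O\!\left(\frac{\log r}{r}\right),
\]
where the $\frac{1}{\{1\}}$ factor contributes only $O(\log r)$ to $\log|A_k(E;j)|$. The function $2\,\mathrm{Im}(\Phi_E(x))=-2\{\Lambda(x+\frac{\alpha}{2})+\Lambda(x-\frac{\alpha}{2})\}$ is continuous. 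Moreover, the rescaled index set $\frac{2\pi}{r}I_2$ converges in the Hausdorff sense to a fixed closed interval $J$: its left endpoint is $\frac{2\pi}{r}\lfloor|\frac r2-(2j+1)|\rfloor\to\frac{\alpha}{2}$ and its right endpoint is $\frac{2\pi}{r}k_{\max}\to\pi-\frac{\alpha}{2}$. The maximum of $2\,\mathrm{Im}(\Phi_E)$ over this grid therefore converges to $\max_{x\in J}2\,\mathrm{Im}(\Phi_E(x))$.

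The main obstacle is making the error term in \eqref{logfact} uniform in $n$ over $0\leq n\leq r-1$, including near the endpoints where the factorials degenerate. This is standard, since the error comes from Euler–Maclaurin applied to $\sum\log|2\sin(\pi m/r)|$. Away from the endpoints the errors are $O(\log r)$ uniformly, and near them the same bound holds by a direct estimate. Even without this uniformity, the sandwich above shows that the two sides of \eqref{cseq} have equal $\liminf$ and $\limsup$, which is all that later applications need.
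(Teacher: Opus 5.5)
Your argument is correct and is essentially the paper's proof. Both use the constant sign on $I_2$ (Lemma \ref{sgnchgE}) to get $\max_{k\in I_2}|A_k(E;j)|\leq\left|\sum_{k\in I_2}A_k(E;j)\right|\leq\#I_2\max_{k\in I_2}|A_k(E;j)|$, and then $\frac{4\pi}{r}\log\#I_2\to0$ because $\#I_2=O(r)$.

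Your extra paragraph on the existence of the right-hand limit covers what the paper leaves implicit; the paper instead relies on \eqref{coincidenceE} when deriving \eqref{ItwoVolE}. One small inaccuracy there: $j/r$ converges at an unspecified rate, so your uniform error is $o(1)$ rather than $O(\log r/r)$. This does not affect your conclusion.
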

\begin{proof}
We know
\[\left|\sum_{k\in I_2}A_k(E;j)\right|\leq\sum_{k\in I_2}|A_k(E;j)|\leq\#I_2\max_{k\in I_2}|A_k(E;j)|.\]
Since $A_k(E;j)$ has a constant sign in $I_2$, we have
\begin{equation}
\left|\sum_{k\in I_2}A_k(E;j)\right|=\sum_{k\in I_2}|A_k(E;j)|\geq\max_{k\in I_2}|A_k(E;j)|.\notag\label{secsum}
\end{equation}
Hence, we have
\begin{equation}
\max_{k\in I_2}|A_k(E;j)|\leq\left|\sum_{k\in I_2}A_k(E;j)\right|\leq\#I_2\max_{k\in I_2}|A_k(E;j)|.\label{SqueezeE}
\end{equation}
Thus, from \eqref{SqueezeE}, since $\#I_2=O(r)$ $(r\to\infty)$, we obtain \eqref{cseq}.
\end{proof}

Next, we briefly prepare the geometric concepts required to see that the leading term corresponds to the hyperbolic volume. A 3-dimensional cone-manifold is a Riemannian 3-dimensional manifold of constant sectional curvature with cone-type singular set along simple closed geodesics, and it is modeled in hyperbolic, spherical, or Euclidean structure depending on the curvature. For the conjecture, we consider the hyperbolic volumes of 3-dimensional hyperbolic cone-manifolds along hyperbolic links. 
Note that it is known that they are given by the imaginary parts of the potential functions of the colored Jones invariants for hyperbolic links evaluated at the suitable saddle point (see \cite{Sawabe2023}).

\begin{proposition}[{\cite[Theorem 6.3 (ii)]{Abrosimov-Mednykh2021}}]\label{VolEformula}
The hyperbolic volume of $M_\alpha(E)$ is given by the formula 
\begin{equation}
\mathrm{Vol}(M_\alpha(E))=2\left\{\Lambda\left(\theta_E+\frac{\alpha}{2}\right)+\Lambda\left(\theta_E-\frac{\alpha}{2}\right)\right\},\label{VolE}
\end{equation}
where $\theta_E=\frac{1}{2}\arccos\left(\cos\alpha-\frac{1}{2}\right)$. 
\end{proposition}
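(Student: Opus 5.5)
The plan is to prove \eqref{VolE} with the Schläfli formula. We check that the right-hand side
\[
f(\alpha)=2\left\{\Lambda\left(\theta_E+\frac{\alpha}{2}\right)+\Lambda\left(\theta_E-\frac{\alpha}{2}\right)\right\}
\]
has the correct value at $\alpha=0$ and the correct derivative in $\alpha$ on the hyperbolic range $0\le\alpha<\frac{2\pi}{3}$. By Kojima's rigidity theorem for cone manifolds with angles at most $\pi$, the cone structures $M_\alpha(E)$ form a smooth one-parameter family for $\alpha\in[0,\frac{2\pi}{3})$. The Schläfli formula then gives
\[
\frac{d}{d\alpha}\mathrm{Vol}(M_\alpha(E))=-\frac{1}{2}\ell(\alpha),
\]
where $\ell(\alpha)$ is the length of the singular geodesic. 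So it suffices to prove three things: $f(0)=\mathrm{Vol}(S^3\backslash E)$, $f$ is differentiable, and $f'(\alpha)=-\frac12\ell(\alpha)$.

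\emph{Initial value.} At $\alpha=0$ we have $\theta_E=\frac12\arccos\frac12=\frac{\pi}{6}$, so $f(0)=4\Lambda(\frac{\pi}{6})$. The duplication identity $\Lambda(2x)=2\Lambda(x)+2\Lambda(x+\frac{\pi}{2})$ gives $4\Lambda(\frac{\pi}{6})=6\Lambda(\frac{\pi}{3})=2.0298\ldots$, which is the volume of the figure-eight complement. As a consistency check, at $\alpha=\frac{2\pi}{3}$ we get $\theta_E=\frac{\pi}{2}$ and $f=2\{\Lambda(\frac{5\pi}{6})+\Lambda(\frac{\pi}{6})\}=0$, by oddness and $\pi$-periodicity of $\Lambda$. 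This matches the Euclidean degeneration.

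\emph{Derivative of $f$.} The key observation is that $\theta_E$ is a critical point of $\theta\mapsto\Lambda(\theta+\frac{\alpha}{2})+\Lambda(\theta-\frac{\alpha}{2})$. Indeed, the critical-point equation is $\log|2\sin(\theta+\frac{\alpha}{2})|+\log|2\sin(\theta-\frac{\alpha}{2})|=0$. This says $4|\sin(\theta+\frac{\alpha}{2})\sin(\theta-\frac{\alpha}{2})|=1$, i.e. $2|\cos\alpha-\cos2\theta|=1$, which $\cos2\theta_E=\cos\alpha-\frac12$ satisfies; this is the same critical point of $2\,\mathrm{Im}(\Phi_E(x))$ found in the previous lemma. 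Hence the dependence of $\theta_E$ on $\alpha$ drops out when differentiating, and we get
\[
f'(\alpha)=-\log\left|\frac{\sin(\theta_E+\alpha/2)}{\sin(\theta_E-\alpha/2)}\right|.
\]

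\emph{Length of the singular geodesic.} It remains to show
\[
\ell(\alpha)=2\log\left|\frac{\sin(\theta_E+\alpha/2)}{\sin(\theta_E-\alpha/2)}\right|.
\]
For this I would take Riley's parabolic-type representation of $\pi_1(S^3\backslash E)$ into $SL(2,\mathbb{C})$, deformed so that the meridian has eigenvalue $M=e^{\sqrt{-1}\alpha/2}$. I would solve the Riley polynomial for the deformation parameter and compute the longitude eigenvalue $L$; equivalently, I would solve the A-polynomial
\[
M^4-(M^{-2}-M^{-1}\cdots)\ \text{relation}\ L+\dots=0
\]
of the figure-eight knot with $|M|=1$. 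One should find that $L$ is real, and $\ell(\alpha)=2\log|L|$, up to the normalisation convention for $\ell$. Rewriting $L$ in terms of $\theta_E$ via $\cos2\theta_E=\cos\alpha-\frac12$ should produce the ratio of sines above. For example, $\frac{\sin(\theta_E+\alpha/2)}{\sin(\theta_E-\alpha/2)}$ should be one of the two roots $L^{\pm1}$.

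I expect this last identification to be the main obstacle. It needs the correct branch of the solution, namely the one continuing the complete structure, and the correct sign and normalisation of $\ell$. It also needs a check that $\theta_E-\frac{\alpha}{2}>0$ on the whole range, so that no absolute-value sign flips occur. With these three ingredients, both sides satisfy the same ODE with the same initial value, and integrating from $0$ to $\alpha$ yields \eqref{VolE}.
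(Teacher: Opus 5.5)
The paper does not prove this proposition; it quotes it from Abrosimov--Mednykh. Your route (Schl\"afli formula plus the length of the singular geodesic) is the standard one behind that formula. Your checks are correct: $f(0)=4\Lambda(\frac{\pi}{6})=6\Lambda(\frac{\pi}{3})$, $f(\frac{2\pi}{3})=0$, and $f'(\alpha)=-\log X$ with $X=\sin(\theta_E+\frac{\alpha}{2})/\sin(\theta_E-\frac{\alpha}{2})$, using that $\theta_E$ is a critical point.

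\textbf{The gap.} The step that carries all the geometry is left unproved. The identity $\ell(\alpha)=2\log X$ is only asserted (``should''), and the A-polynomial you write down is not an actual equation. As it stands, the argument only reduces the proposition to an unverified claim.

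\textbf{How to close it.} The nonabelian A-polynomial of $E$ is $L+L^{-1}=M^4+M^{-4}-M^2-M^{-2}-2$. It is irreducible, so no branch choice arises, and at $M=1$ it gives $L=-1$ and cusp shape $2\sqrt{-3}$. The holonomy of $M_\alpha(E)$ has $M=e^{\sqrt{-1}\alpha/2}$, so $L$ is real and negative, $L=-e^{\pm\ell/2}$, and
\[
L+L^{-1}=-2\left(1+\cos\alpha-\cos2\alpha\right),\qquad\text{i.e.}\qquad\cosh\frac{\ell(\alpha)}{2}=1+\cos\alpha-\cos2\alpha .
\]
On the other side, $\sin(\theta_E+\frac{\alpha}{2})\sin(\theta_E-\frac{\alpha}{2})=\frac12(\cos\alpha-\cos2\theta_E)=\frac14$ and $\sin^2(\theta_E+\frac{\alpha}{2})+\sin^2(\theta_E-\frac{\alpha}{2})=1-\cos\alpha\cos2\theta_E$. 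Hence
\[
X+X^{-1}=4\left(1-\cos\alpha\cos2\theta_E\right)=4+2\cos\alpha-4\cos^2\alpha=2\left(1+\cos\alpha-\cos2\alpha\right),
\]
so $X=e^{\pm\ell/2}$. Since $\cos2\theta_E<\cos\alpha$, we have $\frac{\alpha}{2}<\theta_E<\frac{\pi}{2}$. Both sines are then positive and $X\ge1$, so $\ell=2\log X$. This also settles your worries about branch, sign and absolute values. Equivalently, you recover the known formula $\mathrm{Vol}(M_\alpha(E))=\int_\alpha^{2\pi/3}\mathrm{arccosh}(1+\cos t-\cos2t)\,dt$.

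\textbf{A minor attribution point.} Kojima's theorem gives uniqueness of the structures for angles at most $\pi$. The smooth dependence on $\alpha$ that Schl\"afli needs comes from Hodgson--Kerckhoff local rigidity. Existence of the structures on $[0,\frac{2\pi}{3})$ and continuity of the volume at $\alpha=0$ are separate known facts you should cite.
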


\begin{remark}
The cone manifold $M_\alpha(E)$ is hyperbolic when $0\leq\alpha<\frac{2\pi}{3}$.
\end{remark}

From \eqref{coincidenceE}, \eqref{cseq}, and \eqref{VolE}, we know
\begin{equation}
\lim_{\substack{r\to\infty\\r:\mathrm{odd}}}\frac{4\pi}{r}\log\left|\sum_{k\in I_2}A_k(E;j)\right|=\mathrm{Vol}(M_\alpha(E)).\label{ItwoVolE}
\end{equation}

We mainly discuss the case where $0<\alpha<\alpha_0$. The case $\alpha=0$ follows similarly by degeneration (see also \cite{DKY2018}). To prove Theorem \ref{MainTheorem1}, we consider the evaluation of the absolute value of the alternating summation $\left|\sum_{k\in I_1}A_k(E;j)\right|$. Let $x_0$ be the point corresponding to the hyperbolic volume, namely, we put $x_0=\pi-\frac{1}{2}\arccos\left(\cos\alpha-\frac{1}{2}\right)$. We suppose that $\mathrm{Im}\left(\Phi_E\left(\frac{\alpha}{2}\right)\right)<\mathrm{Im}(\Phi_E(x_0))=\frac{1}{2}\mathrm{Vol}(M_\alpha(E))$ holds. Choose $U_E$ so that $\max\left\{\mathrm{Im}\left(\Phi_E\left(\frac{\alpha}{2}\right)\right),0\right\}<U_E<\mathrm{Im}(\Phi_E(x_0))$. We take the interval $I(\alpha)=\left[0,\frac{\alpha}{2}\right]$ and its subintervals $I'(\alpha)=\left[\varepsilon,\frac{\alpha}{2}-\varepsilon\right]$ and $I''(\alpha)=\left[2\varepsilon,\frac{\alpha}{2}-2\varepsilon\right]$ for a small $\varepsilon>0$ such that $\mathrm{Im}(\Phi_E(x))<U_E$ for every $x\in I(\alpha)\backslash I''(\alpha)$. Let $g_\alpha$ be a smooth function on $\mathbb{R}$ such that $g_\alpha(t)=0$ if $t$ is outside $I'(\alpha)$ and $g_\alpha(t)=1$ if $t$ is in $I''(\alpha)$. 

Now we define a holomorphic function $\varphi_r(z)$ on $\left\{z\in\mathbb{C}\,\middle|\,-\frac{\pi}{r}<\mathrm{Re}(z)<\pi+\frac{\pi}{r}\right\}$ by
\[\varphi_r(z)=\int_{-\infty}^\infty\frac{e^{(2z-\pi)x}}{4x\sinh(\pi x)\sinh (2\pi x/r)}dx,\]
where the above integrand has poles on the imaginary axis 
and we choose the path of the integral
\[(-\infty,-c]\cup\{z\in\mathbb{C}\,|\,|z|=c,\,\mathrm{Im}(z)\geq0\}\cup[c,\infty)\]
for some $c\in(0,1)$ to avoid the pole at $0$. Note that $\varphi_r(z)$ is called the quantum dilogarithm function. It is known that the formula 
\[(s^2)_n=\left\{\begin{array}{l l}\displaystyle\exp\left[\varphi_r\left(\frac{\pi}{r}\right)-\varphi_r\left(\frac{2\pi n}{r}+\frac{\pi}{r}\right)\right]&\displaystyle\left(0\leq n\leq\frac{r-1}{2}\right),\vspace{0.25em}\\\displaystyle\exp\left[\varphi_r\left(\frac{\pi}{r}\right)-\varphi_r\left(\frac{2\pi n}{r}+\frac{\pi}{r}-\pi\right)+\log 2\right]&\displaystyle\left(\frac{r-1}{2}<n<\frac{2r-1}{2}\right)\end{array}\right.\]
holds (see \cite{Chen-Murakami2023}). We rewrite the summation as
\begin{align*}
\sum_{k\in I_1}A_k(E;j)
&=\sum_{k\in I_1}\frac{1}{\{1\}}\exp\left[\frac{r}{2\pi\sqrt{-1}}\left\{-2\pi\frac{2\pi k}{r}+4\frac{2\pi j}{r}\frac{2\pi k}{r}+2\frac{2\pi}{r}\frac{2\pi j}{r}+2\frac{2\pi}{r}\frac{2\pi k}{r}+\frac{2\pi^2}{r}\right.\right.\\
&\left.\left.\hspace{12em}-\frac{2\pi\sqrt{-1}}{r}\varphi_r\left(\frac{2\pi(2j+1+k)}{r}+\frac{\pi}{r}-b\pi\right)\right.\right.\\
&\left.\left.\hspace{15em}+\frac{2\pi\sqrt{-1}}{r}\varphi_r\left(\frac{2\pi(2j-k)}{r}+\frac{\pi}{r}-b\pi\right)\right\}\right],
\end{align*}
where $b$ is either 0 or 1, depending on the range of the indices of the Pochhammer symbol. It is also known that $\frac{2\pi\sqrt{-1}}{r}\varphi_r(z)$ uniformly converges to $\frac{1}{2}\mathrm{Li}_2(e^{2\sqrt{-1}z})$ in the domain $\{z\in\mathbb{C}\,|\,d\leq\mathrm{Re}(z)\leq \pi-d,|\mathrm{Im}(z)|\leq R\}$ for any sufficiently small $d>0$ and any $R>0$. We put 
\begin{align*}
\Phi_E^{r}\left(\frac{2\pi k}{r}\right)&=-2\pi\frac{2\pi k}{r}+4\frac{2\pi j}{r}\frac{2\pi k}{r}+2\frac{2\pi}{r}\frac{2\pi j}{r}+2\frac{2\pi}{r}\frac{2\pi k}{r}+\frac{2\pi^2}{r}\\
&\hspace{2em}-\frac{2\pi\sqrt{-1}}{r}\varphi_r\left(\frac{2\pi(2j+1+k)}{r}+\frac{\pi}{r}-b\pi\right)+\frac{2\pi\sqrt{-1}}{r}\varphi_r\left(\frac{2\pi(2j-k)}{r}+\frac{\pi}{r}-b\pi\right).
\end{align*}
Moreover, for a sufficiently large $r$, we regard the sequence of the functions as 
\[\Phi_E^{r}(z)=-\frac{2\pi\sqrt{-1}}{r}\varphi_r\left((1-b)\pi+\frac{\alpha}{2}+z+\frac{3\pi}{r}\right)+\frac{2\pi\sqrt{-1}}{r}\varphi_r\left((1-b)\pi+\frac{\alpha}{2}-z+\frac{\pi}{r}\right)+\alpha z.\]
We define the function $h_{\alpha,r}(x)$ to be the product $g_\alpha\left(\frac{2\pi x}{r}\right)\exp\left[\frac{r}{2\pi\sqrt{-1}}\Phi_E^r\left(\frac{2\pi x}{r}\right)\right]$, and we consider Fourier coefficients
\[\widehat{h_{\alpha,r}}(m)=\int_\mathbb{R}h_{\alpha,r}(x)e^{-2\pi mx\sqrt{-1}}dx\]
to use the Poisson summation formula. Recall that a rapidly decreasing function $h(x)$ satisfies the Poisson summation formula
\[\sum_{m\in\mathbb{Z}}h(m)=\sum_{m\in\mathbb{Z}}\widehat{h}(m).\]

\begin{lemma}\label{PoiE}
The function $h_{\alpha,r}(x)$ is a rapidly decreasing function. Therefore, it holds that
\[\sum_{m\in\mathbb{Z}}h_{\alpha,r}(m)=\sum_{m\in\mathbb{Z}}\widehat{h_{\alpha,r}}(m).\]
\end{lemma}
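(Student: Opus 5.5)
The point is that $h_{\alpha,r}$ is smooth with compact support. Compactly supported smooth functions lie in the Schwartz class, so the Poisson summation formula applies to them directly.

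\textbf{Compact support.} By construction $g_\alpha$ vanishes outside $I'(\alpha)=\left[\varepsilon,\frac{\alpha}{2}-\varepsilon\right]$. Hence $h_{\alpha,r}(x)=0$ unless $\frac{2\pi x}{r}\in I'(\alpha)$, that is, unless $x\in\left[\frac{r\varepsilon}{2\pi},\frac{r}{2\pi}\left(\frac{\alpha}{2}-\varepsilon\right)\right]$. For fixed $r$ this is a compact interval.

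\textbf{Smoothness on a neighbourhood of the support.} It remains to show that $x\mapsto\exp\left[\frac{r}{2\pi\sqrt{-1}}\Phi_E^r\left(\frac{2\pi x}{r}\right)\right]$ is $C^\infty$ there. It suffices that $\Phi_E^r(z)$ is smooth for real $z$ in a neighbourhood of $I'(\alpha)$.

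The linear term $\alpha z$ is clearly smooth. The other two terms are $\varphi_r$ evaluated at
\[(1-b)\pi+\frac{\alpha}{2}+z+\frac{3\pi}{r}\quad\text{and}\quad(1-b)\pi+\frac{\alpha}{2}-z+\frac{\pi}{r}.\]
Recall that $\varphi_r$ is holomorphic on the strip $-\frac{\pi}{r}<\mathrm{Re}<\pi+\frac{\pi}{r}$. For $z$ near $I'(\alpha)$, with $0<\alpha<\frac{2\pi}{3}$ and $b$ chosen according to the Pochhammer index range, the real parts of these arguments are bounded away from $0$ and $\pi$. More precisely, $\frac{\alpha}{2}+z$ lies in $(\varepsilon,\alpha-\varepsilon)$ and $\frac{\alpha}{2}-z$ lies in $(\varepsilon,\frac{\alpha}{2})$, up to the shift by $(1-b)\pi$. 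After reducing modulo $\pi$ via the appropriate choice of $b$, both arguments therefore stay inside the strip for large $r$.

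If the choice of $b$ happened to switch inside the support, I would instead split the support at the switching point. Alternatively I would note that $I'(\alpha)$ sits well within a single regime, since the index $2j+1+k$ versus $\frac{r-1}{2}$ does not cross there for $\varepsilon$ fixed; this is the only point needing care. Within a single regime the composition with a holomorphic function is real-analytic, and multiplying by the smooth $g_\alpha\left(\frac{2\pi x}{r}\right)$ keeps it smooth.

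\textbf{Conclusion.} A $C^\infty$ function with compact support has all derivatives bounded and vanishing outside a compact set. Hence $\sup_x|x^a h_{\alpha,r}^{(b)}(x)|<\infty$ for all $a,b$, so $h_{\alpha,r}$ is rapidly decreasing. The Poisson summation formula recalled above then gives $\sum_{m}h_{\alpha,r}(m)=\sum_m\widehat{h_{\alpha,r}}(m)$. Moreover, integrating by parts shows $\widehat{h_{\alpha,r}}(m)=O(|m|^{-N})$ for every $N$, so both sides converge absolutely.

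The main obstacle is the bookkeeping in the smoothness step. One must verify that the arguments of $\varphi_r$ remain in its holomorphy strip and away from the poles and branch issues throughout the support, uniformly in the relevant $r$. Everything else is standard Schwartz-class theory.
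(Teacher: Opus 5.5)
Your argument is correct and is the justification the paper leaves implicit, since Lemma~\ref{PoiE} is stated there without proof. The cutoff $g_\alpha$ confines the support to $\frac{r}{2\pi}I'(\alpha)$, and the arguments of $\varphi_r$ stay inside the strip $-\frac{\pi}{r}<\mathrm{Re}<\pi+\frac{\pi}{r}$ there, so $h_{\alpha,r}$ is smooth with compact support, hence rapidly decreasing, and Poisson summation applies. One correction: the regime switch across $\frac{r-1}{2}$ is in the index $2j-k$ when $2j+1\geq\frac{r}{2}$, and in $2j+1+k$ only when $2j+1<\frac{r}{2}$; in either case it occurs near $\frac{2\pi k}{r}\approx\frac{\alpha}{2}$, which $I'(\alpha)$ avoids by $\varepsilon$, so your splitting fallback is never needed (it would not yield a smooth function anyway).
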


Here we prove the following lemma.

\begin{lemma}\label{evaE}
There exists a constant $M>0$ such that
\[\left|\sum_{m\in\mathbb{Z}}\widehat{h_{\alpha,r}}(m)\right|\leq Mre^{\frac{r}{2\pi}U_E}\]
for a sufficiently large $r$.
\end{lemma}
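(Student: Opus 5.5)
The plan is the standard Ohtsuki-type estimate of Fourier coefficients by contour deformation. First change variables $x\mapsto \frac{r}{2\pi}t$, so that
\[\widehat{h_{\alpha,r}}(m)=\frac{r}{2\pi}\int_{I'(\alpha)}g_\alpha(t)\exp\left[\frac{r}{2\pi\sqrt{-1}}\left(\Phi_E^r(t)-2\pi m t\right)\right]dt .\]
The exponent is $\frac{r}{2\pi}\,\mathrm{Im}\bigl(\Phi_E^r(t)-2\pi mt\bigr)$, up to sign conventions matching \eqref{impotE}. Since $\frac{2\pi\sqrt{-1}}{r}\varphi_r$ converges uniformly to $\frac12\mathrm{Li}_2(e^{2\sqrt{-1}z})$ on compact sets avoiding the branch points, we get $\Phi_E^r(z)=\Phi_E(z)+O(1/r)$ uniformly on a compact neighbourhood $D$ of $I'(\alpha)$ in the strip. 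The arguments $\frac{\alpha}{2}\pm z$ stay in $(d,\pi-d)$ after the shift by $(1-b)\pi$, because $\varepsilon\le t\le\frac{\alpha}{2}-\varepsilon$.

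\textbf{The term $m=0$.} On $I''(\alpha)$ the integrand has modulus at most $\exp\bigl[\frac{r}{2\pi}(\mathrm{Im}\Phi_E(t)+O(1/r))\bigr]$. On $I(\alpha)$ the function $\mathrm{Im}\Phi_E(x)=-\{\Lambda(x+\frac{\alpha}{2})+\Lambda(x-\frac{\alpha}{2})\}$ is monotone: its derivative $-\log|4\sin(x+\frac{\alpha}{2})\sin(x-\frac{\alpha}{2})|$ has constant sign between the critical points computed in the previous lemma, and the minimum lies at $\theta_E\ge\frac{\alpha}{2}$. Hence $\mathrm{Im}\Phi_E\le\max\{\mathrm{Im}\Phi_E(0),\mathrm{Im}\Phi_E(\frac{\alpha}{2})\}=\max\{0,\mathrm{Im}\Phi_E(\frac{\alpha}{2})\}<U_E$ on $I(\alpha)$. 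On the collar $I'(\alpha)\setminus I''(\alpha)$ the bound $<U_E$ holds by the choice of $\varepsilon$. So $|\widehat{h_{\alpha,r}}(0)|\le C r e^{\frac{r}{2\pi}U_E}$.

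\textbf{The terms $m\neq0$.} Shift the contour from $I'(\alpha)$ to $t\mapsto t+\sqrt{-1}\,\delta\,\mathrm{sgn}(m)\,\eta(t)$, where $\eta$ is a bump supported where $g_\alpha$ is constant, or simply replace $g_\alpha$ by its holomorphic-in-a-strip treatment. In practice it is easier to follow Ohtsuki and integrate by parts instead. Shifting $t$ by $\sqrt{-1}y$ changes $\mathrm{Im}(-2\pi m t)$-type contributions by $-2\pi|m|y$ with the correct sign choice. Meanwhile $\partial_y\mathrm{Im}\Phi_E(t+\sqrt{-1}y)=\mathrm{Re}\Phi_E'(t)$ is bounded on $D$ by a constant $K$, namely $|\alpha|$ plus the bounded log terms away from the branch points. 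Choose $y=\pm\delta$ so that the total exponent is at most $\frac{r}{2\pi}(U_E-(2\pi|m|-K)\delta)$ wherever $g_\alpha=1$. Where $g_\alpha$ varies (the collar), keep the real contour but use that $\mathrm{Im}\Phi_E<U_E$ there. Cleanest is to deform only on $I''(\alpha)$ using the vertical segments at its endpoints, where the modulus stays below $U_E$ for small $\delta$ by continuity. Then for $|m|\ge m_0$ with $2\pi m_0>K$, each coefficient is at most $Cre^{\frac{r}{2\pi}U_E}e^{-r\delta(|m|-m_0)}$, and summing the geometric series gives $O(re^{\frac{r}{2\pi}U_E})$. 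For the finitely many $0<|m|<m_0$ the exponent $\mathrm{Im}(\Phi_E(t)-2\pi mt)$ on the real line equals $\mathrm{Im}\Phi_E(t)<U_E$ already, since $2\pi mt$ is real. So the trivial bound applies to them as to $m=0$, and no deformation is needed for them.

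\textbf{Main obstacle.} In fact the trivial bound $|\widehat{h}(m)|\le\int|h|$ works for every $m$. The real difficulty is summability in $m$, which requires decay in $|m|$. Integrating by parts twice, using smoothness of $g_\alpha$ and derivative bounds on $\Phi_E^r$, gives $|\widehat{h}(m)|\le C r^3 m^{-2}e^{\frac{r}{2\pi}U_E}$. This is polynomially worse than the target $Mr$. So I would use the contour shift above to obtain exponential decay uniform in $r$ for large $|m|$, combined with the trivial bound for bounded $|m|$. Controlling the deformed contour near the endpoints of $I''(\alpha)$, uniformly in $r$, is the delicate step.
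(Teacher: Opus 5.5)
There is a genuine gap, and it sits in the step you treat as trivial. Your claim that $\mathrm{Im}\,\Phi_E$ is monotone on $I(\alpha)$ fails for $\alpha>\frac{\pi}{3}$, which is exactly the range where the lemma has content. For $\alpha\le\frac{\pi}{3}$ the function is indeed decreasing, but there the alternating part is trivially small anyway.

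On $(0,\frac{\alpha}{2})$ one has $\frac{d}{dx}\mathrm{Im}\,\Phi_E(x)=\log\bigl(2(\cos 2x-\cos\alpha)\bigr)$ (plus sign, not minus). This is positive near $x=0$ when $\cos\alpha<\frac12$ and tends to $-\infty$ as $x\to\frac{\alpha}{2}$. Hence there is an interior maximum at $x_1=\frac12\arccos(\cos\alpha+\frac12)$. This is the maximum listed in part (ii) of the preceding lemma, which you dropped. Its value strictly exceeds $\max\{0,\mathrm{Im}\,\Phi_E(\frac{\alpha}{2})\}$, so it exceeds $U_E$ for some admissible choices of $U_E$. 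For $\alpha$ near $\alpha_0$ it exceeds every admissible choice: there $\mathrm{Im}\,\Phi_E(x_1)\approx 0.39$, while $\mathrm{Im}\,\Phi_E(x_0)=\frac12\mathrm{Vol}(M_\alpha(E))\approx 0.13$.

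Consequently $\int|h_{\alpha,r}|$ is of order $e^{\frac{r}{2\pi}\mathrm{Im}\,\Phi_E(x_1)}$, and your undeformed estimates for $m=0$ and $0<|m|<m_0$ do not give $e^{\frac{r}{2\pi}U_E}$. Your large-$|m|$ shift estimate also starts from this false real-axis bound. It could be repaired by enlarging $m_0$, but for $m=0,-1$ a small shift lowers the exponent only by about $\pi\delta$. As a sanity check: if the real-axis bound held, Poisson summation would be superfluous, since $\#I_1\max_{k\in I_1}|A_k(E;j)|$ would already bound the alternating sum. The individual terms over $I_1$ are exponentially larger than the volume term, so the lemma is a cancellation statement.

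The missing idea is to deform the core integral over $I''(\alpha)$ a finite distance off the real axis for every $m$, in particular for $m=0$ and $m=-1$. On $I(\alpha)$ one computes $\mathrm{Re}\,\Phi_E'(x)=\pi$, which is the source of the alternation. So $\mathrm{Re}(\Phi_E'+2\pi m)=(2m+1)\pi\neq 0$, and no Fourier mode has a real critical point. It follows that $\mathrm{Im}(\Phi_E(z)+2\pi mz)$ decreases as $z$ moves into the lower half-plane when $m\ge 0$, and into the upper half-plane when $m\le -1$. Lemma~\ref{pathsE} shows it tends to $-\infty$ uniformly for $\mathrm{Re}\,z\in I''(\alpha)$. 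This produces $C_0(\alpha)$ and $C_{-1}(\alpha)$ with exponent below $U_E$ on the deformed part. Since $\mathrm{Im}(2\pi mz)$ is monotone in $m$ on each half-plane, $C_0(\alpha)$ serves all $m\ge 0$ and $C_{-1}(\alpha)$ serves all $m\le -1$. The collar $I'(\alpha)\setminus I''(\alpha)$ stays on the real axis, where the choice of $\varepsilon$ gives the bound.

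Finally, summability in $m$ is not the obstacle you think. After rescaling the frequency is $mr$, so two integrations by parts gain $(mr)^{-2}$ against the prefactor $\frac{r}{2\pi}$ and a second derivative of size $O(r^2)$. That gives $O(rm^{-2})$, not $O(r^3m^{-2})$. Combined with the deformed contours, this is the paper's bound $\frac{M'r}{2\pi m^2}\ell(C)e^{\frac{r}{2\pi}U_E}$. Also, the Fourier factor contributes $+2\pi mt$, not $-2\pi mt$, to the exponent, and this determines the direction of each deformation.
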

\begin{proof}
Assume that $m\neq0$. Then we have
\begin{align*}
\widehat{h_{\alpha,r}}(m)&=\int_\mathbb{R}g_\alpha\left(\frac{2\pi x}{r}\right)\exp\left[\frac{r}{2\pi\sqrt{-1}}\Phi_E^r\left(\frac{2\pi x}{r}\right)\right]e^{-2\pi mx\sqrt{-1}}dx\\
&=\frac{r}{2\pi}\int_\mathbb{R}g_\alpha(x)\exp\left[\frac{r}{2\pi\sqrt{-1}}\Phi_E^r(x)\right]e^{-mrx\sqrt{-1}}dx\\
&=\frac{1}{2\pi m\sqrt{-1}}\int_\mathbb{R}\left\{g_\alpha'(x)+\frac{r}{2\pi\sqrt{-1}}g_\alpha(x){\Phi_E^{r}}'(x)\right\}\exp\left[\frac{r}{2\pi\sqrt{-1}}\Phi_E^r(x)\right]e^{-mrx\sqrt{-1}}dx\\
&=\frac{1}{2\pi m^2}\int_\mathbb{R}\left[\frac{1}{r}g_\alpha''(x)+\frac{1}{2\pi\sqrt{-1}}\left\{2g_\alpha'(x){\Phi_E^r}'(x)+g_\alpha(x){\Phi_E^r}''(x)+\frac{r}{2\pi\sqrt{-1}}g_\alpha(x){\Phi_E^r}'(x)^2\right\}\right]\\
&\hspace{23em}\times\exp\left[\frac{r}{2\pi\sqrt{-1}}\{\Phi_E^r(x)+2\pi mx\}\right]dx.
\end{align*}
Since $g_\alpha$ is supported in $I'(\alpha)$, we split the last integral into the core part over $I''(\alpha)$ and the transition part over $I'(\alpha)\backslash I''(\alpha)$. On $I''(\alpha)$, we have $g_\alpha(x)=1$ and $g_\alpha'(x)=g_\alpha''(x)=0$. Hence, the integrand of the core part is holomorphic in a neighborhood of the contours considered below. We deform only this core integral, while the transition part is kept on the real axis. Thus, there exists a constant $M'>0$ which does not depend on $r$ such that
\begin{align}
|\widehat{h_{\alpha,r}}(m)|\leq&\,\frac{M'r}{2\pi m^2}\int_{I(\alpha)}\exp\left[\frac{r}{2\pi}\mathrm{Im}(\Phi_E^r(x)+2\pi mx)\right]dx.\label{nonzeroevE}
\end{align}

Recall that $\Phi_E^r(z)$ uniformly converges to $\Phi_E(z)$ in a suitable domain. Hence, assume that $r$ is sufficiently large. Henceforth, we use a suitable contour of $\mathrm{Im}(\Phi_E(x)+2\pi mx)$ to deform the path of integration for any integer $m$. Let $C_{-1}(\alpha)$ denote the path obtained from $I(\alpha)$ by keeping $I'(\alpha)\backslash I''(\alpha)$ on the real axis and deforming only $I''(\alpha)$ into the first quadrant so that $\mathrm{Im}(\Phi_E(x) - 2\pi x)<U_E$ on the deformed part. Similarly, let $C_0(\alpha)$ be the path obtained from $I(\alpha)$ by keeping $I'(\alpha)\backslash I''(\alpha)$ on the real axis and deforming only $I''(\alpha)$ into the fourth quadrant so that $\mathrm{Im}(\Phi_E(x))<U_E$ on the deformed part. The paths $C_{-1}(\alpha)$ and $C_0(\alpha)$ are shown in Figure \ref{FigPathE}.
\begin{lemma}\label{pathsE}
The paths $C_{-1}(\alpha)$ and $C_0(\alpha)$ exist. Moreover, $C_{-1}(\alpha)$ lies in the first quadrant or on the real axis and $C_0(\alpha)$ lies in the fourth quadrant or on the real axis.
\end{lemma}
A proof of Lemma \ref{pathsE} is given in Section \ref{lemmaproofs}.
\begin{figure}[htbp]
\centering\includegraphics[width=7.5cm]{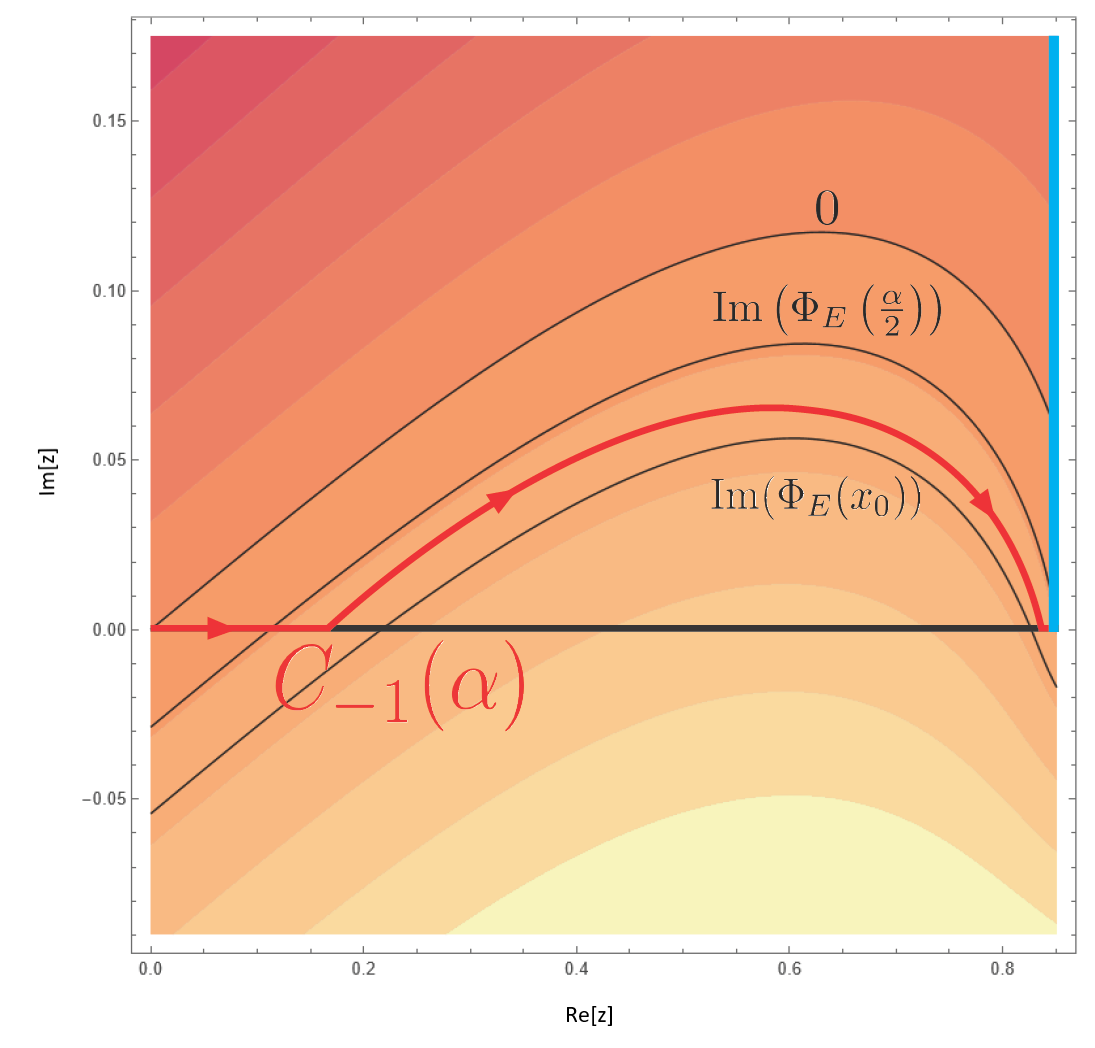}
\centering\includegraphics[width=7.25cm]{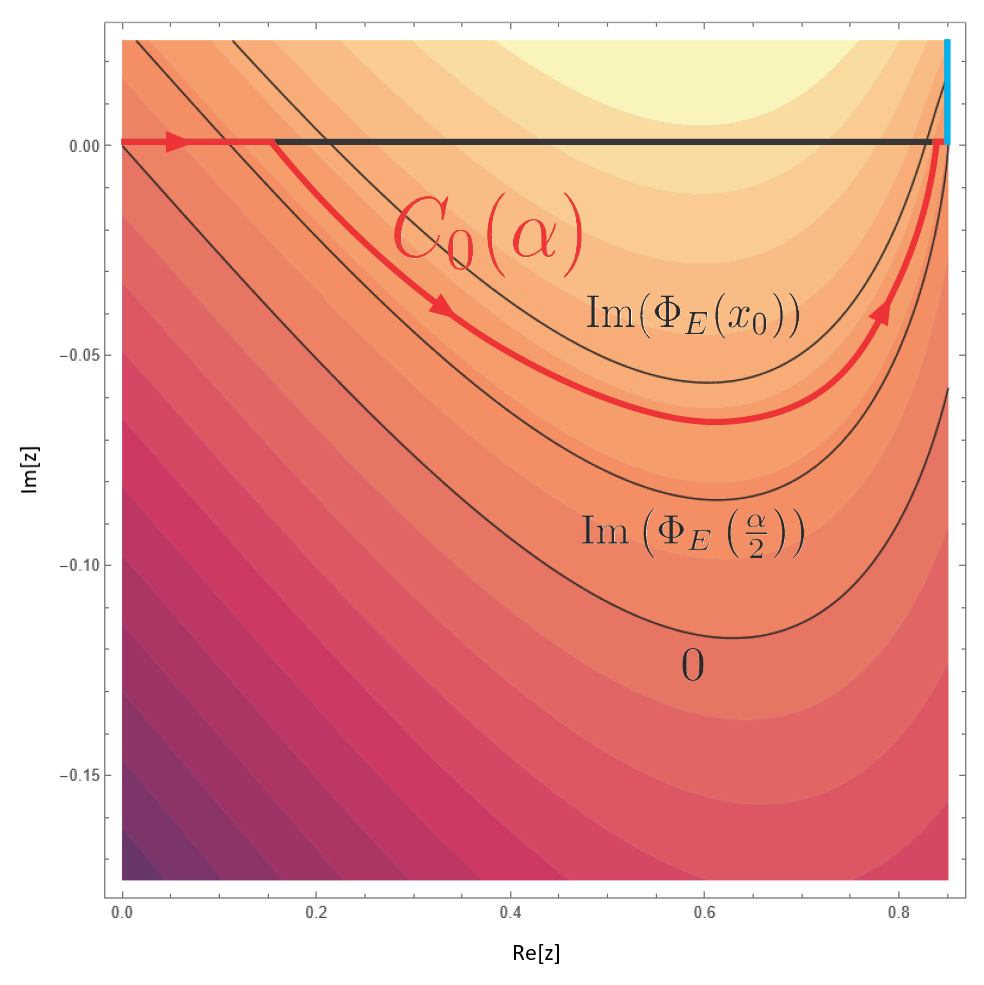}
\caption{The paths of integration along the contours and the real axis: $C_{-1}(\alpha)$ (left) and $C_0(\alpha)$ (right) at $\alpha=\frac{13\pi}{24}$. The red oriented lines are the integration paths and the blue lines are the branch cuts. The black curves indicate the level sets of $\mathrm{Im}(\Phi_E(x)-2\pi x)$ and $\mathrm{Im}(\Phi_E(x))$. }\label{FigPathE}
\end{figure}

From \eqref{nonzeroevE} and Lemma \ref{pathsE}, we get
\[|\widehat{h_{\alpha,r}}(-1)|\leq\frac{M'r}{2\pi}\int_{C_{-1}(\alpha)}\exp\left[\frac{r}{2\pi}\mathrm{Im}(\Phi_E^r(x)-2\pi x)\right]dx\leq\frac{M'r}{2\pi}\ell(C_{-1}(\alpha))e^{\frac{r}{2\pi}U_E}.\]
Furthermore, since $\mathrm{Im}(x)\geq0$ for $x$ in $C_{-1}(\alpha)$, $\mathrm{Im}(\Phi_E(x)+2\pi mx)\leq\mathrm{Im}(\Phi_E(x)-2\pi x)$ for $m<-1$, we obtain
\begin{equation}
|\widehat{h_{\alpha,r}}(m)|\leq\frac{M'r}{2\pi m^2}\ell(C_{-1}(\alpha))e^{\frac{r}{2\pi}U_E}\quad(m<-1).\label{nonzeroE}
\end{equation}
We next consider the case where $m=0$:
\begin{align*}
\widehat{h_{\alpha,r}}(0)&=\int_\mathbb{R}g_\alpha\left(\frac{2\pi x}{r}\right)\exp\left[\frac{r}{2\pi\sqrt{-1}}\Phi_E^r\left(\frac{2\pi x}{r}\right)\right]dx\\
&=\frac{r}{2\pi}\left\{\int_{I''(\alpha)}\exp\left[\frac{r}{2\pi\sqrt{-1}}\Phi_E^r(x)\right]dx+\int_{I'(\alpha)\backslash I''(\alpha)}g_\alpha(x)\exp\left[\frac{r}{2\pi\sqrt{-1}}\Phi_E^r(x)\right]dx\right\}.
\end{align*}
The integrand of the first integral is holomorphic, and hence we may deform only this integral to the deformed part of $C_0(\alpha)$. The second integral is kept on the real axis. Therefore, we know
\begin{equation}
|\widehat{h_{\alpha,r}}(0)|\leq\frac{r}{2\pi}\{\ell(C_0(\alpha))+\ell(I'(\alpha)\backslash I''(\alpha))\}e^{\frac{r}{2\pi}U_E}.\notag
\end{equation}
Applying the same decomposition for $m>0$, and since $\mathrm{Im}(x)\leq 0$ for $x$ in $C_0(\alpha)$, $\mathrm{Im}(\Phi_E(x)+2\pi mx)\leq\mathrm{Im}(\Phi_E(x))$ for $m>0$, and then we obtain
\begin{equation}
|\widehat{h_{\alpha,r}}(m)|\leq\frac{M''r}{2\pi m^2}\ell(C_0(\alpha))e^{\frac{r}{2\pi}U_E}\quad(m>0)\label{zeroE}
\end{equation}
for some $M''>0$. 

From \eqref{nonzeroE} and \eqref{zeroE}, there exists $M>0$ such that
\[\left|\sum_{m\in\mathbb{Z}}\widehat{h_{\alpha,r}}(m)\right|\leq Mre^{\frac{r}{2\pi}U_E}.\]
\end{proof}

We are now in a position to complete the proof of Theorem \ref{MainTheorem1}.
\begin{proof}[Proof of Theorem \ref{MainTheorem1}]
First, 
\[\left|\sum_{k\in I_1}A_k(E;j)\right|=\frac{1}{|\{1\}|}\left|\sum_{k\in I_1}\exp\left[\frac{r}{2\pi\sqrt{-1}}\Phi_E^r\left(\frac{2\pi k}{r}\right)\right]\right|\]
and $\lim_{\substack{r\to\infty\\r:\mathrm{odd}}}\frac{4\pi}{r}\log\frac{1}{|\{1\}|}=0.$ For a sufficiently large $r$, we have
\begin{align*}
&\left|\sum_{k\in I_1}\exp\left[\frac{r}{2\pi\sqrt{-1}}\Phi_E^r\left(\frac{2\pi k}{r}\right)\right]\right|\\
&=\left|\sum_{m\in\mathbb{Z}}h_{\alpha,r}(m)+\sum_{k\in \left.I_1\middle\backslash\left(\mathbb{Z}\cap \frac{r}{2\pi}I''(\alpha)\right)\right.}(1-g_\alpha)\left(\frac{2\pi k}{r}\right)\exp\left[\frac{r}{2\pi\sqrt{-1}}\Phi_E^r\left(\frac{2\pi k}{r}\right)\right]\right|\\
&\leq\left|\sum_{m\in\mathbb{Z}}h_{\alpha,r}(m)\right|+\sum_{k\in \left.I_1\middle\backslash\left(\mathbb{Z}\cap \frac{r}{2\pi}I''(\alpha)\right)\right.}\exp\left[\frac{r}{2\pi}\mathrm{Im}\left(\Phi_E^r\left(\frac{2\pi k}{r}\right)\right)\right]\\
&\hspace{1em}\leq\left|\sum_{m\in\mathbb{Z}}h_{\alpha,r}(m)\right|+\#\left.I_1\middle\backslash\left(\mathbb{Z}\cap \frac{r}{2\pi}I''(\alpha)\right)\right.e^{\frac{r}{2\pi}U_E}.
\end{align*}
Furthermore, we also have
\[\left|\sum_{m\in\mathbb{Z}}h_{\alpha,r}(m)\right|=\left|\sum_{m\in\mathbb{Z}}\widehat{h_{\alpha,r}}(m)\right|\leq Mre^{\frac{r}{2\pi}U_E}\]
from Lemma \ref{PoiE} and Lemma \ref{evaE}. Thus, we know
\begin{align}
\frac{\left|\sum_{k\in I_1}A_k(E;j)\right|}{\left|\sum_{k\in I_2}A_k(E;j)\right|}\leq Mr^2e^{\frac{r}{2\pi}(U_E-\mathrm{Im}(\Phi_E(x_0)))}\to0\quad(r\to\infty,r:\mathrm{odd})\label{expsmallE}
\end{align}
since we assume that $U_E<\mathrm{Im}(\Phi_E(x_0))$ and Lemma \ref{constantsign} holds. Now it clearly holds that
\begin{equation}
\left|\sum_{k\in I_2}A_k(E;j)\right|-\left|\sum_{k\in I_1}A_k(E;j)\right|\leq|V_{j}^{(r)}(E)|\leq\left|\sum_{k\in I_1}A_k(E;j)\right|+\left|\sum_{k\in I_2}A_k(E;j)\right|,\label{triangleE}
\end{equation}
and we have
\begin{align}
&\lim_{\substack{r\to\infty\\r:\mathrm{odd}}}\frac{4\pi}{r}\log\left|\left|\sum_{k\in I_2}A_k(E;j)\right|\pm\left|\sum_{k\in I_1}A_k(E;j)\right|\right|\notag\\
&=\lim_{\substack{r\to\infty\\r:\mathrm{odd}}}\frac{4\pi}{r}\left\{\log\left|\sum_{k\in I_2}A_k(E;j)\right|+\log\left|1\pm\frac{\left|\sum_{k\in I_1}A_k(E;j)\right|}{\left|\sum_{k\in I_2}A_k(E;j)\right|}\right|\right\}\notag\\
&=\mathrm{Vol}(M_\alpha(E))+\lim_{\substack{r\to\infty\\r:\mathrm{odd}}}\frac{4\pi}{r}\log\left|1\pm\frac{\left|\sum_{k\in I_1}A_k(E;j)\right|}{\left|\sum_{k\in I_2}A_k(E;j)\right|}\right|.\label{lastE}
\end{align}
From \eqref{expsmallE}, \eqref{triangleE}, and \eqref{lastE}, we finally obtain
\[\lim_{\substack{r\to\infty\\r:\mathrm{odd}}}\frac{4\pi}{r}\log|V_{j}^{(r)}(E)|=\mathrm{Vol}(M_\alpha(E)).\]
Note that the condition $\mathrm{Im}\left(\Phi_E\left(\frac{\alpha}{2}\right)\right)<\mathrm{Im}(\Phi_E(x_0))$ is numerically equivalent to $\alpha<1.7647826174\ldots$ by a computation.
\end{proof}

\section{The case of the Borromean rings}\label{SecB}
Although the colored Jones invariant of the Borromean rings is more complicated than that of the figure-eight knot, the same general strategy still applies.

\subsection{The colored Jones invariant for $B$}
The colored Jones invariant for the Borromean rings $B$ is
\begin{align*}
	V_{j_1,j_2,j_3}^{(r)}(B)&=\sum_{k=0}^{\min_{i=1,2,3}\{2j_i,r-(2j_i+1)-1\}}(-1)^k\frac{\{2j_1+1+k\}!\{2j_2+1+k\}!\{2j_3+1+k\}!}{\{1\}\{2j_1-k\}!\{2j_2-k\}!\{2j_3-k\}!}\left(\frac{\{k\}!}{\{2k+1\}!}\right)^2
\end{align*}
as in \cite{Habiro2008,Murakami-Tran2025} (see also \cite{Habiro2000}). The above formula has the following form:
\begin{align*}
V_{j_1,j_2,j_3}^{(r)}(B)&=\sum_{k=0}^{k_{\max}}\frac{1}{\{1\}}(-1)^{k+1}s^{-(2j_1+1)(2k+1)-(2j_2+1)(2k+1)-(2j_3+1)(2k+1)+(3k+2)(k+1)}\\
	&\hspace{24em}\times\frac{(s^2)_{k}^2}{(s^2)_{2k+1}^2}\prod_{i=1}^3\frac{(s^2)_{2j_i+1+k}}{(s^2)_{2j_i-k}},
\end{align*}
where $k_{\max}=\min_{i=1,2,3}\{2j_i,r-(2j_i+1)-1\}$. The possible values of $n$ in the form $(s^2)_n$ above for the case of $B$ are $k$, $2k+1$, $2j_i+1+k$, and $2j_i-k$ $(i=1,2,3)$. Then we have
\begin{align*}
	V_{j_1,j_2,j_3}^{(r)}(B)&\sim\sum_{k=0}^{k_{\max}}\frac{1}{\{1\}}\exp\left[\frac{r}{2\pi\sqrt{-1}}\left\{-\pi\frac{2\pi k}{r}+4\sum_{i=1}^3\frac{2\pi j_i}{r}\frac{2\pi k}{r}-3\left(\frac{2\pi k}{r}\right)^2\right.\right.\\
	&\hspace{12em}-\frac{1}{2}\sum_{i=1}^3(\mathrm{Li}_2(e^{2\sqrt{-1}\frac{2\pi(2j_i+1+k)}{r}})-\mathrm{Li}_2(e^{2\sqrt{-1}\frac{2\pi(2j_i-k)}{r}}))\\
	&\hspace{14em}\left.\left.-\mathrm{Li}_2(e^{2\sqrt{-1}\frac{2\pi k}{r}})+\mathrm{Li}_2(e^{2\sqrt{-1}\frac{2\pi(2k+1)}{r}})\right\}\right]\quad(r\to\infty).
\end{align*}
We regard $\frac{2\pi k}{r}$ as the continuous real variable $x$ for a large $r$. 

By extending $x$ to a complex variable $z$, the exponents with $\frac{r}{2\pi\sqrt{-1}}$ of the exponential functions in the calculated summands turn into the potential function
\begin{align*}
\Phi_B(\alpha_1,\alpha_2,\alpha_3;z)=&\sum_{i=1}^3\left\{-\frac{1}{2}\mathrm{Li}_2(e^{2\sqrt{-1}(\alpha_i/2+z)})+\frac{1}{2}\mathrm{Li}_2(e^{2\sqrt{-1}(\alpha_i/2-z)})\right\}\\
&\hspace{7em}-\mathrm{Li}_2(e^{2\sqrt{-1}z})+\mathrm{Li}_2(e^{4\sqrt{-1}z})+(\alpha_1+\alpha_2+\alpha_3+5\pi)z-3z^2.
\end{align*}
As in the case of the figure-eight knot, the symbol $\Phi_B(z)$ denotes this function. Note that there is also ambiguity as we see in Section \ref{tcjiaipffE}. 

\begin{remark}
The potential function $\Phi_B(z)$ has the branch cuts along
\begin{align*}
&\left\{\pm\pi n\mp\frac{\alpha_i}{2}\mp\sqrt{-1}R\,\middle|\,R\geq0\right\},\quad\left\{\pi n-\sqrt{-1}R\,\middle|\,R\geq0\right\},\quad\left\{\frac{\pi n}{2}-\sqrt{-1}R\,\middle|\,R\geq0\right\}
\end{align*}
in the complex plane ($i=1,2,3;n\in\mathbb{Z}$).
\end{remark}

If we consider the potential functions $\Phi_B(z)$ for $z=x\in\mathbb{R}$, we get 
\begin{align*}
	\mathrm{Im}(\Phi_B(x))&=-\sum_{i=1}^3\left\{\Lambda\left(\frac{\alpha_i}{2}+x\right)-\Lambda\left(\frac{\alpha_i}{2}-x\right)\right\}-2\Lambda(x)+2\Lambda(2x)\\
	&=-\sum_{i=1}^3\left\{\Lambda\left(\frac{\alpha_i}{2}+x\right)-\Lambda\left(\frac{\alpha_i}{2}-x\right)\right\}+2\Lambda\left(\frac{\pi}{2}+x\right)+2\Lambda\left(\frac{\pi}{2}-x\right)+\Lambda(x)-\Lambda(-x)\\
	&=-\Delta\left(\frac{\alpha_1}{2},x\right)-\Delta\left(\frac{\alpha_2}{2},x\right)-\Delta\left(\frac{\alpha_3}{2},x\right)+2\Delta\left(\frac{\pi}{2},x\right)+\Delta(0,x)
\end{align*}
since $\Lambda(s)=\frac{1}{2}\mathrm{Im}(\mathrm{Li}_2(e^{2\sqrt{-1}s}))$ and $\Lambda(s)=\frac{1}{2}\Lambda(2s)-\Lambda\left(s+\frac{\pi}{2}\right)$ hold. 

The colored Jones invariant $V_{j_1,j_2,j_3}^{(r)}(B)$ has the summand
\[A_k(B;j_1,j_2,j_3)=(-1)^k\frac{\{2j_1+1+k\}!\{2j_2+1+k\}!\{2j_3+1+k\}!}{\{1\}\{2j_1-k\}!\{2j_2-k\}!\{2j_3-k\}!}\left(\frac{\{k\}!}{\{2k+1\}!}\right)^2.\]
Let $R_k(B;j_1,j_2,j_3)$ be the ratio $A_k(B;j_1,j_2,j_3)/A_{k-1}(B;j_1,j_2,j_3)$ for $k\geq 1$, where $R_{0}(B;j_1,j_2,j_3)=A_0(B;j_1,j_2,j_3)$. We know
\[A_k(B;j_1,j_2,j_3)=\prod_{\nu=0}^kR_\nu(B;j_1,j_2,j_3)\]
holds and then $A_k(B;j_1,j_2,j_3)$ forms a real-valued sequence indexed by $k$. The sign of $A_k(B;j_1,j_2,j_3)$ is alternating whenever $R_k(B;j_1,j_2,j_3)<0$, and remains constant whenever $R_k(B;j_1,j_2,j_3)>0$. Now we compute the ratio as follows:
\begin{align*}
	R_k(B;j_1,j_2,j_3)&=-16\frac{\sin^2\frac{2\pi k}{r}}{\sin^2\frac{2\pi(2k+1)}{r}\sin^2\frac{4\pi k}{r}}\prod_{i=1}^3\sin\frac{2\pi(2j_i+1+k)}{r}\sin\frac{2\pi(2j_i+1-k)}{r}\\
	&=2\frac{\sin^2\frac{2\pi k}{r}}{\sin^2\frac{2\pi(2k+1)}{r}\sin^2\frac{4\pi k}{r}}\prod_{i=1}^3\left\{\cos\frac{4\pi(2j_i+1)}{r}-\cos\frac{4\pi k}{r}\right\}.
\end{align*}
The ratio $R_k(B;j_1,j_2,j_3)$ is never equal to 0 since $0\leq k\leq \min_{i=1,2,3}\{2j_i,r-(2j_i+1)-1\}$. 

We may assume that $j_1\leq j_2\leq j_3$ without loss of generality by symmetry. A partition of the range $I=\{0,1,\ldots,k_{\max}\}$ has four subsets $I_1$, $I_2$, $I_3$, and $I_4$ depending on the sign changes of the summand. For example, if $\frac{r}{2}>2j_i+1$ for all $i$, then 
\begin{align*}
	I_1&=\left\{0,1,\ldots,\left\lfloor\frac{r}{2}-(2j_3+1)\right\rfloor\right\},\quad I_2=\left\{\left\lfloor\frac{r}{2}-(2j_3+1)\right\rfloor+1,\ldots,\left\lfloor\frac{r}{2}-(2j_2+1)\right\rfloor\right\},\\
	I_3&=\left\{\left\lfloor\frac{r}{2}-(2j_2+1)\right\rfloor+1,\ldots,\left\lfloor\frac{r}{2}-(2j_1+1)\right\rfloor\right\},\quad I_4=\left\{\left\lfloor\frac{r}{2}-(2j_1+1)\right\rfloor+1,\ldots,k_{\max}\right\}.
\end{align*}
The same discussion applies below for other combinations of the inequality relations between $\frac{r}{2}$ and $2j_i+1$. Note that these sets depend on $r$.

Similarly to Lemma \ref{sgnchgE} for the figure-eight knot, the following holds as well.
\begin{lemma}\label{sgnchgB}
The sequence of $A_k(B;j_1,j_2,j_3)$ for $B$ is alternating in $I_1$ and $I_3$ and has constant sign in $I_2$ and $I_4$. 
\end{lemma}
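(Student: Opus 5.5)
The plan is to read off the sign of $R_k(B;j_1,j_2,j_3)$ from the closed formula computed just above, exactly as in Lemma \ref{sgnchgE}. First I will check that the prefactor is positive. For $1\leq k\leq k_{\max}$,
\[2\frac{\sin^2\frac{2\pi k}{r}}{\sin^2\frac{2\pi(2k+1)}{r}\sin^2\frac{4\pi k}{r}}\]
is a ratio of squares, so it is nonnegative. It is also nonzero, because $k\leq k_{\max}<r/2$ and $r$ is odd, so none of $2\pi k/r$, $2\pi(2k+1)/r$, $4\pi k/r$ is a multiple of $\pi$. Consequently
\[\mathrm{sgn}\,R_k(B;j_1,j_2,j_3)=\prod_{i=1}^3\mathrm{sgn}\left\{\cos\frac{4\pi(2j_i+1)}{r}-\cos\frac{4\pi k}{r}\right\},\]
and the lemma reduces to counting how many of the three factors are negative for each $k$.

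Next I will analyse each factor separately, in the same way as for $E$. Since $0\leq 4\pi k/r<2\pi$ and we may restrict to $k<r/2$, the angle $4\pi k/r$ lies in $[0,2\pi)$. The factor $\cos\frac{4\pi(2j_i+1)}{r}-\cos\frac{4\pi k}{r}$ is negative exactly when $\frac{4\pi k}{r}$ is closer to $0$ (mod $2\pi$) than $\frac{4\pi(2j_i+1)}{r}$ is. Concretely, writing $\beta_i=\frac{4\pi(2j_i+1)}{r}\in(0,2\pi)$, the condition is $\frac{4\pi k}{r}<\min\{\beta_i,2\pi-\beta_i\}$. This is equivalent to $k<\left|\frac{r}{2}-(2j_i+1)\right|$ when the distance is measured appropriately, i.e.\ to $k<\min\{2j_i+1,\,r-(2j_i+1)\}$ up to the reflection $k\mapsto r/2-k$. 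I will double-check this against the $E$ computation. There, $I_1$ (the negative-ratio range) is $k\leq\lfloor|r/2-(2j+1)|\rfloor$, so I will adopt the threshold $t_i=\left|\frac{r}{2}-(2j_i+1)\right|$: factor $i$ is negative for $k\leq\lfloor t_i\rfloor$ and positive beyond. The factor is never zero because $R_k\neq0$, as noted in the text.

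Finally I will combine the three factors. In the displayed case $\frac{r}{2}>2j_i+1$ for all $i$ with $j_1\leq j_2\leq j_3$, the thresholds are ordered $t_3\leq t_2\leq t_1$. The number of negative factors is therefore $3$ on $I_1$, $2$ on $I_2$, $1$ on $I_3$, and $0$ on $I_4$, so $R_k<0$ on $I_1$ and $I_3$ and $R_k>0$ on $I_2$ and $I_4$. Since $A_k=R_kA_{k-1}$, the sequence alternates on $I_1$ and $I_3$ and has constant sign on $I_2$ and $I_4$. For the other configurations of $\frac{r}{2}$ versus $2j_i+1$, the thresholds $t_i$ are simply reordered. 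The sets $I_1,\dots,I_4$ are then defined through the sorted thresholds, and the same count of $3,2,1,0$ negative factors applies.

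The only delicate point is the correct threshold when $4\pi k/r$ passes $\pi$. I will handle it by rewriting each factor as a product, $-2\sin\frac{2\pi(2j_i+1+k)}{r}\sin\frac{2\pi(2j_i+1-k)}{r}$, and tracking the signs of these two sines as in the first line of the $R_k(B)$ computation, using $0\leq k\leq k_{\max}\leq\min\{2j_i,r-2j_i-2\}$.
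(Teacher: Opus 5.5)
Your proof is correct and follows the paper's route. The paper only says the lemma holds as in the case of $E$, i.e.\ the sign of $R_k(B;j_1,j_2,j_3)$ is read off the trigonometric factors, and your count of $3,2,1,0$ negative factors on $I_1,\dots,I_4$ is exactly that argument. Two repairs are needed:
\begin{itemize}
\item \emph{The threshold.} Your last paragraph gives the rigorous derivation. Since $0\le k<\min\{2j_i+1,\,r-(2j_i+1)\}$, both angles $\frac{2\pi(2j_i+1\pm k)}{r}$ lie in $(0,2\pi)$. Hence the $i$-th factor is negative iff $2j_i+1\pm k$ lie on the same side of $\frac r2$, i.e.\ iff $k<\left|\frac r2-(2j_i+1)\right|$. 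Rely on this and drop the cosine-distance paragraph, whose claim $\beta_i\in(0,2\pi)$ is false whenever $2j_i+1>\frac r2$, a case the paper explicitly allows.
\item \emph{The prefactor.} To see that the prefactor does not vanish, use $k_{\max}\le\frac{r-3}{2}$ rather than just $k_{\max}<\frac r2$. The weaker bound does not exclude $2k+1=r$.
\end{itemize}
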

Let $\bm{j}$ denote $(j_1,j_2,j_3)$. We can separate the sum into the alternating parts and the constant sign parts:
\[V_{\bm{j}}^{(r)}(B)=\sum_{k\in I_1}A_k(B;\bm{j})+\sum_{k\in I_2}A_k(B;\bm{j})+\sum_{k\in I_3}A_k(B;\bm{j})+\sum_{k\in I_4}A_k(B;\bm{j}).\]

Now since $k_{\max}$ is equal to $\min\{2j_1,r-(2j_3+1)-1\}$ by the assumption $j_1\leq j_2\leq j_3$, $\frac{2\pi}{r}k_{\max}$ asymptotically tends to $\pi-\frac{\alpha_1}{2}$ or $\pi-\frac{\alpha_3}{2}$ because of comparison between $2j_1$ and $r-(2j_3+1)-1$. Moreover, we may assume that $\alpha_1\leq\alpha_2\leq\alpha_3$; $\min\{\alpha_1,\alpha_2,\alpha_3\}=\alpha_1$.

Because of \eqref{logfact}, we get
\begin{align}
	\log|A_k(B;\bm{j})|&=\frac{r}{2\pi}\left[-\sum_{i=1}^3\left\{\Lambda\left(\frac{2\pi(2j_i+1+k)}{r}\right)-\Lambda\left(\frac{2\pi(2j_i-k)}{r}\right)\right\}\right.\notag\\
&\hspace{10em}\left.-2\Lambda\left(\frac{2\pi k}{r}\right)+2\Lambda\left(\frac{2\pi(2k+1)}{r}\right)\right]+O(\log r)\quad(r\to\infty).\notag\label{summationasymptotics}
\end{align}
Hence, we have 
\begin{equation}
\frac{4\pi}{r}\log|A_k(B;\bm{j})|\sim2\mathrm{Im}(\Phi_B(x))\quad(r\to\infty).\label{coincidenceB}
\end{equation}
Let $T=\tan\theta_0^\pm$, where $\theta_0^\pm$ is the continuous limit of the points satisfying the extremal condition $R_k(B;\bm{j})=\pm 1$ ($r\to\infty$). 
Then we obtain
\[R_k(B;\bm{j})=\frac{(T^2-N_1^2)(T^2-N_2^2)(T^2-N_3^2)}{T^2(1+N_1^2)(1+N_2^2)(1+N_3^2)},\]
where $N_i=\tan\frac{\alpha_i}{2}$ $(i=1,2,3)$. The extremal condition for $B$ gives the two algebraic equations 
\begin{align}
	&T^6-(N_1^2+N_2^2+N_3^2)T^4\notag\\
	&\hspace{2em}+\{N_1^2N_2^2N_3^2+2(N_1^2N_2^2+N_2^2N_3^2+N_3^2N_1^2)+N_1^2+N_2^2+N_3^2+1\}T^2\notag\\
	&\hspace{27em}-N_1^2N_2^2N_3^2=0\label{eqBB}
\end{align}
and
\begin{equation}
	(T^2+1)(T^4-(N_1^2+N_2^2+N_3^2+1)T^2-N_1^2N_2^2N_3^2)=0\label{eqBA},
\end{equation}
where the former (resp. the latter) corresponds to the condition $R_k(B;\bm{j})=-1$ (resp. $R_k(B;\bm{j})=1$). The latter reduces to 
\begin{equation}
T^4-(N_1^2+N_2^2+N_3^2+1)T^2-N_1^2N_2^2N_3^2=0\label{principleB}
\end{equation}
since $T$ is a real number. Moreover, the reduced equation has two real roots corresponding to extremal values. Let $T_B$ be the unique positive root of \eqref{eqBB} satisfying $0<T_B<N_1$, and let $T_A$ be the unique positive root of \eqref{principleB}. Then $T_A>N_3$. In addition, \eqref{eqBB} has either no root between $N_2$ and $N_3$, one double root between $N_2$ and $N_3$, or two distinct roots $N_2<T_B^-<T_B^+<N_3$. 

\begin{lemma}
Assume that $r$ is sufficiently large. 

If \eqref{eqBB} has no root between $N_2$ and $N_3$, then there exist integers $0\leq k_1\leq k_2\leq k_3\leq k_{\max}$ such that $|A_k(B;\bm{j})|$ is non-decreasing on $[0,k_1]$, non-increasing on $[k_1,k_2]$, non-decreasing on $[k_2,k_3]$, and non-increasing on $[k_3,k_{\max}]$. In particular, $|A_k(B;\bm{j})|$ has local maxima at $k=k_1$ and $k=k_3$ and a local minimum at $k=k_2$; $\frac{2\pi k_1}{r}\to\arctan T_B,\frac{2\pi k_2}{r}\to\arctan T_A,\frac{2\pi k_3}{r}\to\pi-\arctan T_A$ when $r\to\infty$. 

If \eqref{eqBB} has two distinct roots $T_B^-,T_B^+$ between $N_2$ and $N_3$, then there exist integers $0\leq k_1\leq k_2\leq k_3\leq k_4\leq k_5\leq k_{\max}$ such that $|A_k(B;\bm{j})|$ is non-decreasing on $[0,k_1]$, non-increasing on $[k_1,k_2]$, non-decreasing on $[k_2,k_3]$, non-increasing on $[k_3,k_4]$, non-decreasing on $[k_4,k_5]$, and non-increasing on $[k_5,k_{\max}]$. In particular, $|A_k(B;\bm{j})|$ has local maxima at $k=k_1,k_3,k_5$ and local minima at $k=k_2,k_4$; $\frac{2\pi k_1}{r}\to\arctan T_B,\frac{2\pi k_2}{r}\to\arctan T_B^-,\frac{2\pi k_3}{r}\to\arctan T_B^+,\frac{2\pi k_4}{r}\to\arctan T_A,\frac{2\pi k_5}{r}\to\pi-\arctan T_A$ when $r\to\infty$. 

If \eqref{eqBB} has a double root $T_B^\ast$ between $N_2$ and $N_3$, then the two possible points converge to $\arctan T_B^\ast$ when they occur.
\end{lemma}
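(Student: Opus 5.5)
The plan mirrors the proof of the corresponding lemma for $E$: monotonicity of $|A_k(B;\bm{j})|$ in $k$ is read off from whether $|R_k(B;\bm{j})|$ is greater or smaller than $1$, and for large $r$ these inequalities are governed by the continuous limit of the ratio.

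\emph{Limit of the ratio.} Put $x=\frac{2\pi k}{r}$ and $T=\tan x$, and use $\frac{4\pi(2j_i+1)}{r}\to 2\pi\mp\alpha_i$. Then, uniformly on compact subsets avoiding $x=0$ and the zeros of the factors, the limit is
\[
R_k(B;\bm{j})\to\rho(x)=2\frac{\sin^2x}{\sin^2 2x\,\sin^2 2x}\prod_{i=1}^3(\cos\alpha_i-\cos 2x).
\]
Rewriting this in $T$ with $\cos 2x=\frac{1-T^2}{1+T^2}$ and $\cos\alpha_i=\frac{1-N_i^2}{1+N_i^2}$ gives the rational expression displayed before the lemma. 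The prefactor $2\sin^2x/(\sin^2 2x\sin^2 2x)$ only supplies positive powers of $(1+T^2)$ and $T^{-2}$.

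\emph{Solving $|\rho|=1$.} The sign of $\rho$ is the sign of $\prod_i(T^2-N_i^2)$, which is consistent with Lemma \ref{sgnchgB}. So I solve $\rho=\pm1$ separately on each interval $(0,N_1)$, $(N_1,N_2)$, $(N_2,N_3)$, $(N_3,\infty)$, and then repeat this on $x\in(\pi/2,\pi-\alpha_1/2)$ using $T\mapsto -T$.
\begin{itemize}
\item $\rho=1$ is \eqref{principleB}. It is a quadratic in $T^2$ with negative constant term, so it has exactly one positive root $T_A$. Evaluating at $T^2=N_3^2$ gives a negative value, so $T_A>N_3$.
\item $\rho=-1$ is the cubic \eqref{eqBB} in $u=T^2$, with $u^0$-coefficient $-N_1^2N_2^2N_3^2<0$. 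On $(0,N_1^2)$ I check that it changes sign exactly once, for example by Descartes' rule of signs or by monotonicity of $|\rho|$ there. This gives the root $T_B$.
\item On $(N_1,N_2)$, $\rho>0$, so only $\rho=1$ could occur, and it does not, since $T_A>N_3$.
\item On $(N_2,N_3)$ the cubic has zero or two roots counted with multiplicity, because the total number of positive roots is $1$ or $3$.
\end{itemize}

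\emph{Converting to integer breakpoints.} Since $\log|A_k|$ has increments $\log|R_k|$, the function $|A_k|$ is non-decreasing exactly when $|R_k|\ge1$. Near $x=0$ we have $|\rho|\to\infty$ like $T^{-2}$, so $|A_k|$ increases first. At $\arctan T_B$ the value $|\rho|$ crosses $1$ downward, giving the local maximum $k_1$. It then stays below $1$ until $\arctan T_A$, apart from the possible interval $(\arctan T_B^-,\arctan T_B^+)$. At $\arctan T_A$ it crosses upward, and by the $x\mapsto\pi-x$ symmetry of $|\rho|$ it crosses downward at $\pi-\arctan T_A$. This produces the pattern of the lemma in both cases. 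By \eqref{coincidenceB}, these are exactly the critical points of $2\,\mathrm{Im}(\Phi_B(x))$. As in the proof for $E$, I then pick the larger or smaller of the two integer neighbours at each crossing, and use that the transversal crossings of $|\rho|$ persist for $R_k$ with $r$ large.

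\emph{Main obstacle.} The hardest step is the uniform control of the discrete ratio near the endpoints, where $\frac{2\pi k}{r}$ is close to $0$ or to $\pi-\alpha_1/2$ and the convergence $R_k\to\rho$ is not uniform. There I would argue directly from the exact trigonometric formula: for small $k$, $|R_k|\gg1$, and near $k_{\max}$ the relevant factor tends to $0$. This shows no extra monotonicity changes occur. In the double-root case, $|\rho|\le1$ with tangency at $T_B^\ast$, so any sign flips of $|R_k|-1$ produced by the discretization stay within $o(r)$ of $\arctan T_B^\ast$, which is what the last sentence of the lemma asserts.
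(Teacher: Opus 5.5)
Your approach is the paper's. The paper reads off the monotonicity of $|A_k(B;\bm{j})|$ from the derivatives of $2\,\mathrm{Im}(\Phi_B(x))$. Since $\log|\rho(x)|=\frac{d}{dx}\mathrm{Im}(\Phi_B(x))$ exactly, your interval-by-interval solution of $|\rho|=1$ is the same computation, written out in more detail.

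\textbf{The gap: the right end of the summation range.} You repeat the analysis on $x\in(\frac{\pi}{2},\pi-\frac{\alpha_1}{2})$, but that is not where the sum ends.
\begin{itemize}
\item We have $k_{\max}=\min_i\{2j_i,r-(2j_i+1)-1\}$ and $\frac{2\pi}{r}\min\{2j_i,r-2j_i-2\}\to\pi-\frac{\alpha_i}{2}$.
\item Hence $\frac{2\pi k_{\max}}{r}\to\min_i(\pi-\frac{\alpha_i}{2})=\pi-\frac{\alpha_3}{2}$ in the ordering $\alpha_1\le\alpha_2\le\alpha_3$ (so $N_1\le N_2\le N_3$) that your case split presupposes.
\item This matters for the conclusion. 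By the symmetry $\rho(\pi-x)=\rho(x)$ that you invoke, in the two-root case $|\rho|>1$ also holds on $(\pi-\arctan T_B^+,\pi-\arctan T_B^-)$. That interval lies in $(\pi-\frac{\alpha_3}{2},\pi-\frac{\alpha_2}{2})$, which is inside your range $(\frac{\pi}{2},\pi-\frac{\alpha_1}{2})$.
\item So on your stated range your own argument produces two further monotonicity changes. That contradicts the six-interval pattern you say it yields.
\item Lemma \ref{sgnchgB} gives a consistency check. On $(\pi-\frac{\alpha_3}{2},\pi-\frac{\alpha_2}{2})$ one has $\rho<0$, which would create another alternating block after $I_4$. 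The paper's partition has $I_4$ as the last block, with constant sign.
\end{itemize}

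\textbf{The fix.} Use the endpoint $\pi-\frac{\alpha_3}{2}$.
\begin{itemize}
\item Because $T_A>N_3$, you have $\pi-\arctan T_A<\pi-\frac{\alpha_3}{2}$.
\item On $(\pi-\arctan T_A,\pi-\frac{\alpha_3}{2})$ you have $0<\rho<1$, so there is no crossing after $k_3$ (resp.\ $k_5$).
\item Your remark about the vanishing factor near $k_{\max}$ then applies as $x\to\pi-\frac{\alpha_3}{2}$. The relevant factor is $\cos\frac{4\pi(2j_i+1)}{r}-\cos\frac{4\pi k}{r}$ for the index $i$ realizing the minimum.
\end{itemize}

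\textbf{Two smaller points.}
\begin{itemize}
\item Descartes' rule of signs only tells you that \eqref{eqBB} has one or three positive roots. It does not place exactly one of them in $(0,N_1^2)$. Your monotonicity argument does this. On $(0,N_1)$,
$|\rho|=\prod_i(N_i^2-T^2)/(T^2\prod_i(1+N_i^2))$, which is a product of positive, strictly decreasing factors. It therefore decreases strictly from $+\infty$ to $0$.
\item The point $x=\frac{\pi}{2}$ is also a zero of the denominator $\sin^2 2x$ inside the range, so it must be excluded from your uniform-convergence region along with $x=0$. There both $|R_k|$ and $|\rho|$ are large, because the numerator factors tend to $\cos\alpha_i+1>0$. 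So no crossing occurs there either, but you should say so.
\end{itemize}

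With these repairs your argument is complete, and more careful than the paper's sketch.
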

\begin{proof}
To determine the monotonicity and extremal values of $|A_k(B;\bm{j})|$, it suffices to consider the first and second derivatives of $2\,\mathrm{Im}(\Phi_B(x))$. In particular, $\left[\frac{\alpha_1}{2},\frac{\alpha_2}{2}\right]$ and $\left[\frac{\alpha_2}{2},\frac{\alpha_3}{2}\right]$ have no point that yields a value of $2\,\mathrm{Im}(\Phi_B(x))$ greater than $\max\left\{2\,\mathrm{Im}\left(\Phi_B\left(\frac{\alpha_1}{2}\right)\right),0\right\}$. For a maximal (resp. minimal) value of $2\,\mathrm{Im}(\Phi_B(x))$, we know that there exist two points of $|A_k(B;\bm{j})|$ around the point corresponding to the extremal point; we obtain the conclusion by choosing the larger (resp. smaller) one of the two values at the two points.
\end{proof}

\subsection{Proofs of Theorem \ref{MainTheorem2}}\label{PMT2}
In this section, we give a proof of the second main result as follows:

\mainthmS*
\noindent Let $\bm{\alpha}$ denote $(\alpha_1,\alpha_2,\alpha_3)$. The limit formula of Theorem \ref{MainTheorem2} from Conjecture \ref{mainconjecture} is
\[\lim_{\substack{r\to\infty\\r:\mathrm{odd}}}\frac{4\pi}{r}\log|V_{\bm{j}}^{(r)}(B)|=\mathrm{Vol}(M_{\bm{\alpha}}(B)).\]

For the Borromean rings $B$, the proof follows the same strategy as in the case of the figure-eight knot $E$; we first discuss the summands with the single summation index and the extremal condition $|A_k(B;\bm{j})|=|A_{k-1}(B;\bm{j})|$. Recall that the summands $A_k(B;\bm{j})$ are alternating in $I_1$ and $I_3$ and have constant sign in $I_2$ and $I_4$. 

\begin{lemma}\label{constsgn}
It holds that
\begin{equation}
\lim_{\substack{r\to\infty\\r:\mathrm{odd}}}\frac{4\pi}{r}\log\left|\sum_{k\in I_4}A_k(B;\bm{j})\right|=\lim_{\substack{r\to\infty\\r:\mathrm{odd}}}\frac{4\pi}{r}\log\max_{k\in I_4}|A_k(B;\bm{j})|,\label{qesc}
\end{equation}
where $\bm{\alpha}=(\alpha_1,\alpha_2,\alpha_3)$.
\end{lemma}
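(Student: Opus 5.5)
The plan is to copy the squeeze argument of Lemma \ref{constantsign} almost verbatim, with $I_4$ in place of $I_2$. The only input needed is Lemma \ref{sgnchgB}, which says that the sequence $A_k(B;\bm{j})$ has constant sign for $k\in I_4$.

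First, from the constant sign on $I_4$ there is no cancellation, so
\[\left|\sum_{k\in I_4}A_k(B;\bm{j})\right|=\sum_{k\in I_4}|A_k(B;\bm{j})|.\]
This sum is bounded below by its largest term. It is bounded above by $\#I_4$ times its largest term. Hence
\[\max_{k\in I_4}|A_k(B;\bm{j})|\leq\left|\sum_{k\in I_4}A_k(B;\bm{j})\right|\leq\#I_4\max_{k\in I_4}|A_k(B;\bm{j})|.\]

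Next, since $I_4\subset\{0,1,\ldots,k_{\max}\}$ and $k_{\max}\leq r$, we have $\#I_4=O(r)$. Therefore $\frac{4\pi}{r}\log\#I_4\to0$ as $r\to\infty$. Taking $\frac{4\pi}{r}\log$ of all three terms, the outer two differ by $o(1)$. Hence the limit of the middle term equals the limit of $\frac{4\pi}{r}\log\max_{k\in I_4}|A_k(B;\bm{j})|$, which gives \eqref{qesc}.

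The only point needing care is that $I_4$ is nonempty for large $r$, so that the maximum is defined. This holds because $\frac{2\pi}{r}$ times the left endpoint of $I_4$ tends to $\frac{\alpha_1}{2}$ (in the displayed case), while $\frac{2\pi}{r}k_{\max}$ tends to $\pi-\frac{\alpha_1}{2}$ or $\pi-\frac{\alpha_3}{2}$. With $\alpha_i<\pi$, both of these are strictly larger than $\frac{\alpha_1}{2}$.

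The statement itself asserts equality of limits. As in Lemma \ref{constantsign}, the limit on the right exists by \eqref{coincidenceB}: the right-hand side converges to $2\sup_{x}\mathrm{Im}(\Phi_B(x))$ over the limiting interval of $I_4$, uniformly up to $O(\frac{\log r}{r})$. Existence of that limit is the only mild obstacle, and it is handled by the uniform asymptotic coming from \eqref{logfact}. The other orderings of $\frac{r}{2}$ and the $2j_i+1$ are treated identically.
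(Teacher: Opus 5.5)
Your proposal is correct and is exactly the paper's argument: by Lemma \ref{sgnchgB} the terms have constant sign on $I_4$, so $\max_{k\in I_4}|A_k(B;\bm{j})|\leq|\sum_{k\in I_4}A_k(B;\bm{j})|\leq\#I_4\max_{k\in I_4}|A_k(B;\bm{j})|$, and the factor $\#I_4=O(r)$ disappears after taking $\frac{4\pi}{r}\log$. Your additional remarks on the nonemptiness of $I_4$ and on the existence of the right-hand limit cover points the paper leaves implicit; strictly, the uniform error in that last remark is $o(1)$ rather than $O(\frac{\log r}{r})$, because $j_i/r$ converges to its limit at an unspecified rate, but this does not affect the conclusion.
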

\begin{proof}
It is clear that
\[\left|\sum_{k\in I_4}A_k(B;\bm{j})\right|\leq\sum_{k\in I_4}|A_k(B;\bm{j})|\leq\#I_4\max_{k\in I_4}|A_k(B;\bm{j})|.\]
Since $A_k(B;\bm{j})$ has a constant sign in $I_4$, we have
\begin{equation}
\left|\sum_{k\in I_4}A_k(B;\bm{j})\right|=\sum_{k\in I_4}|A_k(B;\bm{j})|\geq\max_{k\in I_4}|A_k(B;\bm{j})|.\notag\label{musces}
\end{equation}
Thus, we know
\begin{equation}
\max_{k\in I_4}|A_k(B;\bm{j})|\leq\left|\sum_{k\in I_4}A_k(B;\bm{j})\right|\leq\#I_4\max_{k\in I_4}|A_k(B;\bm{j})|.\label{SqueezeB}
\end{equation}
Hence, from \eqref{SqueezeB}, since $\#I_4=O(r)$ $(r\to\infty)$, we obtain \eqref{qesc}.
\end{proof}

\begin{proposition}[{\cite[Theorem 3.7]{Mednykh2003}}]\label{VolBformula}
The hyperbolic volume of $M_{\alpha_1,\alpha_2,\alpha_3}(B)$ is given by the formula
\begin{equation}
\mathrm{Vol}(M_{\alpha_1,\alpha_2,\alpha_3}(B))=2\left\{\Delta\left(\frac{\alpha_1}{2},\theta_B\right)+\Delta\left(\frac{\alpha_2}{2},\theta_B\right)+\Delta\left(\frac{\alpha_3}{2},\theta_B\right)-2\Delta\left(\frac{\pi}{2},\theta_B\right)-\Delta\left(0,\theta_B\right)\right\},\label{VolB}
\end{equation}
where $\Delta(a,b)=\Lambda(a+b)-\Lambda(a-b)$ and $\theta_B\in\left(0,\frac{\pi}{2}\right)$ is a principal parameter defined by conditions
\begin{align*}
\tan\theta_B&=T,\quad T^4-(N_1^2+N_2^2+N_3^2+1)T^2-N_1^2N_2^2N_3^2=0,\\
N_1&=\tan\frac{\alpha_1}{2},\quad N_2=\tan\frac{\alpha_2}{2},\quad N_3=\tan\frac{\alpha_3}{2}.
\end{align*}
\end{proposition}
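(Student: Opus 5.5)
We plan to prove Proposition~\ref{VolBformula} by cutting $M_{\alpha_1,\alpha_2,\alpha_3}(B)$ into congruent Lambert cubes and then proving a volume formula for a single Lambert cube with the Schläfli formula. A Lambert cube is a combinatorial cube with three mutually skew "essential" edges whose dihedral angles are arbitrary; all other dihedral angles are $\frac{\pi}{2}$.

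\emph{Step 1 (decomposition).} The Borromean rings have a symmetry group generated by three commuting involutions, namely rotations by $\pi$ about three mutually orthogonal axes. Taking the quotient of $S^3$ by this group, we get a fundamental domain that is a cube. Each component $L_i$ of $B$ runs along one pair of opposite essential edges of this cube. The cone-manifold structure with cone angle $\alpha_i$ along $L_i$ therefore pulls back to a hyperbolic Lambert cube $Q(\frac{\alpha_1}{2},\frac{\alpha_2}{2},\frac{\alpha_3}{2})$. Its essential dihedral angles are $\frac{\alpha_i}{2}$, because the edge along $L_i$ is folded by the involutions. All remaining angles are $\frac{\pi}{2}$, since those edges lie on mirror or rotation axes of order two. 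Counting the order of the group acting gives
\[\mathrm{Vol}(M_{\alpha_1,\alpha_2,\alpha_3}(B))=8\,\mathrm{Vol}\bigl(Q(\tfrac{\alpha_1}{2},\tfrac{\alpha_2}{2},\tfrac{\alpha_3}{2})\bigr).\]
The existence and uniqueness of this cube for $\alpha_i\in[0,\pi)$ follows from Andreev's theorem (or by direct construction). This also identifies the region where $M_{\bm{\alpha}}$ is hyperbolic.

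\emph{Step 2 (candidate function and critical point).} Define
\[F(\bm{\alpha};\theta)=\tfrac14\Bigl\{\sum_{i=1}^3\Delta\bigl(\tfrac{\alpha_i}{2},\theta\bigr)-2\Delta\bigl(\tfrac{\pi}{2},\theta\bigr)-\Delta(0,\theta)\Bigr\}.\]
Using $\Lambda'(s)=-\log|2\sin s|$, we will compute $\partial_\theta F$. Writing everything in terms of $T=\tan\theta$ and $N_i$, the equation $\partial_\theta F=0$ should reduce exactly to the quartic $T^4-(N_1^2+N_2^2+N_3^2+1)T^2-N_1^2N_2^2N_3^2=0$. This is the same algebra as the ratio computation $R_k(B;\bm{j})=1$ above. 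Consequently, at $\theta=\theta_B$ we have
\[\frac{d}{d\alpha_i}F(\bm{\alpha};\theta_B(\bm{\alpha}))=\partial_{\alpha_i}F=-\tfrac14\log\left|\frac{\sin(\alpha_i/2+\theta_B)}{\sin(\alpha_i/2-\theta_B)}\right|.\]

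\emph{Step 3 (Schläfli and normalization).} The Schläfli formula for $Q$ reads $d\mathrm{Vol}(Q)=-\frac12\sum_i \ell_i\,d(\frac{\alpha_i}{2})$, where $\ell_i$ is the length of the $i$-th essential edge; the right-angled edges contribute nothing. We must therefore show
\[\tanh \ell_i=\frac{\tan(\alpha_i/2)}{\tan\theta_B},\]
or an equivalent relation, so that $\ell_i=\frac12\log\bigl|\sin(\frac{\alpha_i}{2}+\theta_B)/\sin(\frac{\alpha_i}{2}-\theta_B)\bigr|$ matches Step 2. This is the main obstacle. To get it, we will realize $Q$ in the hyperboloid model with three pairs of orthogonal face normals and compute the Gram matrix. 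The vanishing of its determinant gives one relation among the angles. Then the edge lengths follow from the hyperbolic trigonometry of the right-angled faces: each face is a Lambert quadrilateral with $\cosh$/$\tanh$ relations. Eliminating these should produce the quartic in $T$ with $T=\tan\theta_B$ and $\theta_B$ an auxiliary "principal parameter" (Kellerhals' approach).

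Finally, we fix the constant of integration. As $\alpha_i\to\pi$ for all $i$, the cube degenerates to a Euclidean one and its hyperbolic volume tends to $0$. In this limit $N_i\to\infty$, so $\theta_B\to\frac{\pi}{2}$, and $F\to0$ because $\Delta(a,\frac{\pi}{2})=0$ by the oddness and $\pi$-periodicity of $\Lambda$. We then integrate along a path inside the connected region $[0,\pi)^3$. Both sides have equal derivatives there and agree in the limit, so $\mathrm{Vol}(Q)=F(\bm{\alpha};\theta_B)$. Multiplying by $8$ gives \eqref{VolB}. As a sanity check, at $\bm{\alpha}=0$ the formula should give $2\cdot 3.6638\ldots$, the volume of two regular ideal octahedra.
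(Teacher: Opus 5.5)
The paper does not prove this proposition; it quotes it from Mednykh. The standard proof of that result is essentially your outline. $M_{\bm{\alpha}}(B)$ is assembled from eight copies of the Lambert cube $Q(\alpha_1/2,\alpha_2/2,\alpha_3/2)$, and each copy has volume $\frac14\{\cdots\}$ by Kellerhals' Lambert-cube formula, which is proved by the Schl\"afli argument you describe. So the route is right. However, the one substantive geometric input is missing, and your plan for it is mis-stated.

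\textbf{The missing input.} You need the identity $\tanh\ell_i=\tan(\alpha_i/2)/\tan\theta_B$ for the essential edge lengths, with $\tan\theta_B$ the positive root of the quartic. Without it, Steps 2 and 3 are only a consistency check. The Gram matrix does \emph{not} give a relation among the angles: $\alpha_1/2,\alpha_2/2,\alpha_3/2\in(0,\pi/2)$ are free parameters of the Lambert cube. What it does give is the following. The essential edge of angle $\alpha_i/2$ joins two opposite faces and is perpendicular to both, since the other two dihedral angles at each endpoint are $\pi/2$. Hence $\ell_i$ is the distance between these ultraparallel faces, and $-\cosh\ell_i$ is an entry of the $6\times 6$ Gram matrix. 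The requirement that this matrix have rank $4$ and signature $(3,1)$ then determines $\ell_1,\ell_2,\ell_3$ from the angles. For $\alpha_1=\alpha_2=\alpha_3=2a$ it reduces to $\cosh^2\ell-\cos a\cosh\ell+\cos^2 a=1$, which indeed agrees with $\tanh\ell=\tan a/\tan\theta_B$. You must either carry out this elimination in general or cite Kellerhals' result that $\tan(\alpha_i/2)/\tanh\ell_i$ is independent of $i$ and equals $\tan\theta_B$.

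\textbf{A factor-of-two slip.} As written, Steps 2 and 3 are inconsistent. Since $F$ depends on $\alpha_i$ only through $\alpha_i/2$, the correct derivative is $\partial_{\alpha_i}F=-\frac18\log\left|\sin(\alpha_i/2+\theta_B)/\sin(\alpha_i/2-\theta_B)\right|$, not $-\frac14\log|\cdots|$. This is what matches Schl\"afli, $\partial_{\alpha_i}\mathrm{Vol}(Q)=-\frac14\ell_i$ with $\ell_i=\frac12\log|\cdots|$.

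\textbf{The critical-point equation.} $\partial_\theta F=0$ is equivalent to $\prod_i(T^2-N_i^2)=\pm T^2\prod_i(1+N_i^2)$. The $+$ sign gives $(T^2+1)$ times the quartic, and the $-$ sign gives the sextic \eqref{eqBB}. So only the implication from the quartic to criticality holds; that is all you use, but ``reduces exactly'' is false.

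\textbf{Step 1 is mis-described.} Rotations by $\pi$ about three concurrent, mutually orthogonal axes generate only a Klein four-group, since each is the product of the other two. Your count would then give the factor $4$. The eightfold decomposition comes instead from the group of order $8$ generated by reflections in three mutually orthogonal planes. Concretely, this is the folded-cube picture:
\begin{itemize}
\item Reflect $Q(\alpha_1/2,\alpha_2/2,\alpha_3/2)$ in the three faces through a vertex not lying on an essential edge.
\item This gives a combinatorial cube $P$ whose six faces are bent, with angle $\alpha_i$, along segments in the Borromean (tennis-ball) pattern.
\item Folding each face along its segment yields $M_{\bm{\alpha}}(B)$.
\end{itemize}
Each $L_i$ is the union of the segments on a pair of opposite faces of $P$. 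So your ``pair of opposite essential edges'' refers to $P$, not to $Q$, which has only three essential edges.

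\textbf{Normalization.} Fixing the constant at $\alpha_i\to\pi$ needs the (standard) fact that the cubes collapse to a point. Alternatively, normalize at $\bm{\alpha}=0$, where both sides equal $16\Lambda(\pi/4)=\mathrm{Vol}(S^3\backslash B)$.
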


\begin{remark}
The cone manifold $M_{\alpha_1,\alpha_2,\alpha_3}(B)$ is hyperbolic when $0\leq\alpha_1,\alpha_2,\alpha_3<\pi$.
\end{remark} 

\begin{remark}
The algebraic equation in the condition of Proposition \ref{VolBformula} is equivalent to \eqref{principleB}.
\end{remark}

From \eqref{coincidenceB}, \eqref{qesc}, and \eqref{VolB}, we know
\begin{equation}
\lim_{\substack{r\to\infty\\r:\mathrm{odd}}}\frac{4\pi}{r}\log\left|\sum_{k\in I_4}A_k(B;\bm{j})\right|=\mathrm{Vol}(M_{\bm{\alpha}}(B)).\label{ItwoVolB}
\end{equation}

\begin{lemma}\label{simplesub}
For $\iota=2,3$, as the odd integer $r$ tends to infinity, $\frac{\left|\sum_{k\in I_\iota}A_k(B;\bm{j})\right|}{\left|\sum_{k\in I_4}A_k(B;\bm{j})\right|}$ tends to zero.
\end{lemma}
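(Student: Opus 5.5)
We bound the numerator crudely by its largest term and compare it with the lower bound on the denominator coming from Lemma \ref{constsgn} and \eqref{ItwoVolB}. For $\iota=2,3$ the triangle inequality gives
\[
\left|\sum_{k\in I_\iota}A_k(B;\bm{j})\right|\leq\#I_\iota\max_{k\in I_\iota}|A_k(B;\bm{j})|,
\]
and $\#I_\iota=O(r)$. By \eqref{coincidenceB} (more precisely the uniform asymptotic for $\log|A_k(B;\bm{j})|$ derived from \eqref{logfact}, with the $O(\log r)$ error uniform in $k$), we have
\[
\frac{4\pi}{r}\log\max_{k\in I_\iota}|A_k(B;\bm{j})|\leq \sup_{x\in J_\iota}2\,\mathrm{Im}(\Phi_B(x))+o(1).
\]
Here $J_\iota$ is the limiting interval of $\frac{2\pi}{r}I_\iota$. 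In the case $\frac r2>2j_i+1$ for all $i$, these are $J_2=[\frac{\alpha_3}{2},\frac{\alpha_2}{2}]$-type and $J_3=[\frac{\alpha_2}{2},\frac{\alpha_1}{2}]$-type intervals. After reordering by the convention $\alpha_1\leq\alpha_2\leq\alpha_3$, they are the intervals $\left[\frac{\alpha_1}{2},\frac{\alpha_2}{2}\right]$ and $\left[\frac{\alpha_2}{2},\frac{\alpha_3}{2}\right]$ that appear in the preceding lemma. The other sign configurations of $\frac r2-(2j_i+1)$ are handled identically, since only the endpoints of $J_\iota$ change.

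The key input is the statement, established in the proof of the preceding lemma, that on $\left[\frac{\alpha_1}{2},\frac{\alpha_2}{2}\right]\cup\left[\frac{\alpha_2}{2},\frac{\alpha_3}{2}\right]$ the function $2\,\mathrm{Im}(\Phi_B(x))$ never exceeds $\max\{2\,\mathrm{Im}(\Phi_B(\frac{\alpha_1}{2})),0\}$. In the case where \eqref{eqBB} has roots $T_B^\pm$ between $N_2$ and $N_3$, there is an interior local maximum at $\arctan T_B^+$. I would verify by comparing values, using the monotonicity structure of that lemma, that this maximum is still bounded by the value at the endpoint $\frac{\alpha_1}{2}$ or by $0$. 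This comparison is the step I expect to be the main obstacle. I would first try to show it by integrating $\frac{d}{dx}\mathrm{Im}\Phi_B=-\frac12\log|R|$, which is expressed through $T=\tan x$ as above, over the interval and checking the sign. For the two degenerate cases, $k_{\max}$ cutting $I_3$ short and a double root, only endpoints shift, and the same bound applies.

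On $\Omega_0$ we then have $\max\{2\,\mathrm{Im}(\Phi_B(\frac{\alpha_1}{2})),0\}<\mathrm{Vol}(M_{\bm\alpha}(B))$. Note that $\mathrm{Vol}>0$, so the bound $0$ is also strictly below it. Choose $U_B$ strictly between these two numbers. Combining this with \eqref{ItwoVolB}, for large odd $r$ we get
\[
\frac{\left|\sum_{k\in I_\iota}A_k(B;\bm{j})\right|}{\left|\sum_{k\in I_4}A_k(B;\bm{j})\right|}\leq C r^{2}\exp\left[\frac{r}{4\pi}\bigl(U_B-\mathrm{Vol}(M_{\bm\alpha}(B))\bigr)\right]\to0.
\]
The $r^2$ factor absorbs $\#I_\iota$ and the $e^{O(\log r)}$ errors; this is the same exponential-gap argument as in \eqref{expsmallE}. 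No Poisson summation is needed here, because the trivial bound already suffices when the supremum over $J_\iota$ is strictly below the volume.
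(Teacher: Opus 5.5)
Your argument is essentially the paper's proof. Like the paper, you bound $\bigl|\sum_{k\in I_\iota}A_k(B;\bm{j})\bigr|$ by $\#I_\iota\max_{k\in I_\iota}|A_k(B;\bm{j})|$, control that maximum by $\max\{2\,\mathrm{Im}(\Phi_B(\frac{\alpha_1}{2})),0\}$ plus a small slack using exactly the claim from the preceding lemma that you isolate, and compare with the lower bound $\exp[\frac{r}{4\pi}\{\mathrm{Vol}(M_{\bm{\alpha}}(B))-\delta\}]$ for the $I_4$ sum coming from \eqref{ItwoVolB}.

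The step you flag is only asserted in the paper, so your proof is as complete as the paper's. If you do check it, note that $\frac{d}{dx}\mathrm{Im}(\Phi_B(x))=\log|R|$, not $-\frac{1}{2}\log|R|$. One way to close it: for $\frac{\alpha_1}{2}<x<\frac{\alpha_3}{2}$, both $\mathrm{Im}(\Phi_B(x))$ and $\mathrm{Im}(\Phi_B(x))-\mathrm{Im}(\Phi_B(\frac{\alpha_1}{2}))$ increase in $\alpha_3$, and at $\alpha_3=\pi$ the function is convex on $[\frac{\alpha_2}{2},\frac{\pi}{2}]$ and vanishes at $\frac{\pi}{2}$.
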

\begin{proof}
Assume that $r$ is sufficiently large. Choose a small $\delta>0$ so that $\max\left(2\,\mathrm{Im}\left(\Phi_B\left(\frac{\alpha_1}{2}\right)\right),0\right)+2\delta<\mathrm{Vol}(M_{\bm{\alpha}}(B)).$ We have
\begin{align*}
\left|\sum_{k\in I_\iota}A_k(B;\bm{j})\right|\leq&\sum_{k\in I_\iota}|A_k(B;\bm{j})|\leq\#I_\iota\max_{k\in I_\iota}|A_k(B;\bm{j})|\\
&\leq\#I_\iota\exp\left[\frac{r}{4\pi}\left\{\max\left(2\,\mathrm{Im}\left(\Phi_B\left(\frac{\alpha_1}{2}\right)\right),0\right)+\delta\right\}\right]\quad(\iota=2,3).
\end{align*}
On the other hand, we know
\[\left|\sum_{k\in I_4}A_k(B;\bm{j})\right|\geq\exp\left[\frac{r}{4\pi}\{\mathrm{Vol}(M_{\bm{\alpha}}(B))-\delta\}\right].\]
Therefore,
\begin{align*}
\frac{\left|\sum_{k\in I_\iota}A_k(B;\bm{j})\right|}{\left|\sum_{k\in I_4}A_k(B;\bm{j})\right|}&\leq\#I_\iota\exp\left[\frac{r}{4\pi}\left\{\max\left(2\,\mathrm{Im}\left(\Phi_B\left(\frac{\alpha_1}{2}\right)\right),0\right)+2\delta-\mathrm{Vol}(M_{\bm{\alpha}}(B))\right\}\right]
\end{align*}
tends to zero.
\end{proof}

We first assume that 
$\alpha_i\neq 0$ for $i=1,2,3$. The cases with zero cone angle(s) follow similarly by degeneration (for the complete case, see also \cite{DKY2018}). To prove Theorem \ref{MainTheorem2}, we consider the evaluation of the absolute value of the alternating summation $\left|\sum_{k\in I_1}A_k(B;\bm{j})\right|$. By putting $x_0=\pi-\arctan T_A$, we suppose that $\mathrm{Im}\left(\Phi_B\left(\frac{\alpha_{1}}{2}\right)\right)<\mathrm{Im}\left(\Phi_B\left(x_0\right)\right)$ holds. Choose $U_B$ so that $\max\left\{\mathrm{Im}\left(\Phi_B\left(\frac{\alpha_{1}}{2}\right)\right),0\right\}<U_B<\mathrm{Im}\left(\Phi_B\left(x_0\right)\right)$. We take the interval $I(\bm{\alpha})=\left[0,\frac{\alpha_1}{2}\right]$ and its subintervals $I'(\bm{\alpha})=\left[\varepsilon,\frac{\alpha_1}{2}-\varepsilon\right]$ and $I''(\bm{\alpha})=\left[2\varepsilon,\frac{\alpha_1}{2}-2\varepsilon\right]$ for a small $\varepsilon>0$ such that $\mathrm{Im}(\Phi_B(x))<U_B$ for every $x\in I(\bm{\alpha})\backslash I''(\bm{\alpha})$, where $\bm{\alpha}=(\alpha_1,\alpha_2,\alpha_3)$ is the multi-index of the cone angles. Let $g_{\bm{\alpha}}$ be a smooth function on $\mathbb{R}$ such that $g_{\bm{\alpha}}(t)=0$ if $t$ is outside $I'(\bm{\alpha})$ and $g_{\bm{\alpha}}(t)=1$ if $t$ is in $I''(\bm{\alpha})$. 

Using the quantum dilogarithm function, we can rewrite the summation as
\begin{align*}
\sum_{k\in I_1}A_k(B;\bm{j})
&=\sum_{k\in I_1}\frac{1}{\{1\}}\exp\left[\frac{r}{2\pi\sqrt{-1}}\left\{-\pi\frac{2\pi k}{r}-\frac{2\pi^2}{r}+\sum_{i=1}^3\left(4\frac{2\pi j_i}{r}\frac{2\pi k}{r}+2\frac{2\pi}{r}\frac{2\pi j_i}{r}+2\frac{2\pi}{r}\frac{2\pi k}{r}\right)\right.\right.\\
&\hspace{20em}-3\left(\frac{2\pi k}{r}\right)^2-5\frac{2\pi}{r}\frac{2\pi k}{r}+\frac{4\pi^2}{r^2}\\
&\hspace{13em}+\sum_{i=1}^3\left(-\frac{2\pi\sqrt{-1}}{r}\varphi_r\left(\frac{2\pi(2j_i+1+k)}{r}+\frac{\pi}{r}-b_i\pi\right)\right.\\
&\hspace{17em}+\left.\frac{2\pi\sqrt{-1}}{r}\varphi_r\left(\frac{2\pi(2j_i-k)}{r}+\frac{\pi}{r}-b_i\pi\right)\right)\\
&\hspace{16em}-2\frac{2\pi\sqrt{-1}}{r}\varphi_r\left(\frac{2\pi k}{r}+\frac{\pi}{r}\right)\\
&\hspace{20em}\left.\left.+\,2\frac{2\pi\sqrt{-1}}{r}\varphi_r\left(\frac{2\pi(2k+1)}{r}+\frac{\pi}{r}\right)\right\}\right],
\end{align*}
where each $b_i$ is either 0 or 1, depending on the range of the indices of the Pochhammer symbol. We put 
\begin{align*}
\Phi_B^r\left(\frac{2\pi k}{r}\right)&=-\pi\frac{2\pi k}{r}+4\sum_{i=1}^3\frac{2\pi j_i}{r}\frac{2\pi k}{r}-3\left(\frac{2\pi k}{r}\right)^2+2\frac{2\pi}{r}\sum_{i=1}^3\frac{2\pi j_i}{r}+\frac{2\pi}{r}\frac{2\pi k}{r}-\frac{2\pi^2}{r}+\frac{4\pi^2}{r^2}\\
&\hspace{3em}+\sum_{i=1}^3\left\{-\frac{2\pi\sqrt{-1}}{r}\varphi_r\left(\frac{2\pi(2j_i+1+k)}{r}+\frac{\pi}{r}-b_i\pi\right)\right.\\
&\hspace{8em}\left.+\frac{2\pi\sqrt{-1}}{r}\varphi_r\left(\frac{2\pi(2j_i-k)}{r}+\frac{\pi}{r}-b_i\pi\right)\right\}\\
&\hspace{10em}-2\frac{2\pi\sqrt{-1}}{r}\varphi_r\left(\frac{2\pi k}{r}+\frac{\pi}{r}\right)+2\frac{2\pi\sqrt{-1}}{r}\varphi_r\left(\frac{2\pi(2k+1)}{r}+\frac{\pi}{r}\right).
\end{align*}
Moreover, for a sufficiently large $r$, we regard the sequence of the functions as
\begin{align*}
\Phi_B^r\left(z\right)&=\sum_{i=1}^3\left\{-\frac{2\pi\sqrt{-1}}{r}\varphi_r\left((1-b_i)\pi+\frac{\alpha_i}{2}+z+\frac{3\pi}{r}\right)+\frac{2\pi\sqrt{-1}}{r}\varphi_r\left((1-b_i)\pi+\frac{\alpha_i}{2}-z+\frac{\pi}{r}\right)\right\}\\
&\hspace{4em}-2\frac{2\pi\sqrt{-1}}{r}\varphi_r\left(z+\frac{\pi}{r}\right)+2\frac{2\pi\sqrt{-1}}{r}\varphi_r\left(2z+\frac{3\pi}{r}\right)+(\alpha_1+\alpha_2+\alpha_3+5\pi)z-3z^2.
\end{align*}
Recall that $\frac{2\pi\sqrt{-1}}{r}\varphi_r(z)$ uniformly converges to $\frac{1}{2}\mathrm{Li}_2(e^{2\sqrt{-1}z})$ in a suitable domain; consequently, $\Phi_B^r(z)$ uniformly converges to $\Phi_B(z)$ there. Let $h_{\bm{\alpha},r}(x)=g_{\bm{\alpha}}\left(\frac{2\pi x}{r}\right)\exp\left[\frac{r}{2\pi\sqrt{-1}}\Phi_B^r\left(\frac{2\pi x}{r}\right)\right]$.

\begin{lemma}\label{PoiB}
The function $h_{\bm{\alpha},r}(x)$ is a rapidly decreasing function. Therefore, it holds that
\[\sum_{m\in\mathbb{Z}}h_{\bm{\alpha},r}(m)=\sum_{m\in\mathbb{Z}}\widehat{h_{\bm{\alpha},r}}(m).\]
\end{lemma}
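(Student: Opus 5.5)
The plan is to show that $h_{\bm{\alpha},r}$ is smooth with compact support, hence a Schwartz function, and then apply the classical Poisson summation formula recalled before Lemma \ref{PoiE}. The argument mirrors the one for Lemma \ref{PoiE}.

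The support is immediate. By construction $g_{\bm{\alpha}}$ vanishes outside $I'(\bm{\alpha})=\left[\varepsilon,\frac{\alpha_1}{2}-\varepsilon\right]$. So $h_{\bm{\alpha},r}(x)=g_{\bm{\alpha}}\left(\frac{2\pi x}{r}\right)\exp\left[\frac{r}{2\pi\sqrt{-1}}\Phi_B^r\left(\frac{2\pi x}{r}\right)\right]$ vanishes for $x$ outside the compact interval $\frac{r}{2\pi}I'(\bm{\alpha})$.

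The main step is smoothness of the exponential factor on a neighbourhood of this support. For this I check that every argument of $\varphi_r$ in the definition of $\Phi_B^r(z)$ stays inside the strip $-\frac{\pi}{r}<\mathrm{Re}(w)<\pi+\frac{\pi}{r}$, where $\varphi_r$ is holomorphic, for all $z$ in a real neighbourhood of $I'(\bm{\alpha})$.
\begin{itemize}
\item The terms $z+\frac{\pi}{r}$ and $2z+\frac{3\pi}{r}$ lie in $(0,\pi)$, because $0<z<\frac{\alpha_1}{2}<\frac{\pi}{2}$.
\item The terms $(1-b_i)\pi+\frac{\alpha_i}{2}\pm z+O(1/r)$ are handled by the choice of $b_i\in\{0,1\}$, made according to the range of the Pochhammer index, exactly as in the formula for $(s^2)_n$ from \cite{Chen-Murakami2023}. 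With $0<z<\frac{\alpha_1}{2}\le\frac{\alpha_i}{2}$ and $\alpha_i<\pi$, the arguments $\frac{\alpha_i}{2}\pm z$ lie in $[0,\pi)$. Since $z$ stays at distance at least $\varepsilon$ from $0$ and from $\frac{\alpha_1}{2}$, they in fact lie in a compact subset of the open strip for $r$ large.
\item The remaining polynomial part $(\alpha_1+\alpha_2+\alpha_3+5\pi)z-3z^2$ is entire.
\end{itemize}
Hence $\Phi_B^r$ is holomorphic, in particular real-analytic, on a neighbourhood of $I'(\bm{\alpha})$. Then $x\mapsto\exp\left[\frac{r}{2\pi\sqrt{-1}}\Phi_B^r\left(\frac{2\pi x}{r}\right)\right]$ is $C^\infty$ there, and multiplying by the smooth cutoff gives $h_{\bm{\alpha},r}\in C_c^\infty(\mathbb{R})$.

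A compactly supported smooth function is rapidly decreasing together with all its derivatives. Its Fourier transform $\widehat{h_{\bm{\alpha},r}}$ is therefore also rapidly decreasing, since integrating by parts $N$ times gives $|\widehat{h_{\bm{\alpha},r}}(m)|\le C_N|m|^{-N}$. Both sides of the Poisson summation formula then converge absolutely, and the identity $\sum_{m}h_{\bm{\alpha},r}(m)=\sum_m\widehat{h_{\bm{\alpha},r}}(m)$ follows.

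The only point needing care is the bookkeeping of the $b_i$ and the $O(1/r)$ shifts: the arguments must not hit the boundary of the holomorphy strip, where the integrand of $\varphi_r$ fails to decay. The margin $\varepsilon$ built into $I'(\bm{\alpha})$ provides this for all sufficiently large $r$.
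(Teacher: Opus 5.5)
Your proposal is correct and matches the paper's implicit reasoning. The paper states Lemma \ref{PoiB} without a separate proof, relying on the cutoff $g_{\bm{\alpha}}$ to make $h_{\bm{\alpha},r}$ a smooth compactly supported function, to which the classical Poisson summation formula applies; your check that every argument of $\varphi_r$ stays a fixed distance inside the holomorphy strip near $I'(\bm{\alpha})$ (with $b_i$ chosen, as in the $(s^2)_n$ formula, so the shifted argument lies in $(0,\pi)$) supplies exactly the detail the paper leaves unstated.
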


To use the Poisson summation formula by considering the Fourier coefficients
\[\widehat{h_{\bm{\alpha},r}}(m)=\int_\mathbb{R}h_{\bm{\alpha},r}(x)e^{-2\pi mx\sqrt{-1}}dx,\]
we prove the following lemma.
\begin{lemma}\label{evaB}
There exists a constant $M>0$ such that
\[\left|\sum_{m\in\mathbb{Z}}\widehat{h_{\bm{\alpha},r}}(m)\right|\leq Mre^{\frac{r}{2\pi}U_B}\]
for a sufficiently large $r$.
\end{lemma}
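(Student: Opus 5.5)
We follow the proof of Lemma \ref{evaE} essentially verbatim, with $\Phi_E^r$ replaced by $\Phi_B^r$ and $I(\alpha)$ replaced by $I(\bm{\alpha})=\left[0,\frac{\alpha_1}{2}\right]$.

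\emph{Case $m\neq0$.} We substitute $x\mapsto \frac{r}{2\pi}x$ in $\widehat{h_{\bm{\alpha},r}}(m)$ and integrate by parts twice against $e^{-mrx\sqrt{-1}}$. This produces the prefactor $\frac{1}{2\pi m^2}$ and an integrand built from $g_{\bm{\alpha}}''/r$, $g_{\bm{\alpha}}'{\Phi_B^r}'$, $g_{\bm{\alpha}}{\Phi_B^r}''$ and $\frac{r}{2\pi\sqrt{-1}}g_{\bm{\alpha}}({\Phi_B^r}')^2$, all multiplied by $\exp\left[\frac{r}{2\pi\sqrt{-1}}\{\Phi_B^r(x)+2\pi mx\}\right]$. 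The derivatives of $\Phi_B^r$ are bounded uniformly in $r$ on compact sets avoiding the branch cuts. This follows from the uniform convergence of $\frac{2\pi\sqrt{-1}}{r}\varphi_r$ to $\frac12\mathrm{Li}_2$ and of its derivatives, which holds by holomorphy and Cauchy estimates. We therefore obtain the analogue of \eqref{nonzeroevE}:
\[|\widehat{h_{\bm{\alpha},r}}(m)|\leq\frac{M'r}{2\pi m^2}\int\exp\left[\frac{r}{2\pi}\mathrm{Im}(\Phi_B^r(x)+2\pi mx)\right]|dx|,\]
valid on any contour that keeps $I'(\bm{\alpha})\backslash I''(\bm{\alpha})$ on the real axis and deforms only the core $I''(\bm{\alpha})$. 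On the core $g_{\bm{\alpha}}\equiv1$, so the integrand there is holomorphic.

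\emph{Choice of contours.} We construct paths $C_{-1}(\bm{\alpha})$ in the closed first quadrant and $C_0(\bm{\alpha})$ in the closed fourth quadrant, deforming $I''(\bm{\alpha})$ with fixed endpoints. They are to satisfy $\mathrm{Im}(\Phi_B(z)-2\pi z)<U_B$ on $C_{-1}(\bm{\alpha})$ and $\mathrm{Im}(\Phi_B(z))<U_B$ on $C_0(\bm{\alpha})$. Since $\Phi_B^r\to\Phi_B$ uniformly near these compact paths, the same strict inequalities hold for $\Phi_B^r$ once $r$ is large.

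For $m\leq-1$ we use $C_{-1}(\bm{\alpha})$. Because $\mathrm{Im}(z)\geq0$ there, $\mathrm{Im}(2\pi mz)\leq\mathrm{Im}(-2\pi z)$, which gives $|\widehat{h_{\bm{\alpha},r}}(m)|\leq\frac{M'r}{2\pi m^2}\ell(C_{-1})e^{\frac{r}{2\pi}U_B}$. For $m\geq1$ we use $C_0(\bm{\alpha})$, where $\mathrm{Im}(z)\leq0$, and get the same bound with a constant $M''$. For $m=0$ we do not integrate by parts. We split the integral into the core, deformed to $C_0(\bm{\alpha})$, and the transition part on the real axis, where $\mathrm{Im}\,\Phi_B<U_B$ by the choice of $\varepsilon$. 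This gives $|\widehat{h_{\bm{\alpha},r}}(0)|\leq\frac{r}{2\pi}\{\ell(C_0)+2\varepsilon\}e^{\frac{r}{2\pi}U_B}$. Summing over $m$ with $\sum m^{-2}<\infty$ then yields the constant $M$.

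\emph{Main obstacle.} The essential difficulty is the existence of $C_{-1}(\bm{\alpha})$ and $C_0(\bm{\alpha})$, the analogue of Lemma \ref{pathsE}. Compared with $E$, $\Phi_B$ has additional branch cuts emanating downward from $\frac{\pi n}{2}$ and $\pi n$, in particular from $z=0$. There is also a quadratic term $-3z^2$, whose imaginary part $-6xy$ helps in the fourth quadrant but hurts in the first.

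To handle this, we would first check on the real segment that $\mathrm{Im}\,\Phi_B<U_B$ can be improved by moving off the axis. We expand $\partial_y\mathrm{Im}\,\Phi_B(x+\sqrt{-1}y)|_{y=0}=\mathrm{Re}\,\Phi_B'(x)$ and use that $\mathrm{Re}\,\Phi_B'$ has a definite sign on $(0,\frac{\alpha_1}{2})$. For $C_0$, this sign is the one obtained from the logarithmic terms $\log|1-e^{2\sqrt{-1}(\cdot)}|$. For $C_{-1}$, we use the shift by $-2\pi$, which makes $\mathrm{Re}(\Phi_B'(x)-2\pi)$ negative. This should allow pushing the core a small distance $y$ into the correct half plane, with a strict decrease of the imaginary part at first order.

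Near the endpoints $2\varepsilon$ and $\frac{\alpha_1}{2}-2\varepsilon$, the values on the real axis are already below $U_B$ by continuity. So small bumps suffice there, and they stay away from the cuts at $0$, at $\pm\frac{\alpha_i}{2}$ and at $\frac{\pi}{2}$, since the contour remains strictly inside $\mathrm{Re}(z)\in(0,\frac{\alpha_1}{2})$.

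Where the first-order argument is not uniform, we would instead follow level curves of $\mathrm{Im}(\Phi_B(z)+2\pi mz)$, as in Figure \ref{FigPathE}, and verify numerically that they connect the endpoints. This is deferred to Section \ref{lemmaproofs}, as for $E$.
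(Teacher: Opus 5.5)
There is a genuine gap. You copied the split of Fourier modes from the figure-eight case ($m\le -1$ pushed into the upper half-plane, $m\ge 0$ into the lower). But that split is dictated by $\mathrm{Re}\,\Phi'$ on the segment, and for $\Phi_B$ it is different.

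On $0<x<\frac{\alpha_1}{2}$, compute the terms of $\mathrm{Re}\,\Phi_B'(x)$ separately:
\begin{itemize}
\item each pair $-\frac12\mathrm{Li}_2(e^{\sqrt{-1}(\alpha_i+2z)})+\frac12\mathrm{Li}_2(e^{\sqrt{-1}(\alpha_i-2z)})$ contributes $\pi-\alpha_i$;
\item $-\mathrm{Li}_2(e^{2\sqrt{-1}z})$ contributes $\pi-2x$;
\item $\mathrm{Li}_2(e^{4\sqrt{-1}z})$ contributes $8x-2\pi$;
\item the polynomial part contributes $\alpha_1+\alpha_2+\alpha_3+5\pi-6x$.
\end{itemize}
These come from arguments of $1-e^{\sqrt{-1}\theta}$, not from $\log|\cdot|$ terms. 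The total is $\mathrm{Re}\,\Phi_B'(x)=7\pi$, so
\[
\partial_v\,\mathrm{Im}\bigl(\Phi_B(u+\sqrt{-1}v)+2\pi m(u+\sqrt{-1}v)\bigr)\Big|_{v=0}=(7+2m)\pi .
\]
For $m=-1$ this is $+5\pi$, not negative as you claim: pushing into the first quadrant increases the exponent. For comparison, the same computation gives $\mathrm{Re}\,\Phi_E'=\pi$, which is why the $-1/0$ split works for $E$.

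This is not only a first-order effect. In the upper half-plane the inversion formula gives
\[
\Phi_B(z)=D(w)+8\pi z+c,\qquad w=e^{2\sqrt{-1}z},
\]
where $D$ is a combination of $\mathrm{Li}_2(e^{\pm\sqrt{-1}\alpha_i}w)$, $\mathrm{Li}_2(w)$, $\mathrm{Li}_2(w^2)$ and $c$ is a real constant. The three $z^2$ terms from the inversions cancel $-3z^2$ exactly, so your remark about $-6xy$ is beside the point. Consequently $\partial_v\mathrm{Im}(\Phi_B(z)-2\pi z)$ equals $6\pi$ minus a weighted sum of arguments of $1-\zeta$ with $|\zeta|<1$, of total weight $12$. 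It is therefore strictly positive for all $v\ge 0$. At every $u\in I''(\bm{\alpha})$ the real-axis value is the minimum over the vertical ray. Any path in the closed upper half-plane joining the endpoints of $I''(\bm{\alpha})$ must cross each such ray, so its maximum is at least $\max_{I''(\bm{\alpha})}\mathrm{Im}\,\Phi_B$. Hence $C_{-1}(\bm{\alpha})$ exists only when no deformation was needed, and your bounds for $m\le-1$ (in particular $m=-1,-2,-3$) fail. Your treatment of $m\ge 0$ via a fourth-quadrant contour is fine.

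The missing idea is the correct bookkeeping of the linear term. In the lower half-plane the same inversion gives $\Phi_B(z)=\tilde{D}(w)+6\pi z+\text{const}$ with $w=e^{-2\sqrt{-1}z}\to 0$. The paper therefore uses two contours:
\begin{itemize}
\item $C_{-3}(\bm{\alpha})$ in the fourth quadrant, with $\mathrm{Im}(\Phi_B(z)-6\pi z)<U_B$. It controls every $m\ge -3$, including $m=-2,-1,0$, since $\mathrm{Im}(2\pi m z)\le \mathrm{Im}(-6\pi z)$ when $\mathrm{Im}\,z\le 0$.
\item $C_{-4}(\bm{\alpha})$ in the first quadrant, with $\mathrm{Im}(\Phi_B(z)-8\pi z)<U_B$, for $m\le -4$.
\end{itemize}
Both exist because, with these shifts, the linear terms cancel and the imaginary part tends to $0$ as $v\to\pm\infty$ uniformly on $I''(\bm{\alpha})$. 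Since $U_B>0$ by construction, this suffices. With the split at $-4/-3$ instead of $-1/0$, the rest of your argument goes through as written: the double integration by parts, the $m^{-2}$ summation, and the core/transition treatment of $m=0$.
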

\begin{proof}
We similarly have
\begin{align*}
\widehat{h_{\bm{\alpha},r}}(m)&=\frac{1}{2\pi m^2}\int_\mathbb{R}\left[\frac{1}{r}g_{\bm{\alpha}}''(x)+\frac{1}{2\pi\sqrt{-1}}\left\{2g_{\bm{\alpha}}'(x){\Phi_B^r}'(x)+g_{\bm{\alpha}}(x){\Phi_B^r}''(x)+\frac{r}{2\pi\sqrt{-1}}g_{\bm{\alpha}}(x){\Phi_B^r}'(x)^2\right\}\right]\\
&\hspace{23.5em}\times\exp\left[\frac{r}{2\pi\sqrt{-1}}\{\Phi_B^r(x)+2\pi mx\}\right]dx
\end{align*}
for $m\neq 0$ and 
\[\widehat{h_{\bm{\alpha},r}}(0)=\frac{r}{2\pi}\int_{I(\bm{\alpha})}g_{\bm{\alpha}}(x)\exp\left[\frac{r}{2\pi\sqrt{-1}}\Phi_B^r(x)\right]dx.\]
On $I''(\bm{\alpha})$, we have $g_{\bm{\alpha}}(x)=1$ and $g'_{\bm{\alpha}}(x)=g''_{\bm{\alpha}}(x)=0$. Hence, the integral of the core part obtained after the integrations by parts is holomorphic in a neighborhood of the contours considered below. We deform only the core part, while the transition part is kept on the real axis. By the choice of $\varepsilon$, the uniform convergence of $\Phi_B^r$ to $\Phi_B$, and compactness, the transition part is bounded by a polynomial factor in $r$ times $e^{\frac{r}{2\pi}U_B}$. In particular, there also exists a constant $M'>0$ which does not depend on $r$ such that
\begin{align}
|\widehat{h_{\bm{\alpha},r}}(m)|\leq&\,\frac{M'r}{2\pi m^2}\int_{I(\bm{\alpha})}\exp\left[\frac{r}{2\pi}\mathrm{Im}(\Phi_B^r(x)+2\pi mx)\right]dx\quad(m\neq0).\label{nonzeroevB}
\end{align}

Since $\Phi_B^r(z)$ uniformly converges to $\Phi_B(z)$, assume that $r$ is sufficiently large. Moreover, we use a suitable part of a contour of $\mathrm{Im}(\Phi_B(x)+2\pi mx)$ to deform the path of integration. Let $C_{-4}(\bm{\alpha})$ denote the path obtained from $I(\bm{\alpha})$ by keeping $I'(\bm{\alpha})\backslash I''(\bm{\alpha})$ on the real axis and deforming only $I''(\bm{\alpha})$ into the first quadrant so that $\mathrm{Im}(\Phi_B(x) - 8\pi x)<U_B$. Similarly, let $C_{-3}(\bm{\alpha})$ denote the path obtained from $I(\bm{\alpha})$ by keeping $I'(\bm{\alpha})\backslash I''(\bm{\alpha})$ on the real axis and deforming only $I''(\bm{\alpha})$ into the fourth quadrant so that $\mathrm{Im}(\Phi_B(x) - 6\pi x)<U_B$. The paths $C_{-4}(\bm{\alpha})$ and $C_{-3}(\bm{\alpha})$ are shown in Figure \ref{FigPathB}.
\begin{lemma}\label{pathsB}
The paths $C_{-4}(\bm{\alpha})$ and $C_{-3}(\bm{\alpha})$ exist. Moreover, $C_{-4}(\bm{\alpha})$ lies in the first quadrant or on the real axis and $C_{-3}(\bm{\alpha})$ lies in the fourth quadrant or on the real axis.
\end{lemma}
A proof of Lemma \ref{pathsB} is also given in Section \ref{lemmaproofs}.
\begin{figure}[htbp]
\centering\includegraphics[width=7.35cm]{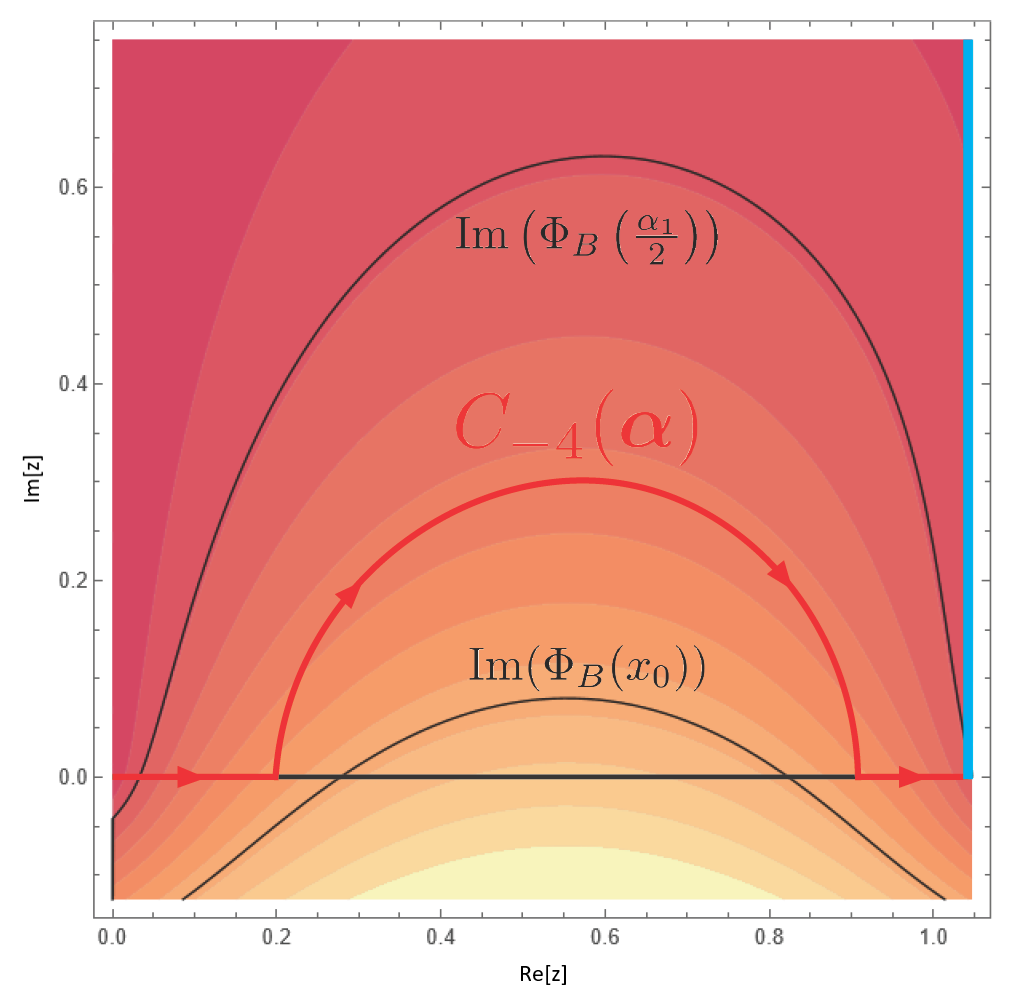}
\centering\includegraphics[width=7.35cm]{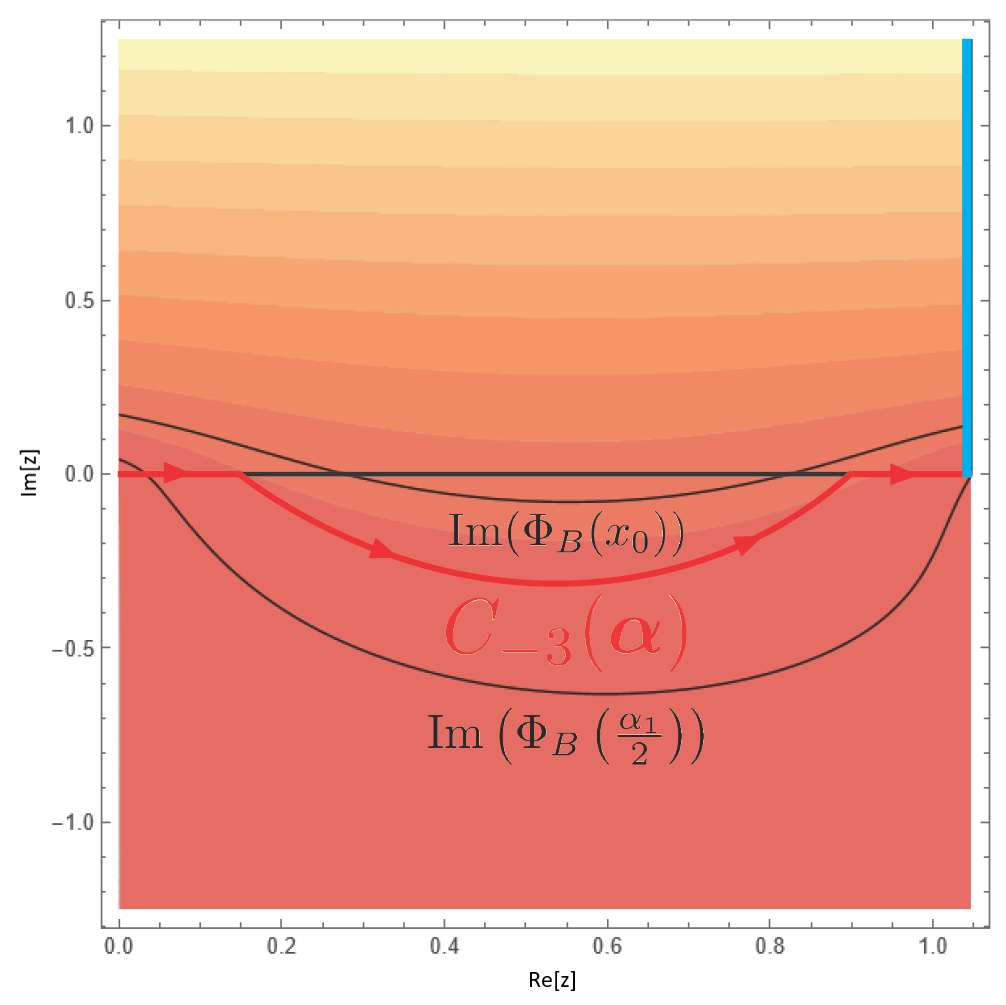}
\caption{The paths of integration along the contours and the real axis: $C_{-4}(\bm{\alpha})$ (left) and $C_{-3}(\bm{\alpha})$ (right) at $\bm{\alpha}=\left(\frac{16\pi}{24},\frac{17\pi}{24},\frac{18\pi}{24}\right)$. The red oriented lines are the integration paths and the blue lines are the branch cuts. The black curves indicate the level sets of $\mathrm{Im}(\Phi_B(x)-8\pi x)$ and $\mathrm{Im}(\Phi_B(x)-6\pi x)$.}\label{FigPathB}
\end{figure}

From \eqref{nonzeroevB} and Lemma \ref{pathsB}, we get
\[|\widehat{h_{\bm{\alpha},r}}(-4)|\leq\frac{M'r}{2\pi\cdot(-4)^2}\int_{C_{-4}({\bm\alpha})}\exp\left[\frac{r}{2\pi}\mathrm{Im}(\Phi_B^r(x)-8\pi x)\right]dx\leq\frac{M'r}{2\pi\cdot(-4)^2}\ell(C_{-4}({\bm\alpha}))e^{\frac{r}{2\pi}U_B}\]
and 
\[|\widehat{h_{\bm{\alpha},r}}(-3)|\leq\frac{M'r}{2\pi\cdot(-3)^2}\int_{C_{-3}({\bm\alpha})}\exp\left[\frac{r}{2\pi}\mathrm{Im}(\Phi_B^r(x)-6\pi x)\right]dx\leq\frac{M'r}{2\pi\cdot(-3)^2}\ell(C_{-3}({\bm\alpha}))e^{\frac{r}{2\pi}U_B}.\]
Furthermore, since $\mathrm{Im}(x)\geq0$ for $x$ in $C_{-4}(\bm{\alpha})$, $\mathrm{Im}(\Phi_B(x)+2\pi mx)\leq\mathrm{Im}(\Phi_B(x)-8\pi x)$ for $m<-4$, we obtain
\begin{equation}
|\widehat{h_{\bm{\alpha},r}}(m)|\leq\frac{M'r}{2\pi m^2}\ell(C_{-4}({\bm\alpha}))e^{\frac{r}{2\pi}U_B}\quad(m<-4).\label{nonzeroB}
\end{equation}
Similarly, since $\mathrm{Im}(x)\leq 0$ in the fourth quadrant or on the real axis, $\mathrm{Im}(\Phi_B(x)+2\pi mx)\leq\mathrm{Im}(\Phi_B(x)-6\pi x)$ for $m>-3$, we also obtain
\begin{equation}
|\widehat{h_{\bm{\alpha},r}}(m)|\leq\frac{M'r}{2\pi m^2}\ell(C_{-3}({\bm\alpha}))e^{\frac{r}{2\pi}U_B}\quad(m>-3,\,m\neq 0).\label{zeroB}
\end{equation}
Note that we also consider the evaluation of $|\widehat{h_{\bm{\alpha},r}}(0)|$ from 
\begin{equation*}
\widehat{h_{\bm{\alpha},r}}(0)=\frac{r}{2\pi}\left\{\int_{I''(\bm{\alpha})}\exp\left[\frac{r}{2\pi\sqrt{-1}}\Phi_B^r(x)\right]dx+\int_{I'(\bm{\alpha})\backslash I''(\bm{\alpha})}g_{\bm{\alpha}}(x)\exp\left[\frac{r}{2\pi\sqrt{-1}}\Phi_B^r(x)\right]dx\right\}
\end{equation*}
similarly.

From \eqref{nonzeroB}, \eqref{zeroB}, and the estimate for $m=0$
, there exists $M>0$ such that 
\[\left|\sum_{m\in\mathbb{Z}}\widehat{h_{\bm{\alpha},r}}(m)\right|\leq Mre^{\frac{r}{2\pi}U_B}.\]
\end{proof}

We now complete the proof of Theorem \ref{MainTheorem2}.
\begin{proof}[Proof of Theorem \ref{MainTheorem2}]
First, 
\[\left|\sum_{k\in I_1}A_k(B;\bm{j})\right|=\frac{1}{|\{1\}|}\left|\sum_{k\in I_1}\exp\left[\frac{r}{2\pi\sqrt{-1}}\Phi_B^r\left(\frac{2\pi k}{r}\right)\right]\right|\]
and $\lim_{\substack{r\to\infty\\r:\mathrm{odd}}}\frac{4\pi}{r}\log\frac{1}{|\{1\}|}=0.$ For a sufficiently large $r$, we have
\begin{align*}
&\left|\sum_{k\in I_1}\exp\left[\frac{r}{2\pi\sqrt{-1}}\Phi_B^r\left(\frac{2\pi k}{r}\right)\right]\right|\\
&=\left|\sum_{m\in\mathbb{Z}}h_{\bm{\alpha},r}(m)+\sum_{k\in \left.I_1\middle\backslash\left(\mathbb{Z}\cap \frac{r}{2\pi}I''(\bm{\alpha})\right)\right.}(1-g_{\bm{\alpha}})\left(\frac{2\pi k}{r}\right)\exp\left[\frac{r}{2\pi\sqrt{-1}}\Phi_B^r\left(\frac{2\pi k}{r}\right)\right]\right|\\
&\leq\left|\sum_{m\in\mathbb{Z}}h_{\bm{\alpha},r}(m)\right|+\sum_{k\in \left.I_1\middle\backslash\left(\mathbb{Z}\cap \frac{r}{2\pi}I''(\bm{\alpha})\right)\right.}\exp\left[\frac{r}{2\pi}\mathrm{Im}\left(\Phi_B^r\left(\frac{2\pi k}{r}\right)\right)\right]\\
&\hspace{1em}\leq\left|\sum_{m\in\mathbb{Z}}h_{\bm{\alpha},r}(m)\right|+\#\left.I_1\middle\backslash\left(\mathbb{Z}\cap \frac{r}{2\pi}I''(\bm{\alpha})\right)\right.e^{\frac{r}{2\pi}U_B}.
\end{align*}
Furthermore, we also have
\[\left|\sum_{m\in\mathbb{Z}}h_{\bm{\alpha},r}(m)\right|=\left|\sum_{m\in\mathbb{Z}}\widehat{h_{\bm{\alpha},r}}(m)\right|\leq Mre^{\frac{r}{2\pi}U_B}\]
from Lemma \ref{PoiB} and Lemma \ref{evaB}. Thus, we know
\begin{equation}
\frac{\left|\sum_{k\in I_1}A_k(B;\bm{j})\right|}{\left|\sum_{k\in I_4}A_k(B;\bm{j})\right|}\leq Mr^2e^{\frac{r}{2\pi}(U_B-\mathrm{Im}(\Phi_B(x_0)))}\to0\quad(r\to\infty,r:\mathrm{odd})\label{expsmallB}
\end{equation}
since we assume that $U_B<\mathrm{Im}\left(\Phi_B\left(x_0\right)\right)$ and Lemma \ref{constsgn} holds. Now the inequality
\begin{align}
&\left|\sum_{k\in I_4}A_k(B;\bm{j})\right|-\left|\sum_{k\in I_1}A_k(B;\bm{j})\right|-\left|\sum_{k\in I_2}A_k(B;\bm{j})\right|-\left|\sum_{k\in I_3}A_k(B;\bm{j})\right|\notag\\
&\hspace{1em}\leq\left|\sum_{k\in I_4}A_k(B;\bm{j})\right|-\left|\sum_{k\in I_1\cup I_2\cup I_3}A_k(B;\bm{j})\right|\notag\\
&\hspace{2em}\leq\left|V_{\bm{j}}^{(r)}(B)\right|\leq\left|\sum_{k\in I_1}A_k(B;\bm{j})\right|+\left|\sum_{k\in I_2}A_k(B;\bm{j})\right|+\left|\sum_{k\in I_3}A_k(B;\bm{j})\right|+\left|\sum_{k\in I_4}A_k(B;\bm{j})\right|\label{triangleB}
\end{align}
holds, and we have
\begin{align}
&\lim_{\substack{r\to\infty\\r:\mathrm{odd}}}\frac{4\pi}{r}\log\left|\left|\sum_{k\in I_4}A_k(B;\bm{j})\right|\pm\left|\sum_{k\in I_1}A_k(B;\bm{j})\right|\pm\left|\sum_{k\in I_2}A_k(B;\bm{j})\right|\pm\left|\sum_{k\in I_3}A_k(B;\bm{j})\right|\right|\notag\\
&\,=\lim_{\substack{r\to\infty\\r:\mathrm{odd}}}\frac{4\pi}{r}\left\{\log\left|\sum_{k\in I_4}A_k(B;\bm{j})\right|\right.\notag\\
&\hspace{6em}\left.+\log\left|1\pm\frac{\left|\sum_{k\in I_1}A_k(B;\bm{j})\right|}{\left|\sum_{k\in I_4}A_k(B;\bm{j})\right|}\pm\frac{\left|\sum_{k\in I_2}A_k(B;\bm{j})\right|}{\left|\sum_{k\in I_4}A_k(B;\bm{j})\right|}\pm\frac{\left|\sum_{k\in I_3}A_k(B;\bm{j})\right|}{\left|\sum_{k\in I_4}A_k(B;\bm{j})\right|}\right|\right\}\notag\\
&\,=\mathrm{Vol}(M_{\bm{\alpha}}(B))\notag\\
&\hspace{3em}+\lim_{\substack{r\to\infty\\r:\mathrm{odd}}}\frac{4\pi}{r}\log\left|1\pm\frac{\left|\sum_{k\in I_1}A_k(B;\bm{j})\right|}{\left|\sum_{k\in I_4}A_k(B;\bm{j})\right|}\pm\frac{\left|\sum_{k\in I_2}A_k(B;\bm{j})\right|}{\left|\sum_{k\in I_4}A_k(B;\bm{j})\right|}\pm\frac{\left|\sum_{k\in I_3}A_k(B;\bm{j})\right|}{\left|\sum_{k\in I_4}A_k(B;\bm{j})\right|}\right|.\label{lastB}
\end{align}
Then we finally obtain
\[\lim_{\substack{r\to\infty\\r:\mathrm{odd}}}\frac{4\pi}{r}\log|V_{\bm{j}}^{(r)}(B)|=\mathrm{Vol}(M_{\bm{\alpha}}(B))\]
from \eqref{expsmallB}, \eqref{triangleB}, \eqref{lastB}, and Lemma \ref{simplesub}.
\end{proof}

\begin{remark}
For example, setting $\alpha=\alpha_1=\alpha_2=\alpha_3$, the condition $\mathrm{Im}\left(\Phi_B\left(\frac{\alpha}{2}\right)\right)<\mathrm{Im}(\Phi_B(x_0))$ is numerically equivalent to $\alpha<2.8225471590\ldots$ by a computation. On the other hand, the boundary of the region of the cone angles satisfying the condition is shown in Figure \ref{valid_region_B}.
\end{remark}

\begin{figure}[htbp]
\centering\includegraphics[width=11cm]{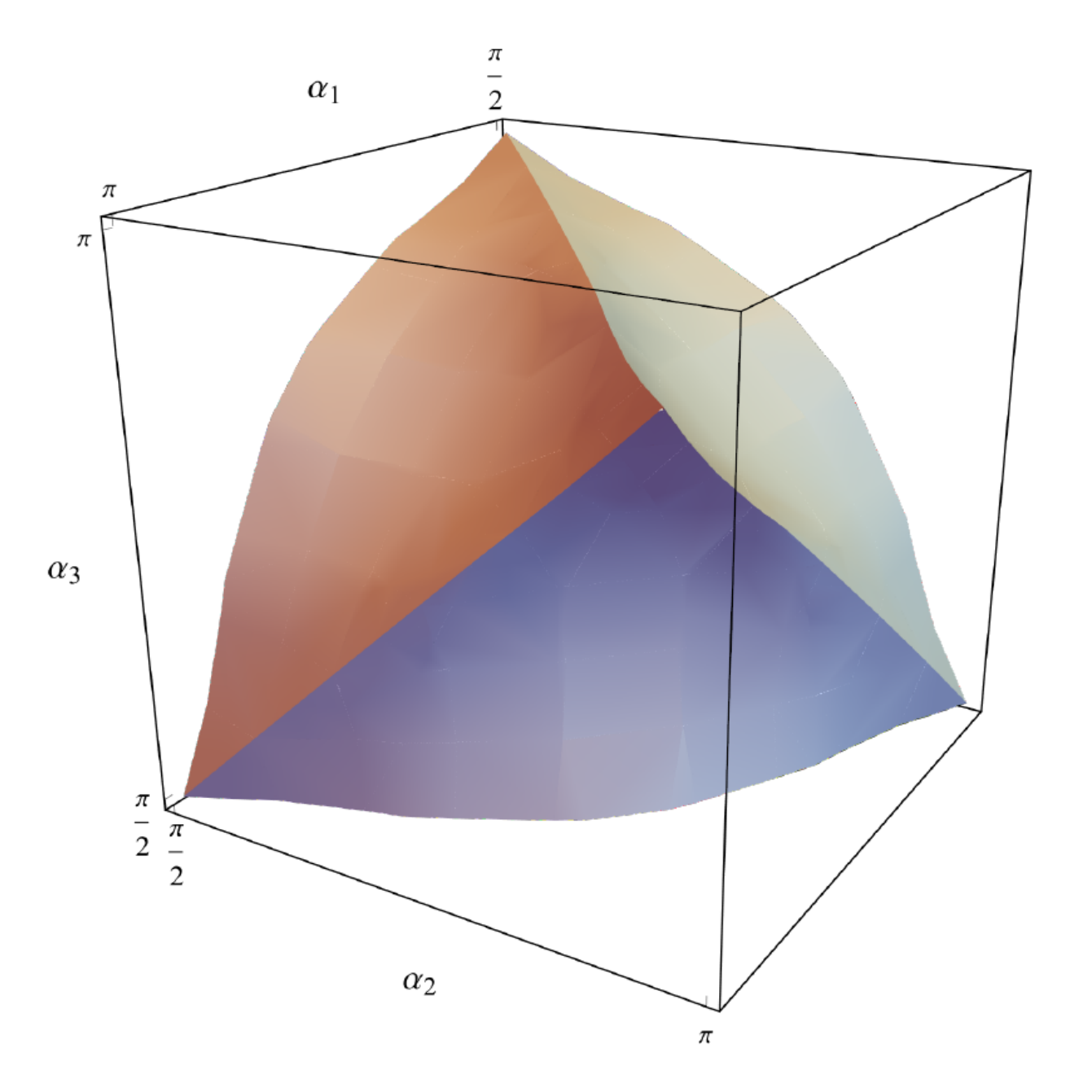}
\caption{A 3-dimensional plot of the boundary of the region of cone angles $\alpha_1, \alpha_2$, and $\alpha_3$ satisfying the condition $\mathrm{Im}\left(\Phi_B\left(\frac{\min\{\alpha_{1},\alpha_2,\alpha_3\}}{2}\right)\right)<\mathrm{Im}(\Phi_B(x_0))$. The axes represent the cone angles in $\left[\frac{\pi}{2}, \pi\right]^3$ for visibility.}\label{valid_region_B}
\end{figure}

\section{Proofs of the lemmata for our main results}\label{lemmaproofs}
In this section, we prove the existence of the contours for evaluations in Section \ref{PMT1} and Section \ref{PMT2}.

First, we prove Lemma \ref{pathsE} for the case of the figure-eight knot $E$. We regard $\mathrm{Im}(\Phi_E(z)+2\pi m z)$ as a real-valued function of two real variables $u$ and $v$, where $z=u+\sqrt{-1}v$. We remark that this function at the origin is always equal to 0. 

\begin{proof}[Proof of Lemma \ref{pathsE}]
Let $f_E(z)=\Phi_E(z)+2\pi mz$. Assume that $0<u<\frac{\alpha}{2}$. Note that $0<\alpha-2u<\frac{2\pi}{3}$ and $0<\alpha+2u<\frac{4\pi}{3}$ away from the real axis along the paths of integration. 

If $w=e^{2\sqrt{-1}z}=e^{-2v}e^{2\sqrt{-1}u}$, then we have
\begin{align*}
\mathrm{Li}_2(e^{\sqrt{-1}\alpha}e^{-2\sqrt{-1}z})&=-\mathrm{Li}_2(e^{-\sqrt{-1}\alpha}e^{2\sqrt{-1}z})-\frac{\pi^2}{6}-\frac{1}{2}\left\{\mathrm{Log}\,(-e^{-\sqrt{-1}\alpha}e^{2\sqrt{-1}z})\right\}^2
\end{align*}
from the inversion formula \eqref{inversion}. Thus, we know
\begin{align*}
f_E(z)&=-\frac{1}{2}\left\{\mathrm{Li}_2(e^{\sqrt{-1}\alpha}w)+\mathrm{Li}_2(e^{-\sqrt{-1}\alpha}w)\right\}+z^2+(2m+1)\pi z+(\text{real constant}),
\end{align*}
where $w=e^{2\sqrt{-1}z}$. In the case where $m=-1$, since
\begin{align*}
\mathrm{Im}(f_E(u+\sqrt{-1}v))|_{m=-1}=(\text{the dilogarithmic part})+(2u-\pi)v
\end{align*}
and $u\leq\frac{\alpha}{2}<\frac{\pi}{3}$, it holds that $\mathrm{Im}(f_E(u+\sqrt{-1}v))|_{m=-1}$ tends to $-\infty$ when $v\to\infty$.

On the other hand, if $w=e^{-2\sqrt{-1}z}=e^{2v}e^{-2\sqrt{-1}u}$, then we have
\begin{align*}
\mathrm{Li}_2(e^{\sqrt{-1}\alpha}e^{2\sqrt{-1}z})&=-\mathrm{Li}_2(e^{-\sqrt{-1}\alpha}e^{-2\sqrt{-1}z})-\frac{\pi^2}{6}-\frac{1}{2}\left\{\mathrm{Log}\,(-e^{-\sqrt{-1}\alpha}e^{-2\sqrt{-1}z})\right\}^2
\end{align*}
using the inversion formula \eqref{inversion} again. Thus, we know
\begin{align*}
f_E(z)&=\frac{1}{2}\left\{\mathrm{Li}_2(e^{\sqrt{-1}\alpha}w)+\mathrm{Li}_2(e^{-\sqrt{-1}\alpha}w)\right\}-z^2+(2m+1)\pi z+(\text{real constant}),
\end{align*}
where $w=e^{-2\sqrt{-1}z}$. In the case where $m=0$, since
\begin{align*}
\mathrm{Im}(f_E(u+\sqrt{-1}v))|_{m=0}=(\text{the dilogarithmic part})+(-2u+\pi)v
\end{align*}
and $u\leq\frac{\alpha}{2}<\frac{\pi}{3}$, it holds that $\mathrm{Im}(f_E(u+\sqrt{-1}v))|_{m=0}$ tends to $-\infty$ when $v\to-\infty$.

The above convergences are uniform with respect to $u\in I''(\alpha)$. Hence, there exist $V_+>0$ and $V_-<0$ such that $\mathrm{Im}(f_E(u+\sqrt{-1}V_+))|_{m=-1}<U_E$ and $\mathrm{Im}(f_E(u+\sqrt{-1}V_-))|_{m=0}<U_E$ for every $u\in I''(\alpha)$. Therefore, the contour defining $C_{-1}(\alpha)$ (resp. $C_{0}(\alpha)$) can be chosen in the first (resp. the fourth) quadrant or on $I(\alpha)$, with $\mathrm{Im}(f_E(u+\sqrt{-1}v))<U_E$.
\end{proof}

Next, we prove Lemma \ref{pathsB} for the case of the Borromean rings $B$. We also regard $\mathrm{Im}(\Phi_B(z)+2\pi mz)$ as a real-valued function of two real variables $u$ and $v$, where $z=u+\sqrt{-1}v$. Note that this function at the origin is always equal to $0$.

\begin{proof}[Proof of Lemma \ref{pathsB}]
Let $f_B(z)=\Phi_B(z)+2\pi mz$. Assume that $0<u<\frac{\alpha_1}{2}$. Note that $0<\alpha_i-2u<\pi$ and $0<\alpha_i+2u<2\pi$ away from the real axis along the paths of integration.

If $w=e^{2\sqrt{-1}z}=e^{-2v}e^{2\sqrt{-1}u}$, then we have
\begin{align*}
\mathrm{Li}_2(e^{\sqrt{-1}\alpha_i}w^{-1})&=-\mathrm{Li}_2(e^{-\sqrt{-1}\alpha_i}w)-\frac{\pi^2}{6}-\frac{1}{2}\left\{\mathrm{Log}\,(-e^{-\sqrt{-1}\alpha_i}e^{2\sqrt{-1}z})\right\}^2
\end{align*}
from the inversion formula \eqref{inversion}. Thus, we know
\begin{align*}
f_B(z)&=-\frac{1}{2}\sum_{i=1}^3\{\mathrm{Li}_2(e^{\sqrt{-1}\alpha_i}w)+\mathrm{Li}_2(e^{-\sqrt{-1}\alpha_i}w)\}-\mathrm{Li}_2(w)+\mathrm{Li}_2(w^2)\\
&\hspace{24em}+8\pi z+2\pi mz+(\text{real constant}),
\end{align*}
where $w=e^{2\sqrt{-1}z}$. Since $v\to\infty$ implies $w\to 0$, $\mathrm{Im}(f_B(u+\sqrt{-1}v))|_{m=-4}$ tends to 0 when $v\to\infty$.

On the other hand, if $w=e^{-2\sqrt{-1}z}=e^{2v}e^{-2\sqrt{-1}u}$, then we have
\begin{align*}
\mathrm{Li}_2(e^{\sqrt{-1}\alpha_i}w^{-1})&=-\mathrm{Li}_2(e^{-\sqrt{-1}\alpha_i}w)-\frac{\pi^2}{6}-\frac{1}{2}\left\{\mathrm{Log}\,(-e^{-\sqrt{-1}\alpha_i}e^{-2\sqrt{-1}z})\right\}^2
\end{align*}
using the inversion formula \eqref{inversion} again. Thus, we know
\begin{align*}
f_B(z)&=\frac{1}{2}\sum_{i=1}^3\{\mathrm{Li}_2(e^{\sqrt{-1}\alpha_i}w)+\mathrm{Li}_2(e^{-\sqrt{-1}\alpha_i}w)\}+\mathrm{Li}_2(w)-\mathrm{Li}_2(w^2)\\
&\hspace{24em}+6\pi z+2\pi mz+(\text{real constant}),
\end{align*}
where $w=e^{-2\sqrt{-1}z}$. Since $v\to-\infty$ implies $w\to 0$, $\mathrm{Im}(f_B(u+\sqrt{-1}v))|_{m=-3}$ tends to 0 when $v\to-\infty$.

The above convergences are uniform with respect to $u\in I''(\bm{\alpha})$. Since $U_B>0$, there exist $V_+>0$ and $V_-<0$ such that $\mathrm{Im}(f_B(u+\sqrt{-1}V_+))|_{m=-4}<U_B$ and $\mathrm{Im}(f_B(u+\sqrt{-1}V_-))|_{m=-3}<U_B$ for every $u\in I''(\bm{\alpha})$. 
Therefore, the contour defining $C_{-4}(\bm{\alpha})$ (resp. $C_{-3}(\bm{\alpha})$) can be chosen in a bounded region of the first (resp. the fourth) quadrant or on $I(\bm{\alpha})$, with $\mathrm{Im}(f_B(u+\sqrt{-1}v))<U_B$.
\end{proof}

\typeout{}
\bibliographystyle{plain}
\bibliography{Ref1}
\end{document}